\documentclass[11pt]{amsart}

\usepackage{amsfonts}
\usepackage{amsmath}
\usepackage{setspace}
\usepackage{amsthm}
\usepackage{amssymb}
\usepackage[all]{xy}
\usepackage{color}
\usepackage{mathrsfs}
\usepackage[plain]{fancyref}
\usepackage{fancyhdr}
\usepackage{colortbl}
\usepackage{tikz}
\usetikzlibrary{arrows.meta}
\usetikzlibrary{patterns}
\usepackage[margin=1in]{geometry}

\definecolor{job}{RGB}{200,65,0}
%\DeclareMathAlphabet{\mathpzc}{OT1}{pzc}{m}{it}

%reminders! \amalg is a small \coprod
\newcommand{\Z}{{\mathbb{Z}}}

\newcommand{\R}{{\mathbb{R}}}
\newcommand{\N}{{\mathbb{N}}}

\newcommand{\pushout}{\ar@{}[ul(0.35)]|-{\ulcorner}}
\newcommand{\pullback}{\ar@{}[dr(0.35)]|-{\lrcorner}}

\DeclareMathOperator{\Rep}{Rep}
\DeclareMathOperator{\rep}{rep}
\newcommand{\repAR}{\rep_k(A_\R)}
\newcommand{\RepAR}{\Rep_k(A_\R)}

\newcommand{\RepbAR}{\Rep_k^{\rm{b}}(A_\R)}

\newcommand{\ReppwfAR}{\Rep_k^{\rm{pwf}}(A_\R)}

%categories

%\newcommand{\Hom}{{\mathop{\text{Hom}}}}
\DeclareMathOperator{\Hom}{Hom}

\definecolor{purple}{RGB}{155,0,155}

\definecolor{gordanagreen}{RGB}{0,155,0}

\definecolor{readableyellow}{RGB}{200,200,0}

\newcommand{\im}{\mathop{\text{im}}}

\newcommand{\coker}{\mathop{\text{coker}}}
\newcommand{\supp}{\mathop{\text{supp}}}
\newcommand{\Ext}{\mathop{\text{Ext}}}
\newcommand{\End}{{\mathop{\text{End}}}}

\usepackage{hyperref}

\newtheorem{lemma}{Lemma}[subsection]
\newtheorem{proposition}[lemma]{Proposition}
\newtheorem{theorem}[lemma]{Theorem}
\newtheorem{cor}[lemma]{Corollary}

%%%
% The ``thm'' environment is only for the introduction. Use ``theorem'' for the main body of the document.
%%%
\newtheorem{thm}{Theorem}
 % "letter-numbered" theorems in the introduction

\theoremstyle{definition}
\newtheorem{definition}[lemma]{Definition}
\newtheorem{remark}[lemma]{Remark}
\newtheorem{example}[lemma]{Example}
\newtheorem{deff}[lemma]{Definition}
\newtheorem{notation}[lemma]{Notation}

\newtheorem{construction}[lemma]{Construction}

\title[Continuous Quivers of Type $A$ (I)]{Continuous Quivers of Type $A$ (I)\\Foundations}
\author[Igusa]{Kiyoshi Igusa}
\thanks{First author supported by the Simons Foundation}

\author[Rock]{Job D.~Rock}
\author[Todorov]{Gordana Todorov}
\date{\today}

\begin{document}
\maketitle

%\centerline{Corresponding author: Kiyoshi Igusa}

\begin{abstract}
We generalize type $A$ quivers to continuous type $A$ quivers and prove initial results about pointwise finite-dimensional (pwf) representations.
We classify the indecomosable pwf representations and provide a decomposition theorem, recovering results of Botnan and Crawley-Boevey \cite{Crawley-Boevey2015, BotnanCrawley-Boevey}.
%In particular, we generalize Crawley-Boevey's BarCode theorem to continuous quivers with alternating orientations: every pointwise finite-dimensional representation of a continuous type $A$ quiver is the direct sum of pointwise one-dimensional indecomposable whose supports are intervals. This recovers a result of \cite{BotnanCrawley-Boevey}.
We also classify the indecomposable pwf projective representations.
Finally, we prove that many of the properties of finite-dimensional type $A_n$ representations are present in finitely generated pwf representations.
This is the self-contained foundational part of a series of works to study a generalization of continuous clusters categories and their relationship to other type $A$ cluster structures.
\end{abstract}
\tableofcontents

\section*{Introduction}
\subsection*{History}
The indecomposable, finite-dimensional representations of type $A$ quivers and were classified by Gabriel in \cite{Gabriel1972}.
In particular this yielded an understanding of all finite-dimensional representations of type $A$ quivers.

Representations of quivers, and in particular type $A$ quivers, have been used extensively in persistent homology.
Persistent homology has recently been used to study fractal dimension \cite{PersistentDimension1, PersistentDimension2} and has been shown to be effective in recovering some signals in noise \cite{PersistentDimension3}.
Persistent homology has been applied to 3D shape classifications \cite{3Dshapes}, the study of plant root systems \cite{plantroots}, identification of breast cancer subtypes \cite{breastcancer}, and many other real world applications.

Representations of $\R$ and of the infinite zigzag are generalizations of type $A_n$ quiver representations
The first decomposition theorem for representations of $\R$ was proved by Crawley-Boevey in \cite{Crawley-Boevey2015}.
It states that every pointwise finite representation of $\R$ is a sum of indecomposable representations which are supported on intervals in $\R$. %The discrete version of this theorem is the basic result in persistent homology used in topological data analysis \cite{ZomorodianCarlsson}. The basic example is given by the Vietoris-Rips complex. (The earliest reference to using the complex this way that the authors could find are Carlsson, Ishkhanov, de Silva, and Zomorodian \cite{CIdSZ} and Chazal and Oudot \cite{CO}, both in 2008.)
%This is the functor which assigns to each finite subset $X$ of $\mathbb R^n$ and every nonnegative real number $r$ the homology of the set of all points which are a distance $\leq r$ of a point in $X$. For any $r\leq s$ we get a map in homology $H_*(r)\to H_*(s)$.
%So, we get a (graded) representation of $\R$. %(
%The support intervals of the indecomposable components are half-closed $[a,b)$ and the collection of these intervals is called the barcode of $X$.
For general representations of $\R$ the support intervals can be any interval: $(a,b),(a,b],[a,b)$ or $[a,b]$. % ])
Carlsson, de Silva, and Mozorov introduced zigzag persistent homology in \cite{CarlssondeSilvaMorozov2009} and Botnan proved a similar decomposition theorem to Crawley-Boevey's for infinite zigzag persistence in \cite{Botnan}.

Representations of quivers have also been used to understand cluster algebras via the construction of cluster categories.
Cluster algebras were introduced by Fomin and Zelevinsky in order to better understand scattering diagrams in particle physics \cite{ClusterAlgebrasI}.
Cluster algebras come equipped with a set of \emph{cluster variables}, sets of cluster variables called \emph{clusters}, and a \emph{mutation} process to move from one cluster to another.
Buan, Marsh, Reineke, Rietein, and the third author constructed a cluster category whose indecomposable objects correspond to cluster variables, maximally rigid sums of indecomposables correspond to clusters, and mutation of clusters was given by homological approximations.
In particular, a type $A_n$ cluster algebra can be studied via the cluster category built from finite-dimensional type $A_n$ representations.
The first and third author generalized this construction to a continuous version in \cite{IgusaTodorov2015}.
 
%For the continuous cluster category we reverse the orientation of $\RR$ and define a representation to be a vector space $V(x)$ for all $x\in \RR$ and linear map $V(x,y):V(x)\to V(y)$ for all $x\ge y$. 
\subsection*{Contributions}
We generalize type $A_n$ quivers to continuous quivers of type $A$ and study their representations. These generalize representations of the real line which are the basis for the continuous cluster category of \cite{IgusaTodorov2015}.
The present paper is a self-contained foundational paper with a focus on representation theoretic techniques.
Our goal is to study continuous quivers of type $A$, representations of such quivers, a generalization of the continuous cluster category, and what these continuous constructions tell us about the corresponding constructions for $A_n$.
%In 2017 the third author taught a 5-day course on the material in \cite{IgusaTodorov2015}.
%The second author asked, na\"ively, about using all the intervals and other orientations.

We first consider an alternating orientation on $\R$ given by a discrete subset $S=\{\cdots <s_k<s_{k+1}<\cdots\}\subset \R$ and a partial ordering $\preceq$ on $\mathbb R$ given by $x\prec y$ if $s_{2k}\le x<y\le s_{2k+1}$ for some $k$ or if $s_{2k-1}\leq y<x\leq s_{2k}$. The elements $x,y\in\R$ are not related if there is an element of $S$ in the open interval $(x,y)$. This is the continuous version of the zig-zag which is the quiver with vertex set $\mathbb Z$ with one arrow either $i\to i+1$ or $i+1\to i$ between successive integers (see \cite{ZomorodianCarlsson}). 

Let $A_\R$ denote the real line with alternating orientation given by a subset $S$. For any interval, i.e. connected subset, $I\subseteq\mathbb R$, we will construct a pointwise one dimensional representation $M_I$ with support equal to $I$, called interval indecomposable representations.~(See Definition \ref{def: MI}.)
The first theorem takes two representations of $A_\R$ known to be indecomposable (Proposition \ref{prop:sufficientProp}) and tells us when they are isomorphic.
We allow for any alternating orientation so long as $S$ does not have accumulation points and provide a theorem about indecomposable pwf representations and the decomposition of pwf representations: Theormes \ref{thm:intro interval} and \ref{thm:intro decomposition}, respectively.
In Section \ref{sec:relation to BC-B} we discus the relationship between Theorem \ref{thm:intro decomposition}, decomposition results in \cite{Botnan, BotnanCrawley-Boevey, Crawley-Boevey2015}, and the choice of method of proof in the present paper.

\begin{thm}[Theorem \ref{thm:iso-indecomps}]\label{thm:intro interval}
The representations $M_I$ are indecomposable and any pointwise one-dimensional indecomposable representation of $A_\R$ is isomorphic to $M_I$ for some interval $I\subseteq\R$.
Let $V$ and $V'$ be indecomposable representations of a continuous type $A$ quiver.
Then $V\cong V'$ if and only if $\supp V=\supp V'$.
\end{thm}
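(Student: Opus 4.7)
The plan is to handle the two assertions in the statement sequentially, using only what has been set up so far: the definition of $M_I$, the hypothesis that $S$ has no accumulation points, and Proposition \ref{prop:sufficientProp}.

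\textbf{Step 1: Indecomposability of $M_I$.} Each stalk $(M_I)_x$ is one-dimensional for $x\in I$ and zero otherwise, and by construction the nonzero structure maps of $M_I$ between consecutive stalks are identities. An endomorphism $\varphi$ of $M_I$ therefore consists of scalars $\lambda_x\in k$ for $x\in I$ with $\varphi_x=\lambda_x\cdot\id$; compatibility with the structure maps forces $\lambda_x=\lambda_y$ whenever $x$ and $y$ are linked through the zigzag restricted to $I$. Since $I$ is connected, $\lambda$ is constant, so $\End(M_I)\cong k$ has no nontrivial idempotents. This is exactly the sufficient condition provided by Proposition \ref{prop:sufficientProp}.

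\textbf{Step 2: Every pointwise one-dimensional indecomposable has the form $M_I$.} Let $V$ be pointwise one-dimensional and indecomposable. The first task is to show $\supp V$ is an interval: if $x<z<y$ with $V_x,V_y\neq 0$ but $V_z=0$, then the alternating zigzag structure at $z$ disconnects the poset, and the evident subrepresentations on $\supp V\cap(-\infty,z)$ and $\supp V\cap (z,\infty)$ give a nontrivial direct sum, contradicting indecomposability. Next, within $I=\supp V$, each structure map between nonzero stalks must be a nonzero isomorphism, for otherwise the edge on which it vanishes splits $V$ into a kernel and cokernel piece. Fixing a nonzero vector at some basepoint of $I$ and propagating along the zigzag produces an isomorphism $V\cong M_I$. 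The propagation is well-defined because $S$ has no accumulation points, so the zigzag within any bounded part of $I$ is finite; the remaining care needed is to verify that no scaling ambiguity arises at an endpoint of $I$ which lies in $S$, which is a routine check on a single edge.

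\textbf{Step 3: Support classifies indecomposables.} The forward direction is immediate, since an isomorphism $V\to V'$ restricts to an isomorphism on every stalk. For the reverse, Steps 1--2 show that any pointwise one-dimensional indecomposable with support $I$ is isomorphic to $M_I$; if both $V$ and $V'$ are pointwise one-dimensional with common support $I$, then $V\cong M_I\cong V'$.

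The main obstacle is the reverse direction when $V$ or $V'$ is not \emph{a priori} pointwise one-dimensional: the statement is phrased for any indecomposable representations, so to conclude one must know that every indecomposable representation of $A_\R$ (within the class under consideration) is pointwise one-dimensional. I would establish this separately, either as a preliminary lemma or in tandem with the decomposition Theorem \ref{thm:intro decomposition}, by showing that any higher-dimensional stalk yields an idempotent splitting via the one-dimensional-stalk reduction. Once this is granted, the chain $V\cong M_{\supp V}=M_{\supp V'}\cong V'$ completes the proof.
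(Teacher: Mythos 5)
Your proposal is correct and follows essentially the same route as the paper. Step 1 is exactly how the paper handles it: Definition \ref{def: MI} notes that $M_I$ visibly satisfies the three conditions of Proposition \ref{prop:sufficientProp}, hence is indecomposable. (One small quibble: your last sentence of Step 1 says the triviality of idempotents in $\End(M_I)$ ``is exactly the sufficient condition provided by Proposition \ref{prop:sufficientProp}''; the proposition's conditions are on the representation, not on $\End$, and its proof is a support-splitting argument rather than an endomorphism-ring argument. Your $\End(M_I)\cong k$ computation is fine but is a parallel argument, not what Prop.\ \ref{prop:sufficientProp} says.) Step 2 is the content of Lemma \ref{lem:V is schurian} and the proof of Theorem \ref{thm:iso-indecomps}: fix a nonzero scalar map at a base point $x_0$ and propagate it across the (locally finite, by discreteness of $S$) zigzag. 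Your preliminary observations that $\supp V$ must be connected and that structure maps between nonzero stalks must be isomorphisms are correct and in fact make explicit a point the paper treats somewhat implicitly (the corollary after Definition \ref{def: MI} and Lemma \ref{lem:V is schurian} quietly take these properties of an indecomposable pointwise one-dimensional representation for granted). Step 3's forward direction matches the paper's appeal to Proposition \ref{prop:isomorphic implies same support}, and you correctly identify the real subtlety: Theorem \ref{thm:iso-indecomps} is stated only for pointwise one-dimensional indecomposables, and extending the ``$\supp V=\supp V'\Rightarrow V\cong V'$'' direction to arbitrary (pointwise finite-dimensional) indecomposables requires knowing that every such indecomposable is pointwise one-dimensional, which is precisely what the decomposition Theorem \ref{thm:indecomposables} supplies. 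The paper's own Theorem A implicitly relies on that later result in the same way you indicate.
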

\begin{thm}[Theorem \ref{thm:indecomposables}]\label{thm:intro decomposition}
Let $V$ be a pointwise finite-dimensional representation of a continuous type $A$ quiver.
Then $V$ is a direct sum of interval indecomposables.
\end{thm}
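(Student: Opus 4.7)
My plan is to prove the decomposition in two stages: first, a local decomposition on each monotone arc between consecutive points of $S$ (which I shall call the local stage), followed by a gluing step at each vertex of $S$ (the gluing stage). Since $S$ is discrete in $\R$ with no accumulation points, $\R$ is partitioned by $S$ into a countable collection of arcs on each of which the orientation of $A_\R$ is totally ordered. The restriction $V|_{[s_k,s_{k+1}]}$ is then a pwf persistence module on a real interval, and for the local stage I would show it decomposes into interval indecomposables. This is essentially the totally-ordered case proved by Crawley-Boevey, which one can invoke directly or reprove in a self-contained way via a functorial filtration: for each $t$ one defines the subspaces of $V_t$ ``born at'' $t$ from either side and ``dying at'' $t$ from either side as intersections and unions of images and kernels of the structure maps $V_s \to V_t$, checks that these filtrations are well-behaved thanks to pwf, and reads off the multiset of interval summands.

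Each local decomposition of $V|_{[s_k,s_{k+1}]}$ equips the finite-dimensional space $V_{s_k}$ with a direct sum decomposition into one-dimensional lines, indexed by those summands whose support contains $s_k$ (using that interval indecomposables are pointwise one-dimensional, by Theorem \ref{thm:intro interval}). So at each $s_k$ we obtain two decompositions of $V_{s_k}$ into one-dimensional subspaces, one from the left arc and one from the right. I would glue by choosing a bijection between these two families of lines, interpreting a matched pair as a single interval of $V$ crossing $s_k$ and an unmatched line as an interval terminating at $s_k$. Pointwise finiteness guarantees that only finitely many intervals meet $s_k$ from each side, so the matching at each vertex is a finite-dimensional linear algebra problem, and the discreteness of $S$ lets these matchings be made by an inductive sweep outward from a chosen base vertex.

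The main obstacle is compatibility: two decompositions of a finite-dimensional space into one-dimensional subspaces need not share any common lines, so a naive matching may not coincide with either local decomposition as given. The resolution relies on the flexibility of interval decompositions on a monotone arc: if a local decomposition of $V|_{[s_k,s_{k+1}]}$ is given and $L \subset V_{s_k}$ is a one-dimensional subspace contained in the span of those summands whose support contains $s_k$, then the decomposition can be modified so that $L$ appears as the $s_k$-component of one of those interval summands, without changing the multiset of support intervals. I would verify this modification by an explicit change of basis inside the finite-dimensional ``through'' subspace of $V_{s_k}$. With this flexibility in hand, at each $s_k$ we reindex the local decomposition on the right arc so that its lines at $s_k$ agree with those prescribed by the (already-matched) decomposition on the left arc, completing the inductive gluing into a global decomposition of $V$ as a direct sum of interval indecomposables.
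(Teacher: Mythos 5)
Your overall architecture — decompose on each monotone arc, then glue at sinks and sources — is the same as the paper's, and your ``modification'' lemma is essentially Lemma \ref{lem:Lemma X}(5). However, there are two genuine gaps.

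The first is in the gluing step. You propose to fix the left decomposition and then ``reindex the local decomposition on the right arc so that its lines at $s_k$ agree with those prescribed by the left arc.'' This cannot always be done. The lines at $s_k$ coming from a decomposition of $V|_{[s_{k-1},s_k]}$ are constrained only by the \emph{left} image filtration of $V(s_k)$, whereas the lines of any decomposition of $V|_{[s_k,s_{k+1}]}$ must respect the \emph{right} image filtration. A basis respecting one filtration need not respect the other; for instance if the right filtration is $ke_1\subset k^2$ then the left lines could be $k(e_1+e_2)$ and $k(e_1-e_2)$, and no reindexing of the right decomposition will produce these lines. Your single-line modification claim is correct as stated, but it does not yield the simultaneous multi-line matching you need: the multiset of right-filtration positions occupied by the left lines need not equal the multiset dictated by the right decomposition's support intervals. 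What is actually required is a common refinement — a single decomposition of $V(s_k)$ into lines that respects \emph{both} filtrations — after which \emph{both} sides are modified. This is the content of Lemma \ref{lem:Y} in the paper, which is proved via an $A_{n+m-1}$ quiver representation, combined with Lemma \ref{lem:Lemma X}(5) to lift the common decomposition to each arc.

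The second gap is a consequence of the first. Once each gluing requires modifying the decompositions on both adjacent arcs, the ``inductive sweep outward from a base vertex'' no longer produces a nested sequence of decompositions: matching at $s_{k+1}$ can force a change of basis at $s_k$ that breaks a matching already established there. This is exactly why the paper's Lemma \ref{lem:lem C} only gives, for each finite $[s_n,s_m]$, \emph{some} decomposition rather than one compatible with the decomposition on a smaller interval, and why passing to all of $\R$ when $S$ is unbounded requires the additional machinery of Construction \ref{con:V0infinity}, Proposition \ref{prop:V1n is split}, and Lemmas \ref{lem:decomposition extension} and \ref{lem:split subrepresentation}: one first splits off the summands with bounded support, and then shows that for the residual representation the structure maps are eventually all mono or all epi, so the remaining decomposition does stabilize and can be extended to infinity. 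Your proposal would need an argument of this kind (or an appeal to Zorn's lemma / a direct compactness argument) to turn the family of finite-restriction decompositions into an actual decomposition of $V$.
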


The proofs of Theorems \ref{thm:intro interval} and \ref{thm:intro decomposition} use Theorems \ref{thm:point projective} and \ref{thm:characterization of one sided projectives}, which classify the interval projective representations in the category of pointwise finite-dimesional representations, denoted $\ReppwfAR$.
In particular we provide this characterization before we prove our decomposition theorem.
Combined with Theorem \ref{thm:intro decomposition}, one obtains a complete description of indecomposable projective objects in $\ReppwfAR$ as Theorem \ref{thm:intro projectives}.
\begin{thm}[Theorems \ref{thm:point projective}, \ref{thm:characterization of one sided projectives}, and \ref{thm:GeneralizedBarCode} and Remark \ref{rem:indecomposableprojectives}]\label{thm:intro projectives}
Any indecomposable projective object in $\ReppwfAR$ is isomorphic to one of the following, where $a$ may be $\pm\infty$.
\begin{enumerate}
\item $P_a$ (for $-\infty < a <+\infty$) given by
\begin{align*}
P_a(x) &= \left\{\begin{array}{ll} k & x\preceq a \\ 0 & \text{otherwise} \end{array}\right. & 
P_a(x,y) &= \left\{\begin{array}{ll} 1_k & y\preceq x \preceq a \\ 0 & \text{otherwise}. \end{array}\right.
\end{align*} %(((
\item $P_{a)}$ given by
\begin{align*} P_{a)} &= \left\{ \begin{array}{ll}k & x\preceq a, x<a \\ 0 & \text{otherwise} \end{array}\right. & 
P_{a)}(x,y) &= \left\{\begin{array}{ll} 1_k & y\preceq x \preceq a, y\leq x < a \\ 0 & \text{otherwise}. \end{array}\right. \end{align*}
\item $P_{(a}$ given by
\begin{align*} P_{(a} &= \left\{ \begin{array}{ll}k & x\preceq a, a<x \\ 0 & \text{otherwise} \end{array}\right. & 
P_{(a}(x,y) &= \left\{\begin{array}{ll} 1_k & y\preceq x \preceq a, a<x\leq y \\ 0 & \text{otherwise}. \end{array}\right.\end{align*}
\end{enumerate} %)))

\end{thm}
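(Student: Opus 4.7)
The plan is to combine a sufficiency argument with a necessity argument, using the decomposition Theorem~\ref{thm:intro decomposition} as a bridge. First, I would use the decomposition theorem to reduce the classification of indecomposable projectives to the problem of identifying which interval indecomposables $M_I$ are projective in $\ReppwfAR$. Concretely, if $V \in \ReppwfAR$ is indecomposable projective, Theorem~\ref{thm:intro decomposition} writes $V$ as a direct sum $\bigoplus_\alpha M_{I_\alpha}$, and indecomposability together with the support-based uniqueness built into Theorem~\ref{thm:intro interval} forces $V \cong M_I$ for a single interval $I$. So the classification reduces to pinning down which intervals $I\subseteq\R$ yield a projective $M_I$.

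For the sufficiency half, each of the listed $P_a$, $P_{a)}$, and $P_{(a}$ is visibly an interval indecomposable $M_I$, with $I$ read off from the displayed support formula; indecomposability then follows from Theorem~\ref{thm:intro interval}. Projectivity of $P_a$ for $-\infty<a<+\infty$ is exactly the content of Theorem~\ref{thm:point projective}, and projectivity of the one-sided versions $P_{a)}$ and $P_{(a}$, including the endpoints $a=\pm\infty$, is Theorem~\ref{thm:characterization of one sided projectives}. So each listed object is indeed an indecomposable projective in $\ReppwfAR$.

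For the necessity half, suppose $M_I$ is projective but $I$ matches none of the three listed shapes. Then at one endpoint, $I$ is cut off inside an oriented segment of $A_\R$ in a way not aligned with a source, a sink, or an infinite end. At such a forbidden endpoint one enlarges $I$ to a neighboring interval $J\supsetneq I$ and produces a surjection $M_J \twoheadrightarrow M_I$ in $\ReppwfAR$ whose kernel is a nonzero interval indecomposable; the identity $M_I \to M_I$ then fails to lift along this surjection, contradicting projectivity. Packaging this into a complete classification is the content of Remark~\ref{rem:indecomposableprojectives}. The main obstacle is precisely this necessity step: one must exhaust the topological types of endpoint (open versus closed, at a source, at a sink, at an interior point of a segment, or at $\pm\infty$) and in each forbidden case exhibit an explicit pointwise finite-dimensional obstruction. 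Once that case analysis is complete, the three families $P_a$, $P_{a)}$, $P_{(a}$ are exactly the remaining possibilities.
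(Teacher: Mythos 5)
Your overall packaging follows the paper's own route: Theorem \ref{thm:GeneralizedBarCode} reduces any indecomposable projective to a single interval indecomposable $M_I$, sufficiency is Theorems \ref{thm:point projective} and \ref{thm:characterization of one sided projectives}, and the classification is then assembled in Remark \ref{rem:indecomposableprojectives}. However, your necessity step quietly re-derives (a special case of) Theorem \ref{thm:characterization of one sided projectives}(1) rather than citing it, and as written it has two gaps. First, the direction of the enlargement matters: the restriction ``map'' $M_J\to M_I$ is a morphism of representations only when $J$ extends $I$ \emph{downstream} in $\preceq$ (toward the relevant sink); extending in the other direction gives an inclusion $M_I\hookrightarrow M_J$, not a surjection, and the commutativity squares fail if you try to force a surjection. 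You need to identify the defect as ``some segment $[s_n,s_{n+1}]$ meeting $I$ does not have its sink in $I$'' --- not, as you phrase it, an endpoint misaligned with a sink/source/$\pm\infty$, since the listed $P_a$, $P_{a)}$, $P_{(a}$ all have one endpoint at an arbitrary $a\in\R$. Second, the claim that the identity on $M_I$ ``fails to lift'' needs an argument: a lift would be a section of $M_J\twoheadrightarrow M_I$, so $M_I$ would be a direct summand of $M_J$; since $M_J$ is indecomposable (Proposition \ref{prop:sufficientProp}) and nonzero, this forces $M_I\cong M_J$, contradicting $\supp M_I\neq \supp M_J$ via Theorem \ref{thm:iso-indecomps}. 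This is essentially the non-lifting argument the paper gives inside the proof of Theorem \ref{thm:characterization of one sided projectives}(1) (with $\widetilde Q$ playing the role of your $M_J$), but there it is stated for arbitrary pwf representations, not only intervals; your version is cleaner precisely because you already reduced to intervals. Finally, attributing the obstruction to Remark \ref{rem:indecomposableprojectives} is off: the Remark only combines Theorems \ref{thm:characterization of one sided projectives} and \ref{thm:GeneralizedBarCode}; the obstruction lives in the proof of the former.
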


In Section \ref{sec:little rep} we prove properties about the category of finitely generated representations (Definition \ref{def:finitelygeneratedreps}) over any continuous quiver of type $A$, denoted $\repAR$.
In the $A_n$ case, finitely generated and finite-dimensional representations coincide.
Since only finite sums of simple representations of a continuous quiver are finite-dimensional, we instead consider finitely generated representations in $\ReppwfAR$.
Theorem \ref{thm:intro little rep} highlights some similarities and differences between $\rep(A_n)$ and $\repAR$.
Some of the properties extended to pointwise finite-dimensional representations and bounded-dimensional representations (Definition \ref{def:smallerreps}), denoted $\RepbAR$.
\begin{thm}[Theorem \ref{thm:little rep}]\label{thm:intro little rep}
Let $A_\R$ be a continuous quiver of type $A$.
Then the following hold.
\begin{enumerate}
\item For indecomposable representations $M_I$ and $M_J$ in $\ReppwfAR$, $\RepbAR$, or $\repAR$, we have $\Hom(M_I,M_J)\cong k$ or $\Hom(M_I,M_J)=0$ (Proposition \ref{prop:homiskor0}).
\item Every morphism $f:V\to W$ in $\ReppwfAR$, $\RepbAR$, or $\repAR$ has a kernel, a cokernel, and coinciding image and coimage in that category. (Lemma \ref{lem:abelian})%Every morphism $f:M_I\to M_J$ of indecomposables in $\ReppwfAR$, $\RepbAR$, or $\repAR$ has a kernel, a cokernel, and coinciding image and coimage (Lemma \ref{lem:abelian}).
\item The category $\repAR$ Krull-Schmidt, \emph{but not} artinian (Lemma \ref{lem:krull-schmidt}, Proposition \ref{prop:not artinian}).
\item The global dimension of $\repAR$ is 1 (Proposition \ref{prop:projectiveresolution}).
\item The $\Ext$ space of two indecomposables $M_I$ and $M_J$ in $\ReppwfAR$, $\RepbAR$, or $\repAR$ is either isomorphic to $k$ or is 0 (Proposition \ref{prop:smallext}).
\item While some Auslander--Reiten sequences exist (Proposition \ref{prop:ARexistence}), some indecomposables have \emph{neither} a left \emph{nor} a right Auslander--Reiten sequence (Proposition \ref{prop:no AR sequences}).
\end{enumerate}
\end{thm}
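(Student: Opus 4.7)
The statement bundles six properties together, and the natural strategy is to reduce each to the two structure theorems already in hand: the interval decomposition (Theorem \ref{thm:intro decomposition}) and the classification of indecomposable projectives (Theorem \ref{thm:intro projectives}). Throughout I would work with the interval indecomposables $M_I$, verify each claim for these, and extend by additivity.

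For (1), I would show that any component $f_x \colon k \to k$ of a morphism $M_I \to M_J$ is determined by a single scalar: compatibility with the structure maps of $M_I$ (identities along the alternating orientation on $I$) and of $M_J$ (identities on $J$) forces all nonzero components to carry the same scalar, so $\Hom(M_I, M_J)$ embeds in $k$. Part (2) then follows because a morphism $f \colon V \to W$ between pwf (resp.\ bounded, finitely generated) representations decomposes, using Theorem \ref{thm:intro decomposition}, into a matrix of scalars between interval summands; one computes kernels and cokernels of such a matrix intervalwise, and the outputs are again direct sums of subintervals, which remain in the relevant subcategory $\ReppwfAR$, $\RepbAR$, or $\repAR$.

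For (3), Krull--Schmidt is immediate from Theorem \ref{thm:intro decomposition} together with the fact that $\End(M_I) = k$ is local. To show $\repAR$ is not artinian, I would exhibit an explicit infinite strictly descending chain: take a half-open interval $I$ along a monotone segment of the orientation and a sequence of points $x_1, x_2, \dots$ strictly decreasing inside $I$, producing subrepresentations whose supports form a properly descending chain of subintervals. For (4), using Theorem \ref{thm:intro projectives} I would construct an explicit two-term projective resolution of each $M_I$ by writing $M_I$ as the cokernel of an inclusion $P' \hookrightarrow P$ between two projectives drawn from the list, with $P$ the projective cover of $M_I$ and $P'$ the kernel; then verify directly from the list that $P'$ is again projective. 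Part (5) is then an immediate consequence: from $0 \to P' \to P \to M_I \to 0$ one obtains $\Ext^1(M_I, M_J) = \coker(\Hom(P, M_J) \to \Hom(P', M_J))$, and by (1) each Hom space is at most one-dimensional, so the cokernel is at most one-dimensional.

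The main obstacle will be (6). For some intervals $I$ one can produce an AR sequence by constructing an explicit translate $\tau M_I$, obtained by moving endpoints across the nearest elements of the discrete set $S$ defining the orientation, and then verifying the almost-split property by direct inspection of morphism spaces using (1). For other intervals, typically those whose endpoints are not of the form $s_k$, the continuous nature of $\R$ means there is no immediately adjacent interval available to serve as the requisite middle term. The hard part will be systematically ruling out every possible middle term: any candidate decomposes by Theorem \ref{thm:intro decomposition} into interval modules, so the argument must show that no finite linear combination of morphisms from such modules can be almost-split. I expect this case analysis at the endpoints, and not the abelian-category or homological claims, to be where the real work lies.
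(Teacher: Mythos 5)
Your overall strategy — reduce everything to the decomposition theorem and the classification of projectives — matches the paper's. Items (1), (4), and (6) are sketched along essentially the same lines the paper takes. But there are three places where you are asserting as easy something that requires genuine work, and where the paper's proofs are doing more than your sketch acknowledges.

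First, item (2) in $\repAR$. You propose to compute kernels and cokernels intervalwise and observe the outputs are direct sums of intervals; that shows they land in $\ReppwfAR$, but it does not show they are \emph{finitely generated}. The paper's Lemma \ref{lem:abelian} handles this by a separate argument: given a surjection $\bigoplus Q_i \twoheadrightarrow V$ by finitely many projectives, the composite kernel $\ker(f\circ g)$ is a subobject of a projective, hence projective (since $\ReppwfAR$ is hereditary), hence a finitely generated projective, and this is used to conclude $\ker f$ is finitely generated. Your sketch never addresses why the kernel of a map of finitely generated objects is finitely generated; in a non-noetherian setting this is not automatic.

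Second, item (3). Krull--Schmidt is not "immediate" from the decomposition theorem plus $\End(M_I)=k$: Theorem \ref{thm:intro decomposition} yields a direct sum decomposition that could \emph{a priori} be infinite. Lemma \ref{lem:krull-schmidt} does the real work of proving that a finitely generated representation decomposes into \emph{finitely many} interval indecomposables, via a counting argument on sinks and sources in the support and a boundedness argument near the free endpoints $a$ of the generators $P_{(a}$, $P_{a)}$. That finiteness is the content, and you skip it.

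Third, item (5). Your claim that $\Ext^1(M_I,M_J)=\coker(\Hom(P,M_J)\to\Hom(P',M_J))$ is at most one-dimensional "because each Hom space is at most one-dimensional" is wrong as stated: $P=P_0(M_I)$ and $P'=P_1(M_I)$ are generally \emph{decomposable} (one projective summand for each source inside $I$ plus contributions from the two endpoints), so $\Hom(P,M_J)$ and $\Hom(P',M_J)$ can be $k^m$ and $k^n$ with $m,n>1$. The paper's Proposition \ref{prop:smallext} does a concrete matrix analysis of the induced map $k^m\to k^n$ (it has a staircase shape determined by how consecutive sources are related), and from that shape deduces the map is injective or surjective with $|m-n|\leq 1$. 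Item (1) alone does not get you there.

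The remaining items ((1), (4), (6)) are correctly identified and your proof sketches for them agree in spirit with the paper's: (1) propagate a scalar along the orientation using discreteness of $S$; (4) build the explicit two-term resolution from the projective list; (6) exhibit the AR sequence for intervals not containing a sink/source (the paper's Proposition \ref{prop:ARexistence}) and rule out AR sequences ending in $M_{\{a\}}$ because every map into $M_{\{a\}}$ factors through $M_{[x,a]}$ for $x$ arbitrarily close to $a$ (Proposition \ref{prop:no AR sequences}).
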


%\subsection*{Future Work}
%This is the beginning of a longer work in which a generalized version of the continuous cluster category and a continuous generalization of mutation are studied. The key differences in the continuous cluster categories are: (1) we are considering alternating orientation of $\mathbb R$ and (2) we have four types of intervals $|a,b|$, i.e. $(a,b)$, $(a,b]$, $[a,b)$ and $[a,b]$, as opposed to only the half-closed intervals $(a,b]$ considered in \cite{IgusaTodorov2015}.

\subsection*{Acknowledgements}
The authors would like to thank Ralf Schiffler for organizing the Cluster Algebra School at the University of Connecticut and Shijie Zhu for helpful discussions.
They would also like to thank Magnus B.~Botnan, Bill Crawley-Boevey, Bernhard Keller, and Francesco Sala for references to related work.
The second author would also like to thank Eric Hanson for helpful discussions.
%\question{Anyone else?}

\section{Continuous Quivers of Type $A$}

\noindent We let $k$ denote a field for the entirety of this paper.

\subsection{Quiver of Continuous Type $A$: $A_\R$}
The goal of this section is to generalize the definition of type $A$ quivers to a continuous setting.
The set $\R$ will serve as the vertices in our quiver.
We will choose a set of sinks and sources, which will induce the orientation on the continuous quiver by indicating which vertices have paths to which others.
The picture below gives an intuitive idea of the result of choosing a continuous type $A$ quiver and the definition follows.
\begin{displaymath}\begin{tikzpicture}[scale=1.5]
\draw[thick, dotted] (-3.5,0.4) -- (-3,-0.1);
\draw[thick, dotted] (3,1.7) -- (3.5,1.2);
\draw[thick] (-3,-0.1) -- (-2.4,-0.7) -- (-1,0.7) -- (-0.3,0) -- (0.7,1) -- (1.2,0.5) -- (2.7,2) -- (3,1.7);
\draw[fill=black] (-2.4,-0.7) circle[radius=0.4mm];
\draw[fill=black] (-1,0.7) circle[radius=0.4mm];
\draw[fill=black] (-0.3,0) circle[radius=0.4mm];
\draw[fill=black] (0.7,1) circle[radius=0.4mm];
\draw[fill=black] (1.2,0.5) circle[radius=0.4mm];
\draw[fill=black] (2.7,2) circle[radius=0.4mm];
\draw[thick, ->] (-3,-0.1) -- (-2.7,-0.4);
\draw[thick, ->] (-1,0.7) -- (-1.75,-0.05);
\draw[thick, ->] (-1,0.7) -- (-0.6,0.3);
\draw[thick, ->] (0.7,1) -- (0.15,0.45);
\draw[thick, ->] (0.7,1) -- (1,0.7);
\draw[thick, ->] (2.7,2) -- (1.9,1.2);
\draw[thick, ->] (2.7,2) -- (2.9,1.8);
\draw(-0.3,0) node[anchor=north] {$s_{2n}$};
\draw(0.7,1) node[anchor=south] {$s_{2n+1}$};
\end{tikzpicture}\end{displaymath}

\begin{definition}\label{def:AR}A \underline{quiver of continuous type $A$}, denoted by $A_\R$, is a triple $(\R,S,\preceq)$, where:
\begin{enumerate}
\item 
\begin{enumerate}
\item$S\subset \R$ is a discrete subset, possibly empty, with no accumulation points.
\item Order on $S\cup\{\pm\infty\}$ is induced by the order of $\R$, and $-\infty<s<+\infty$ for $\forall s\in S$.
\item Elements of $S\cup\{\pm\infty\}$ are indexed by a subset of $\Z\cup\{\pm\infty\}$ so that $s_n$ denotes the element of 
$S\cup\{\pm\infty\}$ with index $n$. The indexing must adhere to the following two conditions:
\begin{itemize}
\item[i1] There exists $s_0\in S\cup\{\pm\infty\}$.
\item[i2] If $m\leq n\in\Z\cup\{\pm\infty\}$ and $s_m,s_n\in S\cup\{\pm\infty\}$ then for all $p\in\Z\cup\{\pm\infty\}$ such that $m\leq p \leq n$ the element $s_p$ is in $S\cup\{\pm\infty\}$.
\end{itemize}
\end{enumerate}
\item New partial order $\preceq$ on $\R$, which we call  the \underline{orientation} of $A_\R$, is defined as:
\begin{itemize}
\item[p1\ ] The $\preceq$ order between consecutive elements of $S\cup\{\pm\infty\}$ does not change.
\item[p2\ ] Order reverses at each element of $S$.
\item[p3\ ] If $n$ is even $s_n$ is a sink.
\item[p3'] If $n$ is odd $s_n$ is a source.
\end{itemize}
\end{enumerate}
\end{definition}

\begin{definition}\label{def:ARalgebra}
Let $A_\R=(\R,S, \preceq)$ be a quiver of continuous type $A$. Then 
the associated \underline{continuous path algebra} $kA_\R$ is the associative algebra over $k$ (without unity) whose basis consists of pairs $(x,y)$, where $y\preceq x$.
Multiplication on the pairs is given by
\begin{displaymath} (w,x)(y,z) = \left\{ \begin{array}{ll}(w,z) & x=y \\ 0 & x\neq y. \end{array} \right.\end{displaymath}
\end{definition}

\begin{remark}
The indexing requirements on $S$ have the following immediate consequences.
\begin{itemize}
\item If $S$ is empty then either (i) $s_{-1}=-\infty$ and $s_0=+\infty$ or (ii) $s_0=-\infty$ and $s_1=+\infty$.
\item If $S$ is unbounded above (below) then $+\infty=s_{+\infty}$ ($-\infty=s_{-\infty}$).
\item If $S$ is bounded above (below) then there is no $s_{+\infty}$ ($s_{-\infty}$) in $\bar{S}$.
\end{itemize}

The rules for the partial order have the following consequences.
If $x<y\in\R$ and there is some $s_n\in S$ such that $x<s_n<y$ then $x \npreceq y$ and $y\npreceq x$.
If $x\leq y\in\R$ and there exists $s_n,s_{n+1}\in \bar{S}$ such that $s_n\leq x\leq y\leq s_{n+1}$ then:
\begin{align*}
x\preceq y & \text{ if } n \text{ is even} \\
y \preceq x & \text{ if } n \text{ is odd}.
\end{align*}
\end{remark}

\begin{example}\label{xmp:basicorientations}
We provide four examples of $S$ and the induced partial order $\preceq$ on $\R$.
\begin{enumerate}
\item A finite example: $S=\{\frac{1}{2},\pi\}$, $\bar{S}=\{-\infty,\frac{1}{2},\pi,+\infty\}$, $s_{-2}=-\infty$, $s_{-1}=\frac{1}{2}$, $s_0=\pi$, and $s_1=+\infty$.
\begin{displaymath}\begin{tikzpicture}[scale=2]
\draw[black, dotted, thick] (-2,0) -- (-1.5,0);
\draw[black, dotted, thick] (1.5,0) -- (2,0);
\draw[black,thick] (-1.5,0) -- (1.5,0);
\draw[black,thick,->] (-.5,0) -- (0,0);
\draw[black,thick,->] (-.5,0) -- (-1,0);
\draw[black,thick,->] (1.5,0) -- (1,0);
\filldraw[fill=black,draw=black] (-.5,0) circle[radius=.3mm];
\filldraw[fill=black,draw=black] (.5,0) circle[radius=.3mm];
\draw (-.5,0) node[anchor=north] {${\frac{1}{2}}$};
\draw (.5,0) node[anchor=north] {${\pi}$};
\end{tikzpicture}\end{displaymath}
\item A ``half'' unbounded example: $S=\{2n: n\in\N\}$, $s_{-1}=-\infty$, $s_n=2n$ when $n\geq 0$, and $s_{+\infty}=+\infty$.
\begin{displaymath}\begin{tikzpicture}
\draw[black, dotted, thick] (-5,0) -- (-4,0);
\draw[black, dotted, thick] (6,0) -- (7,0);
\draw[black, thick] (-4,0) -- (6,0);
\draw[black, thick,->] (-4,0) -- (-2,0);
\foreach \x in {0,1,2}
{
	\draw[black, thick, ->] (2*\x+1,0) -- (2*\x+0.5,0);
	\draw[black,thick, ->] (2*\x+1,0) -- (2*\x + 1.5,0);
	\filldraw[fill=black,draw=black] (2*\x,0) circle [radius=.6mm];
	\filldraw[fill=black,draw=black] (2*\x+1,0) circle [radius=.6mm];
}
\draw (0,0) node[anchor=north] {{0}};
\draw (1,0) node[anchor=north] {{2}};
\draw (2,0) node[anchor=north] {{4}};
\draw (3,0) node[anchor=north] {{6}};
\draw (4,0) node[anchor=north] {{8}};
\draw (5,0) node[anchor=north] {{10}};
\end{tikzpicture}\end{displaymath}
\item An unbounded example: $S=\{\frac{n}{2}: n\in\Z\}$, $s_{-\infty}=-\infty$, $s_n=\frac{n}{2}$, and $s_{+\infty}=+\infty$.
\begin{displaymath}\begin{tikzpicture}
\draw[black, dotted, thick] (-7,0) -- (-6,0);
\draw[black, dotted, thick] (6,0) -- (7,0);
\draw[black, thick] (-6,0) -- (6,0);
\draw[black, thick, ->] (-5,0) -- (-5.5,0);
\draw[black,thick, ->] (-5,0) -- (-4.5,0);
\filldraw[fill=black,draw=black] (-5,0) circle [radius=.6mm];
\foreach \x in {-2,...,2}
{
	\draw[black, thick, ->] (2*\x+1,0) -- (2*\x+0.5,0);
	\draw[black,thick, ->] (2*\x+1,0) -- (2*\x + 1.5,0);
	\filldraw[fill=black,draw=black] (2*\x,0) circle [radius=.6mm];
	\filldraw[fill=black,draw=black] (2*\x+1,0) circle [radius=.6mm];
}
\draw (-5,0) node[anchor=north] {{$\frac{-5}{2}$}};
\draw (-4,0) node[anchor=north] {{-2}};
\draw (-3,0) node[anchor=north] {{$\frac{-3}{2}$}};
\draw (-2,0) node[anchor=north] {{-1}};
\draw (-1,0) node[anchor=north] {{$\frac{-1}{2}$}};
\draw (0,0) node[anchor=north] {{0}};
\draw (1,0) node[anchor=north] {{$\frac{1}{2}$}};
\draw (2,0) node[anchor=north] {{1}};
\draw (3,0) node[anchor=north] {{$\frac{3}{2}$}};
\draw (4,0) node[anchor=north] {{2}};
\draw (5,0) node[anchor=north] {{$\frac{5}{2}$}};
\end{tikzpicture}\end{displaymath}
\item One of the two $S=\emptyset$ possibilities: $S=\emptyset$, $s_0=-\infty$, and $s_1=+\infty$. This causes $\preceq$ to coincide with $\leq$.
\end{enumerate}
\end{example}

\begin{remark}
It is important to note that the choice which element of $S$ becomes $s_0$ determines the entire indexing of $S$ and thus the entire partial order $\preceq$.
Additionally, given a set $\bar{S}$ there are exactly two partial orders $\preceq$ possible no matter which element of $S$ is chosen to be $s_0$.
The two partial orders are opposites of each other.
\end{remark}

\begin{remark}\label{rem:ARconvention}
From now on, whenever we refer to $A_\R$, we are implicitly assuming some $S$ with indexing and $\preceq$ have been set.
By `the straight descending orientation' we mean the one where $S=\emptyset$, $s_0=-\infty$, and $s_1=+\infty$ as in Example \ref{xmp:basicorientations}. This is the case where $\preceq$ coincides with $\leq$.
\end{remark}

\subsection{Representations of $A_\R$: $\RepAR$}
\begin{deff}\label{def:representation}
A \underline{representation} $V$ of $A_\R$ is a module over the path algebra $kA_\R$.
Explicitly, one assigns to each real number $x$ a vector space $V(x)$ and to each pair $(x,y)$, where $y\preceq x$, a linear transformation $V(x,y):V(x)\to V(y)$ such that $V(y,z)\circ V(x,y)=V(x,z)$ whenever such a composition is defined.
The \underline{support} of a representation $V$ is the set of all $x\in\R$ such that $V(x)\neq 0$.
We denote the support of a representation $V$ by $\supp V$.

A \underline{simple representation at $x$} is a representation $V$ such that $V(x)\cong k$ and if $y\neq x$ then $V(y)=0$.
The linear map $V(x,x)$ is the identity and $V(y,z)=0$ if $y\neq x$ or $z\neq x$.
\end{deff}
\begin{deff}\label{def:morphism}
A \underline{morphism} $f:V\to W$ of representations of $A_\R$ is a morphism of $kA_\R$ modules.
Explicitly, it is a collection of linear maps $f(x):V(x)\to W(x)$, for all $x\in\R$, making the following squares commute for each pair $x,y\in\R$ where $y\preceq x$:
\begin{displaymath}
\xymatrix{ V(x) \ar[r]^-{V(x,y)} \ar[d]_-{f(x)} & V(y) \ar[d]^-{f(y)} \\ W(x) \ar[r]_-{W(x,y)} & W(y). }
\end{displaymath}
\end{deff}

Since we're working with modules over an associative algebra, and associative algebras are in particular rings, the category of $k$-representations of $A_\R$, denoted $\RepAR$, is abelian.

Propositions \ref{prop:isomorphisms} and \ref{prop:isomorphic implies same support} can be proved almost the exact same way as they would for discrete quivers of type $A$.
\begin{proposition}\label{prop:isomorphisms}
A morphism of representations $f:V\to W$ in $\RepAR$ is an isomorphism if and only if $f(x)$ is an isomorphism for each $x\in \R$.\end{proposition}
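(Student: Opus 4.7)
The plan is to prove both directions by a straightforward pointwise argument, exploiting the fact that composition and identities in $\RepAR$ are computed pointwise because $\RepAR$ is just the module category over $kA_\R$.

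For the easy direction, assume $f:V\to W$ is an isomorphism with inverse $g:W\to V$. Since $fg = 1_W$ and $gf = 1_V$ as morphisms of representations, evaluating at any $x\in\R$ yields $f(x)g(x) = 1_{W(x)}$ and $g(x)f(x) = 1_{V(x)}$, so each $f(x)$ is an isomorphism of $k$-vector spaces.

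For the substantive direction, assume each $f(x)$ is an isomorphism. I would define a candidate inverse $g:W\to V$ by setting $g(x) := f(x)^{-1}:W(x)\to V(x)$ for every $x\in\R$. The only thing to check is that this family of linear maps constitutes a morphism in $\RepAR$, i.e.\ that for every pair $y\preceq x$ one has $V(x,y)\circ g(x) = g(y)\circ W(x,y)$. This follows by manipulating the commutative square witnessing that $f$ is a morphism: starting from $W(x,y)\circ f(x) = f(y)\circ V(x,y)$, pre-compose with $g(x)=f(x)^{-1}$ and post-compose with $g(y)=f(y)^{-1}$ to obtain the desired identity. Once $g$ is known to be a morphism, the identities $gf=1_V$ and $fg=1_W$ hold pointwise by construction, hence in $\RepAR$.

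There is no real obstacle here; the argument is essentially the standard verification that isomorphisms in any functor category (or equivalently, in any module category) can be detected pointwise. The only mild point worth noting is that one must separately verify the square-commutation condition for $g$ rather than simply declaring $g(x):=f(x)^{-1}$ and stopping, since a priori a pointwise family of isomorphisms need not assemble into a morphism of representations — but the compatibility with transition maps is forced by that of $f$.
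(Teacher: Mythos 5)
Your proof is correct and is exactly the standard pointwise argument that the paper has in mind: the authors explicitly omit the proof, remarking only that Propositions \ref{prop:isomorphisms} and \ref{prop:isomorphic implies same support} ``can be proved almost the exact same way as they would for discrete quivers of type $A$.'' You have supplied that routine verification, including the one step worth making explicit (that the pointwise inverses $g(x)=f(x)^{-1}$ assemble into a morphism, which follows from the commuting squares for $f$), so there is nothing to add.
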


\begin{proposition}\label{prop:isomorphic implies same support}
Let $V$ and $V'$ be representations of $A_\R$ such that $V\cong V'$.
Then $\supp V=\supp V'$.
\end{proposition}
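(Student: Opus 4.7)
The plan is to reduce the statement to a pointwise comparison using Proposition \ref{prop:isomorphisms}. Recall that $\supp V = \{x \in \R : V(x) \neq 0\}$, so the equality $\supp V = \supp V'$ is really a family of statements, one for each $x \in \R$: namely, $V(x) = 0$ if and only if $V'(x) = 0$. Thus the natural strategy is to upgrade the global isomorphism $V \cong V'$ to a collection of pointwise isomorphisms $V(x) \cong V'(x)$, and then read off the support condition.

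First, I would fix an isomorphism $f \colon V \to V'$ in $\RepAR$. By Proposition \ref{prop:isomorphisms}, the component $f(x) \colon V(x) \to V'(x)$ is a $k$-linear isomorphism for every $x \in \R$. Since the zero vector space is (up to isomorphism) characterized by $\dim_k = 0$, and isomorphisms preserve dimension, we conclude $V(x) = 0$ precisely when $V'(x) = 0$. Taking complements in $\R$ gives $\supp V = \supp V'$.

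There is essentially no obstacle here: the entire content of the proposition is packaged into Proposition \ref{prop:isomorphisms}, which allows us to pass freely between isomorphism of representations and pointwise isomorphism of vector spaces. The only thing worth being careful about is that the definition of $\supp V$ uses the vector spaces $V(x)$ and not the structure maps $V(x,y)$, so we do not need to check anything about the commuting squares of Definition \ref{def:morphism}; the bijection on objects $V(x) \leftrightarrow V'(x)$ is enough.
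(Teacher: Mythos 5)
Your proof is correct and is the standard argument the paper alludes to (it states Propositions \ref{prop:isomorphisms} and \ref{prop:isomorphic implies same support} are proved just as in the discrete type $A$ case, without spelling out the details). Reducing to pointwise isomorphisms via Proposition \ref{prop:isomorphisms} and observing that a $k$-linear isomorphism preserves the property of being the zero space is exactly what is intended.
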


\subsection{The Subcategories $\ReppwfAR$ and $\RepbAR$}

In this subsection we define the pointwise finite and bounded subcategories of $\RepAR$.
We provide examples of representations in each subcategory and highlight the differences between them.

\begin{definition}\label{def:smallerreps}
The category of \underline{pointwise finite representations}, denoted $\ReppwfAR$, is the full subcategory of $\RepAR$ consisting of representations $V$ such that for all $x\in\R$, $\dim V(x)<\infty$.

The category of \underline{bounded representations}, denoted $\RepbAR$, is the full subcategory of $\ReppwfAR$ whose objects are representations $V$ such that there exists $n\in\N$ and for all $x\in\R$, $\dim V(x)<n$.
\end{definition}

It is important to note that the conditions in Definition \ref{def:smallerreps} are not related to the \emph{support} of any representation. I.e. there exist representations in both $\ReppwfAR$ and $\RepbAR$ with unbounded support.
Such examples are provided below.

\begin{example}\label{xmp:boundedreps}
We now give some examples of representations in $\ReppwfAR$ and $\RepbAR$.
Each representation will be over $A_\R$ with the straight descending orientation (see Remark \ref{rem:ARconvention}).
\begin{enumerate}
\item We give an example of a representation in $\RepbAR$ with unbounded support.
A representation in $\RepbAR$ is $V$:
\begin{align*}
V(x) &= \left\{\begin{array}{ll} k & x\geq 0\\ 0 & x<0 \end{array}\right. & V(x,y) & =\left\{\begin{array}{ll} 1_k & 0\leq y\leq x \\ 0 & \text{otherwise}\end{array}\right. 
\end{align*}
Notice that the \emph{support} of $V$ is unbounded.
This is fine. The \emph{dimension} of all the $V(x)$ vector spaces is bounded above by 1.

\item We now give an example of an infinite coproduct that is still in $\RepbAR$. Let $M=\bigoplus M_{\{z\}}$ where for each $z\in \R$, $M_{\{z\}}$ be the following representation of $A_\R$:
\begin{align*}
M_{\{z\}}(x) & = \left\{\begin{array}{ll} k & x=z\\ 0 & \text{otherwise} \end{array}\right. & M_{\{z\}}(x,y) &
=\left\{\begin{array}{ll} 1_k & x=y=z \\ 0 & \text{otherwise}\end{array}\right.
\end{align*}
That is, $M_{\{z\}}$ is the simple representation at $z$, which is in $\RepbAR$.
However, $M=\bigoplus_{z\in\R} M_{\{z\}}$ is also in $\RepbAR$ since $\dim M(x)=1$ for all $x\in\R$.

\item We now give an example of a representation in $\ReppwfAR$ \emph{but not} in $\RepbAR$.
Let $W$ be the representation of $A_\R$ where $W(x)$ is $k^n$ where $n=0$ if $x<1$ and $n$ is the largest integer less than or equal to $x$ otherwise. I.e., $W(10.4)=k^{10}$.
Let $W(x,y)$ be 0 if $y<1$ or $x<1$.
Otherwise, $W(x,y)$ is the projection of the first $\dim W(y)$ coordinates of $k^{\dim W(x)}$ using the standard basis.
For example, $W(10,4)$ is the projection of $k^{10}$ onto the first 4 coordinates.
While $W(x)$ is finite-dimensional for all $x\in\R$, there is no $n$ such that $\dim W(x)\leq n$ for all $x\in\R $.
\end{enumerate}
\end{example}

Originally, the authors only attempted to prove a version of Theorem \ref{thm:indecomposables} for $\RepbAR$.
However, it was noted that nearly all the proof techniques relied on finite-dimensional vector spaces, not on the dimension of the vector spaces being bounded.
In the category $\RepbAR$ the authors discovered projective indecomposable objects that are not projective in $\RepAR$.
Further study revealed these objects to also be projective in $\ReppwfAR$.
See Section \ref{sec:projectives} for details on these new projective objects.
These new projectives in $\ReppwfAR$ are necessary to obtain a category of finitely generated representations (Definition \ref{def:finitelygeneratedreps}, denoted $\repAR$) which has all the reasonable properties one could expect from a continuous version of finitely generated representations.

In contrast to the apparent superiority of $\ReppwfAR$, the category is simply too big to even have all projective covers.
While pathological examples of representations without projective covers can be constructed in both $\ReppwfAR$ and $\RepbAR$, the more well-behaved examples of representations without projective covers exist only in $\ReppwfAR$.
See Example \ref{xmp:no projective cover} in Section \ref{sec:projectives}.
Such a representation does not exist in $\RepbAR$ and so this can be considered the first step towards finitely generated representations.

\section{Classification of Indecomposable Pwf Representations}
In this section we provide a complete classification of indecomposable pointwise finite-dimensional represntations of a continuous quiver of type $A$ (Theorem \ref{thm:GeneralizedBarCode}).
We focus on representation theoretic techniques and provide a self-contained approach.
In particular, we characterize projective indecomposables (Theorems \ref{thm:point projective} and \ref{thm:characterization of one sided projectives}) before our decomposition theorem and obtain the completeness of the classification (Remark \ref{rem:indecomposableprojectives}) as a result.
Additionally, our proof of the decomposition theorem is algorithmic.
We discuss related results by Botnan and Crawley-Boevey in Section \ref{sec:relation to BC-B}.

\subsection{Projectives}\label{sec:projectives}
 
We will construct all pointwise finite-dimensional projective representations in the category $\ReppwfAR$. 
There are two types of indecomposable projectives: projectives $P_c$ generated at one point as in Definition \ref{def:projective generated}, which are quite similar to the projectives for finite quivers. These kinds of projectives are actually projective in both $\ReppwfAR$ and in $\RepAR$.
 The new kind of projectives $P_{c)}$ and $P_{(c}$ will be projectives which have half open intervals as supports as in Definition \ref{def:one sided projectives}. These representations are projective in $\ReppwfAR$ but are not projective in $\RepAR$ (see Example \ref{prop:not projective in big category}).
 We start with the case of a projective generated at one point.
 
 \begin{definition}
 Given any point $c\in \mathbb R$ and any vector space $X$ over $k$, let $(PX)_c$ be the representation defined as follows.
\[
	(PX)_c(x)=\begin{cases} X & \text{if }x\preceq c\\
    0& \text{otherwise}
    \end{cases}
\]
and $(PX)_c(x,y)=id_X$ if $y\preceq x\preceq c$.
 \end{definition}
 
 \begin{lemma}\label{lem: PXc is adjoint to restriction to c}
 For any representation $V$ in $\RepAR$ (not necessarily pointwise finite) and any $k$-vector space $X$ we have: 
 \[
 	\Hom((PX)_c,V)=\Hom_k(X,V(c)),
 \]
 i.e., the functor which takes $X$ to $(PX)_c$ is left adjoint to the evaluation functor $V\mapsto V(c)$.
 \end{lemma}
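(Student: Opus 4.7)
The plan is to construct the adjunction isomorphism explicitly by unit/counit style formulas and verify both directions. The naturality squares in $\RepAR$ together with the transitivity of $\preceq$ do essentially all the work.

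First I would define the forward map $\Phi: \Hom((PX)_c, V) \to \Hom_k(X, V(c))$ simply as evaluation at $c$: $\Phi(f) = f(c)$, which is meaningful because $(PX)_c(c) = X$. For the reverse map $\Psi: \Hom_k(X, V(c)) \to \Hom((PX)_c, V)$, given $g: X \to V(c)$ I would define a candidate morphism $\Psi(g)$ by
\begin{displaymath}
\Psi(g)(x) = \begin{cases} V(c,x)\circ g & \text{if } x\preceq c \\ 0 & \text{otherwise.} \end{cases}
\end{displaymath}
Note that $V(c,x)$ is defined precisely when $x \preceq c$, so the formula makes sense.

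Next I would verify that $\Psi(g)$ is genuinely a morphism of representations, i.e.\ that the naturality square commutes for every pair $(x,y)$ with $y \preceq x$. If $x \preceq c$, then by transitivity $y \preceq c$ as well, and both composites $\Psi(g)(y) \circ (PX)_c(x,y)$ and $V(x,y) \circ \Psi(g)(x)$ reduce to $V(c,y)\circ g$ using the functoriality identity $V(x,y)\circ V(c,x) = V(c,y)$ from Definition \ref{def:representation}. If $x \not\preceq c$, then $(PX)_c(x) = 0$ and both composites are zero.

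Finally I would check that $\Phi$ and $\Psi$ are mutually inverse. One direction is immediate: $\Phi(\Psi(g)) = V(c,c)\circ g = g$. For the other direction, given $f \in \Hom((PX)_c, V)$ I need $\Psi(f(c)) = f$ pointwise. For $x \preceq c$, the naturality of $f$ applied to the pair $(c,x)$ gives $f(x) \circ (PX)_c(c,x) = V(c,x)\circ f(c)$, and since $(PX)_c(c,x) = \id_X$ this reads $f(x) = V(c,x)\circ f(c) = \Psi(f(c))(x)$. For $x \not\preceq c$ both sides vanish because their source is zero. There is no serious obstacle here; the one point requiring care is using transitivity of $\preceq$ to ensure that $y \preceq x \preceq c$ implies $y \preceq c$, so that the cases in the definition of $\Psi(g)$ interact coherently with the naturality squares. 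Naturality in $V$ and in $X$ is then immediate from the formulas.
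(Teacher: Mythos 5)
Your proof is correct and takes essentially the same approach as the paper: the bijection is evaluation at $c$ in one direction and the formula $x\mapsto V(c,x)\circ g$ in the other, with the commutative square for $(c,x)$ forcing uniqueness. You are somewhat more explicit about verifying that $\Psi(g)$ is a well-defined morphism and that the two maps are mutually inverse, but there is no genuine difference in method.
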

 
 \begin{proof}
 Given any morphism $f:(PX)_c\to V$, let $f_c:(PX)_c(c)=X\to V(c)$ be the restriction of $f$ to the point $c$. Then, for any $x\preceq c$, the commutativity of the diagram:
 \[
\xymatrix{%begin xy matrix
(PX)_c(c)=X\ar[d]_{f_c}\ar[r]^{id_X} &
	X=(PX)_c(x)\ar[d]^{f_x}\\
V(c) \ar[r]^{V(c,x)}& 
	V(x)
	}%end xy matrix
 \]
 forces the map $f_x:(PX)_c(x)\to X(x)$ to be equal to $V(c,x)\circ f_c$. Conversely, any linear map $g:X\to V(c)$ extends to a morphism $\overline g:(PX)_c\to V$ by the same formula ($\overline g(x)=V(c,x)\circ g:(PX)_c(x)=X\to X(x)$).
 \end{proof}
 
 \begin{theorem}\label{thm:point projective}
For any vector space $X$ and any $c\in \mathbb R$, the representation $(PX)_c$ is projective in $\RepAR$.
 \end{theorem}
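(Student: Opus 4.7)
The plan is to deduce projectivity of $(PX)_c$ directly from the adjunction of Lemma \ref{lem: PXc is adjoint to restriction to c}, together with the observation that the evaluation functor $\mathrm{ev}_c : \RepAR \to \mathrm{Vect}_k$ sending $V \mapsto V(c)$ is exact.

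First I would verify that any epimorphism $p : V \twoheadrightarrow W$ in $\RepAR$ is pointwise surjective, i.e.\ $p(c) : V(c) \to W(c)$ is surjective. This is the standard fact that kernels and cokernels in $\RepAR$ are computed pointwise; concretely, the pointwise cokernel $x \mapsto \coker p(x)$ carries a natural representation structure, and its vanishing is equivalent to $p$ being an epimorphism. Thus $\mathrm{ev}_c$ preserves epimorphisms.

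Next, given any morphism $g : (PX)_c \to W$, I want to lift $g$ through $p$. Lemma \ref{lem: PXc is adjoint to restriction to c} identifies $g$ with a $k$-linear map $\bar g : X \to W(c)$. Since $p(c) : V(c) \to W(c)$ is surjective and $X$ is a free (hence projective) $k$-vector space, there exists a $k$-linear lift $\tilde g : X \to V(c)$ with $p(c) \circ \tilde g = \bar g$. By the adjunction again, $\tilde g$ corresponds to a morphism $\hat g : (PX)_c \to V$ in $\RepAR$. Naturality of the adjunction in the second variable means that composition with $p$ on the representation side matches composition with $p(c)$ on the vector-space side; thus $p \circ \hat g$ corresponds under the adjunction to $p(c) \circ \tilde g = \bar g$, so $p \circ \hat g = g$.

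There is no substantial obstacle here; the only nontrivial ingredient is the exactness of $\mathrm{ev}_c$, and this is essentially a formality once it is observed that cokernels in $\RepAR$ may be computed pointwise. Everything else is automatic from the adjunction and the fact that every $k$-vector space is projective.
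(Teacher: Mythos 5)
Your argument is correct and is essentially the paper's proof: the paper likewise uses pointwise surjectivity of $p$ at $c$, lifts $f_c:X\to W(c)$ through $p(c)$, and invokes Lemma~\ref{lem: PXc is adjoint to restriction to c} both to extend the lift to a morphism $(PX)_c\to V$ and to conclude that agreement at $c$ forces $p\circ\overline g=f$. Your phrasing in terms of naturality of the adjunction is just a more explicit way of stating that last uniqueness step.
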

 
 \begin{proof}
 Let $p:V\to W$ be an epimorphism and let $f:(PX)_c\to W$ be any morphism. Then $p_c:V(c)\to W(c)$ is an epimorphism. So, the linear map $f_c:X\to W(c)$ lifts to a map $g:X\to V(c)$ which, by Lemma \ref{lem: PXc is adjoint to restriction to c}, extends to a morphism $\overline g:(PX)_c\to V$. Since $p\circ \overline g$ and $f:(PX)_c\to W$ agree at $c$, they are equal by Lemma \ref{lem: PXc is adjoint to restriction to c}. So, $(PX)_c$ is projective.
 \end{proof}
 
Note that $(PX)_c$ is indecomposable if and only if $X$ is one-dimensional as it is in the following definition. In this case the indecomposable projective is denoted simply by $P_c$.
 
 \begin{definition}\label{def:projective generated}
Let  $c\in \mathbb R$. Define the representation $P_c$ in $\RepAR$ as:
 \[\begin{array}{ccc}
	P_c(x)=\begin{cases} k & \text{if }x\preceq c\\
    0& \text{otherwise}
    \end{cases}&,\qquad \qquad&P_c(x,y)=\left\{\begin{array}{ll} 1_k & \text{if } y\preceq x\preceq c \\ 0 &\text{otherwise} \end{array}\right.
    \end{array}
\]
  \end{definition}
 
 The rest of this subsection is devoted to the construction of all pointwise finite-dimensional projective representation, including objects $P_{(a}$ and  $P_{b)}$ for $s_{2n-1}<a<s_{2n}<b<s_{2n+1}$ with supports $(a,s_{2n}]$ and $[s_{2n},b)$ respectively.
 
In order to describe these new types of
projective representations in the category of pointwise finite-dimensional representations of $A_{\mathbb R}$ we need to set up notation of ``image filtration'' (Definition \ref{def:image filtration}) and ``support intervals'' (Definition \ref{def:support intervals}).
Recall $s_n$ is a sink if $n$ is even and a source if $n$ is odd.

\begin{deff}\label{def:image filtration}
Let $V$ be a pointwise finite-dimensional representation of $A_\R$ such that $\supp V\subset [s_0,s_1]$. %[
(If $s_0=-\infty$ or $s_1=+\infty$ then $\supp V$ is a subset of $(s_0, s_1]$, $[s_0,s_1)$, or $(s_0,s_1)$, whichever applies.) %]
Let $b=s_0$ or $b\in (s_0,s_1)$.

Let $b$ be the greatest lower bound of $\supp V$. When $b\in\supp V$ we set $V^{\bullet}(b)=V(b)$.
The \underline{image filtration} of $V^{\bullet}(b)$ is the set of distinct subspaces of the form $V(x,b)(V(x))$.
Let $V^{\circ}(b)$ be the colimit of the vector spaces $V(x)$, for $b<x<s_1$, with the linear maps $V(x,y)$, for $b<y\leq x<s_1$. Since each $V(x)$ is finite-dimensional, $V^{\circ}(b)$ is at most countably infinite dimensional.
Denote by $V^{\circ}(x,b)$ the colimit linear map from $V(x)$ to $V^{\circ}(b)$.
The \underline{image filtration} of $V^{\circ}(b)$ is the set of distinct (finite-dimensional) subspaces of the form $V^{\circ}(x,b)(V(x))$.

When $b\in \supp V$ we take $I$ to be $[b,c]$ or $[b,c)$.
When $b\notin \supp V$ we take $I$ to be $(b,c]$ or $(b,c)$. %)
For all such $I$ and when $b\in\supp V$, let
\begin{equation}\label{eqn:def of VI(b)}
V^{\bullet}_I(b): = \bigcap_{x\in I} V(x,b)(V(x)) \subset V^{\bullet}(b)
\end{equation}
Whether or not $b\in\supp V$, we let
\begin{equation}\label{eqn:def of FVI(b)}
V^{\circ}_{I\setminus \{b\}}(b) :=  \bigcap_{x\in I\setminus \{b\}} V^{\circ}(x,b)(V(x))\subset V^{\circ}(b).
\end{equation}
Then $V^{\bullet}_I(b)$ and $V^{\circ}_I(b)$ are members of the image filtrations of $V^{\bullet}(b)$ and $V^{\circ}(b)$, respectively.
In particular, there exists $x_0$ in $I$ such that $V^{\bullet}_I(b)=V(x_0,b)(V(x_0))$ or $x_0\in I\setminus b$ such that $V^{\circ}_I(b)=V^{\circ}(x_0,b)(V(x_0))$.
Whenever $b\in\supp V$, $V^{\bullet}(b)$ is finite-dimensional and so the image filtration is finite.
Since $V^{\circ}(b)$ may not be finite-dimensional and the dimension of the vector spaces $V(x)$ are not bounded the filtration on $V^{\circ}(b)$ may be infinite but still countable with a minimal term. %[
In fact, $V(s_1,b)(b)$ and $V^{\circ}(s_1,b)$ are the minimal objects in the filtrations of $V^{\bullet}(b)$ and $V^{\circ}(b)$, repsectively.~ %)
\end{deff}

\begin{remark} \label{rem:V_I(b)}
\begin{itemize}
\item[(a)\ ] If $I=[b,c]$ ($c\in I$) then 
\begin{align*}
V^{\bullet}_I(b)=V^{\bullet}_{[b,c]}(b)&= V(c,b)(V(c)) \\
V^{\circ}_{I\setminus \{b\}}(b) =V^{\circ}_{[b,c]}(b) &=V^{\circ}(c,b)(V(c)).
\end{align*}
\item[(b)\ ] If $I=[b,c)$ ($c\notin I$) then whenever $c\in \R$ we have
\begin{align*} %(
V^{\bullet}_I(b)=V^{\bullet}_{[b,c)}(b) & \supset V(c,b)(V(c)) \text{ and} \\ %]
V^{\circ}_{I\setminus\{b\}}(b) = V^{\circ}_{(b,c)}(b) & \supset V^{\circ}(c,b)(V(c))
\end{align*}
but in both cases the subspaces may be different.\\
\item[(c)\ ] For any $x<z$ in $I$, the term $V(x,b)(V(x))$ is redundant in the intersection \eqref{eqn:def of VI(b)} since $V(x,b)(V(x))\supset V(z,b)(V(z))$. Thus,
\[
  V^{\bullet}_I(b)=\bigcap_{x\in I} V(x,b)(V(x)) =\bigcap_{y\in I,y\ge z} V(z,b)V(y,z)(V(y))=V(z,b)V^{\bullet}_{I\cap [z,s_1]}(z)
\]
\item[(c')] For any $x<z$ in $I$, the term $V^{\circ}(x,b)(V(x))$ is redundant in the intersection \eqref{eqn:def of FVI(b)} since $V^{\circ}(x,b)(V(x))\supset V^{\circ}(z,b)(V(z))$. Thus,
\[
  V^{\circ}_I(b)=\bigcap_{x\in I} V^{\circ}(x,b)(V(x)) =\bigcap_{y\in I,y\ge z} V^{\circ}(z,b)V^{\circ}(y,z)(V(y))=V^{\circ}(z,b)V^{\bullet}_{I\cap [z,s_1]}(z)
\]
\end{itemize}
\end{remark}

\begin{deff} \label{def:support intervals} Let  $W\subset V^{\bullet}(b)$ be a subspace. Define $I_W$ as: 
$$I_W=\{x\geq b\in \supp V \ |\ W\subset V(x,b)(V(x))\}$$
Such $\{I_W\}$ are called  \underline{support intervals for} $V^{\bullet}(b)$.

Let $W\subset V^{\circ}(b)$ be a finite-dimensional subspace.
Then we define $I_W$ similarly for $b\notin I$:
$$I_W=\{x> b\in \supp V \ |\ W\subset V^{\circ}(x,b)(V(x))\}$$
These $\{I_W\}$ are also called \underline{support intervals for} $V^{\circ}(b)$.
\end{deff}

 \begin{proposition} \label{prop:I_W}
 \begin{itemize}
 \item[(a)\ ] There is a 1-1 correspondence between support intervals for $V^{\bullet}(b)$ and the terms in the image filtration of $V^{\bullet}(b)$ given by $I\mapsto V^{\bullet}_I\subset V^{\bullet}(b)$ and $W\mapsto I_W$.
\item[(a')] There is also a 1-1 correspondence beween the support intervals for $V^{\circ}(b)$ and the terms in the image filtration of $V^{\circ}(b)$ given by $I\mapsto V^{\circ}_I\subset V^{\circ}(b)$ and $W\mapsto I_W$.
\end{itemize}
\end{proposition}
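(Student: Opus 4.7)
The plan is to verify that the two maps are mutually inverse bijections once the monotonicity of the image filtration is set down. The key preliminary observation is that on the interval $[s_0,s_1]$ the partial order $\preceq$ agrees with $\leq$, so for any $b\leq z\leq y$ in the support one has $b\preceq z\preceq y$ and the factorization $V(y,b)=V(z,b)\circ V(y,z)$ yields the inclusion $V(y,b)(V(y))\subset V(z,b)(V(z))$. Thus the image filtration of $V^{\bullet}(b)$ is a totally ordered, finite decreasing chain as $x$ increases, and in particular every set $I_W$ is automatically an interval of the form $[b,c]$ or $[b,c)$, matching the shape required in Definition \ref{def:image filtration}.

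Given this, I would first establish that $V^{\bullet}_{I_W}=W$ for every term $W$ in the image filtration. Write $W=V(x_0,b)(V(x_0))$ using the $x_0$ supplied by Definition \ref{def:image filtration}. Then $x_0\in I_W$ trivially, so the defining intersection gives $V^{\bullet}_{I_W}\subset V(x_0,b)(V(x_0))=W$. The reverse containment is immediate because every $x\in I_W$ satisfies $W\subset V(x,b)(V(x))$, so $W$ lies in the intersection.

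Next, for the other composite, given a support interval $I$, write $I=I_{W'}$ for some subspace $W'$ (noting that $W'$ need not itself be in the image filtration). Setting $U=V^{\bullet}_{I_{W'}}$, one has $W'\subset U$, and I would argue $I_U=I_{W'}$ directly: if $x\in I_{W'}$ then $U\subset V(x,b)(V(x))$ by definition of $U$, so $x\in I_U$; conversely if $x\in I_U$ then $W'\subset U\subset V(x,b)(V(x))$, giving $x\in I_{W'}$. Combined with the previous step, this yields $I_{V^{\bullet}_I}=I_U=I_{W'}=I$, completing the bijection for $V^{\bullet}(b)$.

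For part (a$'$), the argument is formally identical after substituting $V^{\circ}$ for $V^{\bullet}$, $V^{\circ}(x,b)$ for $V(x,b)$, and taking the intersection over $I\setminus\{b\}$. The only new subtlety is that $V^{\circ}(b)$ may be infinite-dimensional, but each filtration term $V^{\circ}(x,b)(V(x))$ is finite-dimensional and the chain of such terms is still totally ordered by inclusion with a minimal element; Definition \ref{def:image filtration} supplies an $x_0$ realizing each term, which is all the previous argument requires. I do not anticipate a serious obstacle; the proposition is essentially formal once monotonicity is recorded. The main point requiring care is the bookkeeping distinction between a \emph{support interval} (an $I_{W'}$ for an arbitrary subspace $W'$) and an \emph{image filtration term}, which is why the $I\mapsto V^{\bullet}_I\mapsto I$ direction has to route through the auxiliary subspace $U$ rather than directly invoking the first composite.
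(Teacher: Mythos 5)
Your proof is correct and follows essentially the same route as the paper: both directions of the bijection are established by unwinding the definitions, using the fact that for a filtration term $W=V(x_0,b)(V(x_0))$ the index $x_0$ lies in $I_W$, and that the intersection defining $V^{\bullet}_I$ picks out a term in the (stabilizing, decreasing) image filtration. Your extra step of introducing the auxiliary subspace $U=V^\bullet_{I_{W'}}$ and comparing $I_U$ with $I_{W'}$ is just a slightly more explicit bookkeeping of the paper's second paragraph, and the remark that $(a')$ is formally identical modulo replacing $V^\bullet$ by $V^\circ$ matches the paper's treatment.
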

\begin{proof}
We first prove (a).
If $W=V(x_0,b)(V(x_0))$ then $I_W$ contains $x_0$ and $V^{\bullet}_{I_W}$ is the intersection of $V(x_0,b)(V(x_0))=W$ and the subspaces $V(x,b)(V(x))$ which all contains $W$ by definition. So, $V^{\bullet}_{I_W}=W$.

If $I=I_W$ then $W\subset V^{\bullet}_I(b)$ by definition. If $V(x,b)(V(x))$ contains $V^{\bullet}_I(b)$, it contains $W$. So, $I_{V^{\bullet}_I(b)}\subset I_W=I$. But $I\subset I_{V^{\bullet}_I(b)}$. So, $I= I_{V^{\bullet}_I(b)}$ for any support interval $I$.

The proof of (a) as stated works for (a') if we replace $V^{\bullet}_I$ with $V^{\circ}_I$.
\end{proof}

\begin{remark}
The image filtration of $V^{\bullet}(b)$ can be written:
\begin{displaymath}V^{\bullet}(b)\supsetneq V(x_n,b)(V(x_n)) \supsetneq V(x_{n-1},b)(V(x_{n-1})) \supsetneq\cdots\supsetneq V(x_1,b)(V(x_1)).\end{displaymath}
By Proposition Proposition \ref{prop:I_W}, we see this is actually a filtration
\begin{displaymath} V^{\bullet}(b)\supsetneq V^{\bullet}_{I_n}\supsetneq V^{\bullet}_{I_{n-1}}\supsetneq\cdots\supsetneq V^{\bullet}_{I_1} . \tag{$\bullet *$}\end{displaymath}
where each $x_i$ in the first form is an element of $I_i$ in the second form.

For the image filtration of $V^{\circ}(b)$, we have the following equivalent forms, where each $x_i$ in the first form is an element of $I_i$ in the second form:
\begin{displaymath}V^{\circ}(b)\ \cdots\supsetneq V^{\circ}(x_n,b)(V(x_n)) \supsetneq V^{\circ}(x_{n-1},b)(V(x_{n-1})) \supsetneq\cdots\supsetneq V^{\circ}(x_1,b)(V(x_1)).\end{displaymath}
By Proposition \ref{prop:I_W}, we see this is actually a filtration
\begin{displaymath} V^{\circ}(b)\ \cdots\supsetneq V^{\circ}_{I_n}\supsetneq V^{\circ}_{I_{n-1}}\supsetneq\cdots\supsetneq V^{\circ}_{I_1} . \tag{$\circ*$}\end{displaymath}
\end{remark}

\begin{lemma}\label{lem:support of image is support}
\begin{itemize}
\item[(a)\ ] Let $W'\subsetneq W$ be consecutive terms in the image filtration ($\bullet *$) of $V^{\bullet}(b)$. Then $I_W\subsetneq I_{W'}$ and, for any element $x\in (I_{W'}-I_W)$, we have $V(x,b)(V(x))=W'$ in $V^{\bullet}(b)$.
\item[(a')] Let $W'\subsetneq W$ be consecutive terms in the image filtration ($\circ *$) of $V^{\circ}(b)$. Then $I_W\subsetneq I_{W'}$ and, for any element $x\in (I_{W'}-I_W)$, we have $V^{\circ}(x,b)(V(x))=W'$ in $V^{\circ}(b)$.
\end{itemize}
\end{lemma}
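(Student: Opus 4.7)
The plan is to prove (a) directly; part (a') will follow from the identical argument with $V^{\bullet}(b)$ replaced by $V^{\circ}(b)$ and the corresponding notion of support interval, since no step actually uses anything beyond the chain structure of the image filtration. First I would dispose of $I_W \subseteq I_{W'}$: if $W \subseteq V(x,b)(V(x))$ then $W' \subseteq W \subseteq V(x,b)(V(x))$, so $x \in I_{W'}$. Strictness is then immediate from Proposition \ref{prop:I_W}, which furnishes a bijection between image-filtration terms and support intervals: since $W \neq W'$ correspond under this bijection to distinct support intervals, $I_W \neq I_{W'}$, hence $I_W \subsetneq I_{W'}$.

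The substantive step is to show that every $x \in I_{W'} \setminus I_W$ satisfies $V(x,b)(V(x)) = W'$. The key input is that the image filtration of $V^{\bullet}(b)$ is linearly ordered by inclusion: for $b \leq y \leq x$ both in $\supp V$, the factorization $V(x,b) = V(y,b) \circ V(x,y)$ forces $V(x,b)(V(x)) \subseteq V(y,b)(V(y))$, so the terms form a chain (as displayed in the remark preceding the lemma). Thus $V(x,b)(V(x))$ itself appears as one of the terms in this chain. From $x \in I_{W'}$ I read off $W' \subseteq V(x,b)(V(x))$, and from $x \notin I_W$ I read off $W \not\subseteq V(x,b)(V(x))$. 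Since $W'$ and $W$ are consecutive in the chain, no term lies strictly between them; combined with $V(x,b)(V(x)) \supseteq W'$ and $V(x,b)(V(x)) \not\supseteq W$, the only remaining possibility is $V(x,b)(V(x)) = W'$.

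I do not anticipate any real obstacle beyond being explicit about the chain structure of the image filtration; the whole argument is a short combination of Proposition \ref{prop:I_W} with the total ordering of the filtration. The one subtlety worth flagging in case (a') is that $b$ need not lie in $\supp V$, so the support intervals for $V^{\circ}(b)$ are defined over $\supp V \cap (b,+\infty)$ rather than $\supp V \cap [b,+\infty)$; this is cosmetic and does not affect the reasoning.
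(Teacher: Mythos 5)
Your argument is correct and is essentially the one the paper gives: both rest on observing that $V(x,b)(V(x))$ is a term of the totally ordered image filtration, that $x\in I_{W'}$ forces $W'\subseteq V(x,b)(V(x))$, that $x\notin I_W$ forces $V(x,b)(V(x))\subsetneq W$, and that consecutivity then pins $V(x,b)(V(x))=W'$. The only difference is cosmetic: you spell out the containment $I_W\subsetneq I_{W'}$ via Proposition~\ref{prop:I_W} rather than leaving it implicit, and you flag the $x>b$ convention in case~(a'), neither of which changes the substance.
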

\begin{proof}
We prove (a) first.
Since $x\notin I_W$ it follows from Remark \ref{rem:V_I(b)}(c) that  $W$ is not a subset of $V(x,b)(V(x))$  in $V^{\bullet}(b)$. 
Since $x\in I_{W'}$ it follows that $W'\subset V(x,b)(V(x))$. Thus $W'\subset V(x,b)(V(x))\subsetneq W$. Since $W,W'$ are consecutive in the image filtration, $V(x,b)(V(x))=W'$ as claimed.
The proof of (a) as stated works for (a') by replacing $V^{\bullet}_I$ with $V^{\circ}_I$.
 \end{proof}

\begin{lemma}\label{lem:one step lifting lemma}
Let $W\subset V^{\bullet}(b)$ (resp.~$W\subset V^{\circ}(b)$) be a finite-dimensional subspace and let $x_1<x_2\in I_W$. 
Let  $I_1=I_W\cap [x_1,s_1]$ and $I_2=I_W\cap [x_2,s_1]$. %(
(If $s_1=+\infty$ then, for each $i$, $I_i=I_W\cap [x_i,s_1)$). %]
Then $V^{\bullet}_{I_1}(x_1)\subset V(x_2,x_1)(V^{\bullet}_{I_2}(x_2))$ (resp.~$V^{\circ}_{I_1}(x_1)\subset V(x_2,x_1)(V^{\circ}_{I_2}(x_2))$).
\end{lemma}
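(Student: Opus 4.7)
The plan is to realize this lemma as an almost immediate consequence of Remark \ref{rem:V_I(b)}(c) (for the $V^\bullet$ case) and (c') (for the $V^\circ$ case). Those remarks already express $V^\bullet_I(b)$ as the image under a transition map of a smaller intersection starting further to the right; here I only need to take ``$b$'' $= x_1$ and ``$z$'' $= x_2$, and then identify the resulting index set.

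First I would check that the hypotheses of Remark \ref{rem:V_I(b)}(c) are satisfied for the interval $I_1 = I_W \cap [x_1, s_1]$ with $z = x_2$: since $x_1, x_2 \in I_W$ and $x_1 < x_2$, both points lie in $I_1$. Applying Remark \ref{rem:V_I(b)}(c) with these substitutions gives
\[
V^\bullet_{I_1}(x_1)\;=\;V(x_2,x_1)\bigl(V^\bullet_{I_1\cap[x_2,s_1]}(x_2)\bigr).
\]
I would then simplify the index set $I_1 \cap [x_2, s_1] = I_W \cap [x_1,s_1] \cap [x_2,s_1] = I_W \cap [x_2,s_1] = I_2$, yielding the equality $V^\bullet_{I_1}(x_1) = V(x_2,x_1)(V^\bullet_{I_2}(x_2))$, which is in fact stronger than the claimed inclusion. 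The $V^\circ$ case proceeds identically from Remark \ref{rem:V_I(b)}(c'), with the finite-dimensional subspace $W \subset V^\circ(b)$ playing the role that kept us inside a finite-dimensional piece.

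The one step that deserves attention, tacitly used inside Remark \ref{rem:V_I(b)}(c), is the commutation of an infinite intersection with a linear map, which is not valid in general. In our setting the family $\{V(y,x_2)(V(y)) : y \in I_1,\, y \geq x_2\}$ is a decreasing family of subspaces of the finite-dimensional space $V(x_2)$, and (in the $V^\circ$ case) a decreasing family of finite-dimensional subspaces of $V^\circ(x_2)$; in either case the decreasing family must stabilize at some single $V(y^\ast,x_2)(V(y^\ast))$, and applying $V(x_2,x_1)$ to this stabilized intersection trivially commutes with the intersection operation. This finite-dimensional stabilization is the only genuine obstacle, and it is mild; everything else is an indexing identification.
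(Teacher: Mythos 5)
Your proposal is correct and follows the same route as the paper, whose entire proof is the single sentence ``This is a special case of Remarks \ref{rem:V_I(b)}(c) and \ref{rem:V_I(b)}(c').'' You make two additions that are worth having: (i) you explicitly identify the index set $I_1 \cap [x_2,s_1] = I_2$, and (ii) you flag and resolve the point that Remark \ref{rem:V_I(b)}(c) silently uses the commutation of a linear map with an infinite intersection. That second point is the real content here, since the inclusion claimed in the lemma, $V^\bullet_{I_1}(x_1) \subset V(x_2,x_1)(V^\bullet_{I_2}(x_2))$, is precisely the nontrivial direction of that commutation (the trivial containment $f(\bigcap A_i)\subset\bigcap f(A_i)$ gives the reverse inclusion). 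Your stabilization argument — the decreasing family $\{V(y,x_2)(V(y))\}$ lies in the finite-dimensional space $V(x_2)$ and so is eventually constant, after which the intersection is a single term — closes the gap cleanly, and indeed gives equality. In short: same approach as the paper, with the paper's unstated finite-dimensionality hypothesis made explicit.
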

\begin{proof} This is a special case of Remarks \ref{rem:V_I(b)}(c) and \ref{rem:V_I(b)}(c'). \end{proof}

\begin{lemma}\label{lem:compatible system of elements}%(
\item [(a)\ ] For any interval $I$ of the form $[b,c]$ or $[b,c)$ and any element $v\in V^{\bullet}_I(b)$, there is a collection of elements $\{V^{\bullet}_x\in V(x)\}_{x\in I}$ so that $V^{\bullet}_b=v$ and $V(y,x)(V^{\bullet}_y)=V^{\bullet}_x$ for all $b\le x\le y\in I$. %] [
\item[(a')] For any interval $I$ of the form $(b,c]$ or $(b,c)$ and any element $v\in V^{\circ}_I(b)$, there is a collection of elements $\{v_x\in V(x)\}_{x\in I}$ so that $v_b=v$ and $V(y,x)(v_y)=v_x$ and $V^{\circ}(x,b)(v_x)=v_b$ for all $b< x\le y\in I$. %)
\end{lemma}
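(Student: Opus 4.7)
The plan is a Zorn's Lemma argument built on compatible affine fibers. I describe (a) in detail; part (a') is parallel, with $V^\bullet$ replaced by $V^\circ$ at the base $b$ and Remark \ref{rem:V_I(b)}(c') used in place of (c).

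For $x\in I$, set $W_x := V^\bullet_{I\cap [x,s_1]}(x)\subseteq V(x)$. Remark \ref{rem:V_I(b)}(c) applied at base $b$ with $z=x$ gives $V^\bullet_I(b) = V(x,b)(W_x)$, so $v$ has a preimage in $W_x$; applied at base $x_1$ with $z=x_2$ for $x_1 < x_2$ in $I$ it gives $W_{x_1} = V(x_2,x_1)(W_{x_2})$, so each transition $V(x_2,x_1)\colon W_{x_2}\to W_{x_1}$ is surjective. Define $F_x := V(x,b)^{-1}(v) \cap W_x$; each $F_x$ is a non-empty, finite-dimensional affine subspace, and for $x\le y$ in $I$ the restriction $V(y,x)\colon F_y \to F_x$ is surjective (since $V(x,b)\circ V(y,x)=V(y,b)$).

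Let $P$ be the set of pairs $(J,(v_x)_{x\in J})$ with $b\in J\subseteq I$, $v_b=v$, each $v_x\in F_x$, and $V(y,x)(v_y)=v_x$ whenever $x\le y$ in $J$, ordered by extension. Unions of chains remain in $P$, so Zorn yields a maximal element $(J,(v_x)_{x\in J})$. Assume toward contradiction there exists $x\in I\setminus J$. If $x<\sup J$, pick any $y\in J$ with $y>x$ and set $v_x := V(y,x)(v_y)$; compatibility on $J$ shows this is independent of the choice of $y$, that $v_x\in F_x$, and that $(J\cup\{x\},(v_z))\in P$. Otherwise $x$ is an upper bound of $J$, and for each $y\in J$ set $A_y := V(x,y)^{-1}(v_y)\cap F_x$. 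Surjectivity of $V(x,y)\colon F_x\to F_y$ gives $A_y\ne\emptyset$, and the identity $V(x,y_1)=V(y_2,y_1)\circ V(x,y_2)$ yields $A_{y_2}\subseteq A_{y_1}$ for $y_1<y_2$ in $J$. Each $A_y$ is an affine subspace of the finite-dimensional space $F_x$, so the non-increasing integer-valued function $y\mapsto \dim A_y$ attains its minimum at some $y^\star\in J$; for $y\in J$ with $y\ge y^\star$, the containment $A_y\subseteq A_{y^\star}$ of equal dimensions is an equality, hence $\bigcap_{y\in J} A_y = A_{y^\star}\ne\emptyset$. Any element of this intersection serves as $v_x$ and provides a strict extension, contradicting maximality. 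Therefore $J=I$.

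The main technical step is the upper-bound case, which uses simultaneously the finite-dimensionality of $W_x\subseteq V(x)$ (to force the descending chain of $A_y$ to stabilize) and the surjectivity of $V(y,x)\colon F_y\to F_x$ extracted from Remark \ref{rem:V_I(b)}(c), the same content as the one-step lifting Lemma \ref{lem:one step lifting lemma}. For (a'), the subspaces $W_x\subseteq V(x)$ are unchanged, so the descending-chain argument applies verbatim; the only novelty is that the base-point condition $V^\circ(x,b)(v_x)=v$ now lives in the potentially infinite-dimensional colimit $V^\circ(b)$, which is harmless because the fibers $F_x:=V^\circ(x,b)^{-1}(v)\cap W_x$ remain finite-dimensional.
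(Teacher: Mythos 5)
Your proof is correct, and it takes a genuinely different route from the paper's. The paper's argument handles the closed case $[b,c]$ directly, and for the half-open case $[b,c)$ or $(b,c)$ it fixes a countable increasing sequence $x_1<x_2<\cdots$ in $I$ converging to $c$, recursively lifts $v$ along this sequence using Lemma \ref{lem:one step lifting lemma}, and then defines $v_x$ for general $x\in I$ by pushing a sufficiently far-out $v_l$ down to $x$; well-definedness follows from the recursion. You instead run a Zorn's Lemma argument on partial compatible families and confront the limit ordinal step head-on: the collection of affine fibers $A_y\subseteq F_x$ is nested, and finite-dimensionality of $F_x\subseteq V(x)$ forces the descending chain of dimensions to stabilize, giving a nonempty intersection and hence a simultaneous lift. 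Both proofs ultimately rest on the same two ingredients---the one-step surjectivity encoded in Remark \ref{rem:V_I(b)}(c) (equivalently Lemma \ref{lem:one step lifting lemma}) and the finiteness of $\dim V(x)$---but they deploy them differently. The paper's version is more constructive, needing only countable dependent choice and one explicit sequence; yours is cleaner in that it avoids the choice of a cofinal sequence and the attendant well-definedness check, at the cost of invoking Zorn and a slightly more delicate stabilization argument at the maximal element. One small remark: your case ``$x$ is an upper bound of $J$'' in fact subsumes the case $I=[b,c]$, $x=c$ as a one-step instance, so you do not need to treat the closed interval separately as the paper does.
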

\begin{proof} 
If $I=[b,c]$ then $V^{\bullet}_I(b)=V(c,b)(V(c))$ and we can choose one element $w\in V^\bullet(c)$ so that $V(c,b)(w)=v$ and let $v_x=V(c,x)(w)$ for all $x\in [b,c]$. %[
Similarly, if $I=(b,c]$ then $V^{\circ}_I(b) =V^{\circ}(c,b)(V(c))$ and we make a similar choice. %)(
 
Otherwise, $I=[b,c)$ or $I=(b,c)$ for some $c>b$. %]
In this case, choose an increasing sequence of real numbers $b<x_1<x_2<x_3<\cdots$ in $(b,c)$ converging to $c$.
Let $J_0=I$, for each $l>0$, let $J_l=I\cap [x_l,c)$ and dditionally, let $v_l\in V^{\bullet}_{J_l}(x_l)$ (resp.~$v_l\in V^\circ_{J_l}(x_l)$) be chosen recursively as follows.
\begin{enumerate}
\item Set $v_0=v\in V^{\bullet}_I(b)$, (resp.~$v_0=v\in V^{\circ}_I(b)$).
\item Given $v_k$ in $V^{\bullet}_{J_l}(x_l)$ (resp.~in $V^{\circ}_{J_l}(x_l)$), by Lemma \ref{lem:one step lifting lemma}, there exists $v_{l+1}$ in $V^{\bullet}_{J_{l+1}}(x_{l+1})$ (resp.~$V^{\circ}_{J_{l+1}}(x_{l+1})$) so that $V(x_{k+1},x_k)(v_{k+1})=v_k$ (resp.~$V^{\circ}(x_{l+1},x_l)(v_{l+1})=v_k$).
\end{enumerate}
After this sequence of elements $v_k$ is chosen, the vector $v_x$ for any $x\in I$ is given by $v_x=V(x_l,x)(v_k)$ for any $x_l>x$. This is well defined by condition (2) in the case of $V^{\bullet}(b)$ and by condition (2) combined with the universal property of $V^{\circ}(b)$ in that case.
\end{proof}

\begin{deff}\label{def:one sided projectives}
Let $s_0$ be a sink or $-\infty$ and let $s_1>s_0$ be the next source or $+\infty$.
Let $s_0<a<s_1$.
For $I=[s_0,a]$ or $[s_0,a)$ let $P_I$, also written $P_a=P_{[s_0,a]}$ or $P_{a)}=P_{[s_0,a)}$, denote the representation with support $I$ so that $P_I(x)$ is one-dimensional with generator $v_x$ for all $x\in I$ and $P(y,x)(v_y)=v_x$ for all $x<y\in I$.

For $a=s_1$, define $P_{a)}$ as before.
However, when $a=s_1$, $P_a$ is not defined this way.
If $s_0=-\infty$ then $P_a$ and $P_{a)}$ are instead $P_{(s_0,a]}$ and $P_{(s_0,a)}$, respectively.
\end{deff}

\begin{proposition}\label{prop:one sided at a}
$P_a$ and $P_{a)}$ as in Definition \ref{def:one sided projectives} are projective in $\ReppwfAR$.
\end{proposition}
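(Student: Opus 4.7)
The first half of the proposition is immediate: the representation $P_a$ of Definition \ref{def:one sided projectives} is pointwise one-dimensional on exactly the set $\{x : x \preceq a\} = [s_0,a]$ (or $(s_0,a]$ if $s_0=-\infty$) with all transition maps the identity, so it coincides with the point projective of Definition \ref{def:projective generated}, which is already projective in $\RepAR$, and therefore in $\ReppwfAR$, by Theorem \ref{thm:point projective}. Thus the whole content of the proposition is the case of $P_{a)}$, which I focus on.

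The plan is as follows. Given an epimorphism $p:V\twoheadrightarrow W$ in $\ReppwfAR$ and a morphism $f:P_{a)}\to W$, write $I$ for the support of $P_{a)}$ and record $f$ by the compatible family $f_x:=f(v_x)\in W(x)$, $x\in I$, satisfying $W(y,x)(f_y)=f_x$ for $x\leq y$. A lift $\bar g$ is then the same as a compatible family $\{g_x\in V(x)\}_{x\in I}$ with $p_x(g_x)=f_x$ and $V(y,x)(g_y)=g_x$, so I need to produce one. My central tool will be an \textbf{affine} image filtration: fix any $b\in I$ and, for each $y\in I$ with $y\geq b$, set $\tilde V(y):=p_y^{-1}(f_y)$, a nonempty affine subspace of $V(y)$; because $p$ is a morphism of representations, $V(y,x)$ restricts to a map $\tilde V(y)\to\tilde V(x)$. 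The key observation, replacing the linear image-filtration stabilization used in Lemma \ref{lem:support of image is support}, is that the descending chain of nonempty affine subspaces
\[
V(y,b)\bigl(\tilde V(y)\bigr)\subset V(b),\qquad y\in I\cap[b,a),
\]
must stabilize, simply because $V(b)$ is finite-dimensional and a descending chain of nonempty affine subspaces of a finite-dimensional vector space stabilizes. I will then take $g_b$ to be any element of the (nonempty) intersection $\tilde V^{\bullet}(b)$; automatically $p_b(g_b)=f_b$.

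The remaining step is to propagate $g_b$ to a compatible family. The same stabilization yields an affine analogue of the one-step lifting Lemma \ref{lem:one step lifting lemma}: for $x_1<x_2$ in $I\cap[b,a)$, $V(x_2,x_1)$ carries $\tilde V^{\bullet}(x_2)$ onto $\tilde V^{\bullet}(x_1)$. With this in hand I will rerun the recursive construction of Lemma \ref{lem:compatible system of elements}(a) using the $\tilde V^{\bullet}$ in place of $V^{\bullet}$: choose an increasing sequence $b=x_0<x_1<x_2<\cdots\to a$ in $I$, recursively select $g_{x_{n+1}}\in\tilde V^{\bullet}(x_{n+1})$ with $V(x_{n+1},x_n)(g_{x_{n+1}})=g_{x_n}$, set $g_x:=V(x_n,x)(g_{x_n})$ for any $x_n\geq x$, and extend to any $x<b$ in $I$ by $g_x:=V(b,x)(g_b)$. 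By construction each $g_x$ lies in $\tilde V(x)$, so $p_x(g_x)=f_x$, and the compatibility $V(y,x)(g_y)=g_x$ is built into the recursion. The resulting family is the desired lift. I expect the main obstacle to be nothing conceptually new but rather the careful bookkeeping of this affine redo of the image-filtration machinery of Lemmas \ref{lem:one step lifting lemma} and \ref{lem:compatible system of elements}; this is precisely where pointwise finite-dimensionality enters, and it explains why $P_{a)}$ fails to be projective in the larger category $\RepAR$.
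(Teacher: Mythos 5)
Your proof is correct, and it takes a genuinely different route from the paper's. The paper reduces to showing every epimorphism $p:E\twoheadrightarrow P_I$ splits (implicitly via the pullback construction, which is available because $\ReppwfAR$ is abelian), then works exclusively with the source $E$: it identifies the smallest term $W$ in the (linear) image filtration of $E^\bullet(s_0)$ that still surjects onto $P_I^\bullet(s_0)$, shows $I\subset I_W$ so that $W\subset E^\bullet_I(s_0)$, picks a preimage $w$ of the generator $v_{s_0}$ inside $W$, and invokes Lemma \ref{lem:compatible system of elements} verbatim to propagate $w$ to a compatible family $\{w_x\}_{x\in I}$, which is the section; the $s_0=-\infty$ case is handled by replacing $E^\bullet$ with the colimit $E^\circ$. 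You instead verify the lifting property directly, replacing the linear image filtration of the source by the descending family of \emph{affine} fibers $\tilde V(y)=p_y^{-1}(f_y)$ pushed forward to a fixed $b\in I$, observing that a descending chain of nonempty affine subspaces of the finite-dimensional space $V(b)$ stabilizes, taking $g_b$ in the stable value, and rerunning the one-step-lifting recursion in the affine setting. Both proofs are valid. What your version buys: it avoids the pullback reduction entirely; it never leaves finite dimensions, so the $s_0=-\infty$ case requires no special colimit argument (the affine chain still lives inside $V(b)$, which is finite-dimensional, for any choice of base point $b\in I$); and it makes completely explicit that pointwise finite-dimensionality is used exactly once, in the stabilization of the affine chain, which cleanly explains the failure in $\RepAR$. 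What the paper's version buys: it is shorter because it reuses the already-proved Lemmas \ref{lem:support of image is support}, \ref{lem:one step lifting lemma}, and \ref{lem:compatible system of elements} as black boxes instead of redeveloping affine analogues of them. One small remark: when you pass from $w\in V^\bullet_I(s_0)$ with $p_{s_0}(w)=v_{s_0}$ to the conclusion that the propagated family is a section, the paper implicitly uses that $P_I(x,s_0)$ is an isomorphism for $x\in I$ to conclude $p_x(w_x)=v_x$; your approach gets $p_x(g_x)=f_x$ for free since you track the affine fibers at every point, not just at the base point.
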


\begin{proof}
We first assume that $s_0\in\R$.
To show that $P_I$ is projective it suffices to show that any epimorphism $p:E\to P_I$ has a section. Let $W\subset E^\bullet(s_0)$ be the smallest term in the image filtration of $E^\bullet(s_0)$ which maps onto $P_I^
\bullet(s_0)$.

Claim: $I_W$ contains $I$ and thus $W\subset E_I^\bullet(s_0)$.

Proof: For each $x\in I$, there is a $w\in E(x)$ so that $p_x(w)=v_x\in P_I(x)$. But then $p_{s_0}E^\bullet(x,s_0)(w)=P_I^\bullet(x,s_0)(v_x)=v_{s_0}\neq 0$. So, $W\subset E^\bullet(x,s_0)(E(x))$ which implies $x\in I_W$. Since this holds for all $x\in I$ we get that $I\subset I_W$.

By construction of $W$, there is a $w\in W\subset E_I^\bullet(s_0)$ so that $p(w)=v_{s_0}$. By Lemma \ref{lem:compatible system of elements} there are elements $w_x\in E(x)$ for all $x\in I$ so that $E(x,y)(w_x)=w_y$ for all $y\le x\in I$. Then, a section $s:P\to E$ is given by $s(v_x)=w_x$ for all $x\in I$.

If we instead assume $s_0=-\infty$ then above we replace $E^\bullet$ with $E^{\circ}$ and $P^\bullet$ with $P^{\circ}$ where appropriate.
By the universal property of colimits, the map on representations induces a map $E^{\circ}(-\infty)\to P^{\circ}_I(-\infty)$.
Then the rest of the proof holds as stated.
\end{proof}

\begin{lemma}\label{lem:infinite limit is kinfty} %[
Suppose $s_0=-\infty$ and $P$ is a pointwise finite-dimensional representation with support $(s_0,a)$, for $a\leq s_1$, or $(s_0,a]$, for $a< s_1$. %)
Either $P^{\circ}(s_0)$ is finite-dimensional or $P^{\circ}(s_0)\cong k^\infty$.
\end{lemma}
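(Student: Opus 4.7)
The plan is to analyze the image filtration of $P^\circ(s_0)$ from Definition \ref{def:image filtration}. Since $s_0 = -\infty$ and $P^\circ(s_0)$ is defined as the (filtered) colimit of the finite-dimensional vector spaces $P(x)$ for $x > s_0$ in the support, every element of $P^\circ(s_0)$ comes from some $P(x)$, so
\[
P^\circ(s_0) = \bigcup_{x \in \supp P} P^\circ(x, s_0)(P(x)),
\]
and each subspace appearing in this union is finite-dimensional as a quotient of $P(x)$. For $s_0 < y \le x$ in the support, the colimit relation $P^\circ(x, s_0) = P^\circ(y, s_0) \circ P(x, y)$ shows that $P^\circ(x, s_0)(P(x)) \subseteq P^\circ(y, s_0)(P(y))$, so the family is totally ordered by inclusion. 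Two nested finite-dimensional subspaces of the same dimension must coincide, so distinct terms in the image filtration have distinct dimensions in $\N$.

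Next I would set $D := \sup_{x} \dim P^\circ(x, s_0)(P(x)) \in \N \cup \{\infty\}$ and branch on $D$. If $D < \infty$, distinct terms in the filtration have distinct dimensions in $\{0, 1, \ldots, D\}$, so there are only finitely many of them; the unique maximum $W_{\max}$ contains all the others, hence equals the union, and $P^\circ(s_0) = W_{\max}$ is finite-dimensional.

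If $D = \infty$, I would extract a strictly increasing chain $W_1 \subsetneq W_2 \subsetneq \cdots$ of terms of the filtration with $\dim W_n \to \infty$. Any other term $U$ in the filtration satisfies $\dim U < \dim W_n$ for large $n$, and by the total ordering this forces $U \subseteq W_n$. Consequently $P^\circ(s_0) = \bigcup_n W_n$. Choosing a basis of $W_1$ and extending inductively to a basis of $W_{n+1}$ at each stage produces a countably infinite basis of $\bigcup_n W_n$, exhibiting $P^\circ(s_0) \cong k^\infty$.

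The main subtlety is verifying that the image filtration actually exhausts $P^\circ(s_0)$, which uses that a filtered colimit of vector spaces is the union of the images of its components; once this and the total ordering of the filtration (already packaged in the remark following Proposition \ref{prop:I_W}) are in hand, the dichotomy reduces to a routine argument about chains of finite-dimensional vector spaces.
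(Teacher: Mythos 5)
Your proof is correct and follows essentially the same route as the paper: both rest on the total ordering of the image filtration of $P^\circ(s_0)$, the observation that the filtered colimit is exhausted by the images $P^\circ(x,s_0)(P(x))$, and an inductive, filtration-respecting choice of basis in the infinite case to exhibit $P^\circ(s_0)\cong k^\infty$. Your version is a bit more explicit about the dichotomy (branching on $\sup_x \dim P^\circ(x,s_0)(P(x))$ rather than starting from boundedness of $\dim P(x)$) and about why the colimit equals the union of the images, but there is no substantive difference in method.
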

\begin{proof}
Since $P$ is pointwise finite-dimensional, if the dimension of $P(x)$ is bounded by some $n$ for all $x\in\R$ then $P^{\circ}(s_0)$ is also bounded by $n$.

Now suppose $P^{\circ}(s_0)$ is not finite-dimensional.
For each $i>0$, let $n_i=\dim P^{\circ}_{I_i}$.
Let $e_i\in k^\infty$ denote the unit vector with a 1 in the $i$th coordinate.
For a choice of basis of $P^{\circ}(s_0)$, we note that since each morphism is a monomorphism and the image filtration ($\circ *$) of $P^{\circ}(s_0)$ has a minimal element, we may inductively choose a basis on $P^{\circ}(s_0)$.
We do this by first choosing a basis of $P^\circ_{I_1}$, then completing it to a a basis of $P^\circ_{I_2}$ and so on.
Since each $P^\circ_{I_i}$ is finite-dimensional this is well defined.

Since we have a consistent choice of bases, map the chosen basis of each $P^{\circ}_{I_i}$ to the collection $\{e_i \}\subset k^\infty$ in a consistent way.
Since each $P^{\circ}_{I_i}\cong P(x_i)$ this induces a map $P^{\circ}(s_0)\to k^\infty$.

To see the map is surjective take any element $w$ of $k^\infty$; $w$ has finitely many nonzero coordinates.
Thus it is some linear combination of finitely many $e_j$'s.
Then there is a $P^{\circ}_{I_i}$ whose basis contains enough elements to surject on to the $e_j$'s.
Thus there is an element $v$ in $P_{I_i}$ such that $v\mapsto w$ and so there is an element $\tilde{v}\in P^{\circ}(s_0)$ that maps to $w$.
The map is injective since if $\tilde{v}\neq \tilde{v}'$ in $P^{\circ}(s_0)$ then there is a pair $v\neq v'$ in a $P^{\circ}_{I_i}$ such that $v\mapsto \tilde{v}$ and $v'\mapsto \tilde{v'}$.
We know $v$ and $v'$ map to different elements in $k^\infty$ so $\tilde{v}$ and $\tilde{v}'$ must also.
Therefore, $P^{\circ}(s_0)\cong k^\infty$.
\end{proof}
 
The following theorem will give a characterization of one sided projective objects in $\ReppwfAR$.

 \begin{theorem}\label{thm:characterization of one sided projectives}
 Let $s_0 \leq  a< s_1$ with $s_0$ a sink and $s_1$ the next sourse.
 Let $P$ be a pointwise finite-dimensional representation of $A_{\mathbb R}$ with $\supp P \subset [s_0,a]$. 
 \begin{itemize}
\item[(1)\ ] Then $P$ is projective in $\ReppwfAR$ if and only if all maps $P(x,s_0):P(x)\to P(s_0)$ are injective for all $x\in supp P$.
\item[(2)\ ]   Every projective representation in $\ReppwfAR$ with support in $[s_0,a]$ is a finite direct sum of representations of the forms $P_b$ and $P_{b)}$ for $s_0\le b\le a$.
\item[(2')] Every projective representation in $\ReppwfAR$ with support in $(s_0,a]$ (i.e., $s_0=-\infty$)\ is a possibly infinite direct sum of representations of the forms $P_b$ and $P_{b)}$ for $s_0< b\le a$.
\end{itemize} \end{theorem}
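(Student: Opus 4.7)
The plan is to treat the three parts together: first prove ``$\Leftarrow$'' of (1) together with the direct-sum forms in (2), (2') by an explicit decomposition argument, then derive ``$\Rightarrow$'' of (1) via a short exact sequence argument that reduces to a sub-claim about projectives with vanishing $s_0$-fiber.

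Assume $P \in \ReppwfAR$ has $\supp P \subseteq [s_0, a]$ and every $P(x, s_0)$ (or $P^\circ(x, s_0)$ when $s_0 = -\infty$) is injective. I take the image filtration of $P^\bullet(s_0)$ (respectively $P^\circ(s_0)$); by Proposition \ref{prop:I_W} its terms correspond to support intervals of the form $[s_0, b]$, $[s_0, b)$, $(s_0, b]$, or $(s_0, b)$. I choose a basis $\{e_j\}$ adapted to this filtration, extending outward from the smallest term, so that each $e_j$ has a uniquely determined maximal interval $I^{(j)}$ equal to that of the smallest filtration term containing $e_j$. By Lemma \ref{lem:compatible system of elements}, each $e_j$ lifts to a compatible system $\{e^{(j)}_x\}_{x \in I^{(j)}}$ in $P$, spanning a subrepresentation isomorphic to $P_{I^{(j)}} \in \{P_{b_j}, P_{b_j)}\}$. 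The resulting morphism $\phi : \bigoplus_j P_{I^{(j)}} \to P$ is an isomorphism: at $s_0$ by the basis choice, at $x \succ s_0$ with $\phi_x$ injective (since $\{e_j\}$ is a basis) and surjective (applying Lemma \ref{lem:support of image is support} to identify $P(x, s_0)(P(x))$ with a filtration term whose interval contains $x$, then recovering any $w \in P(x)$ from $P(x, s_0)(w)$ via the injectivity hypothesis). Each $P_{I^{(j)}}$ being projective by Proposition \ref{prop:one sided at a}, so is $P$. The sum is finite in (2) (since $P(s_0)$ is finite-dimensional) and possibly countably infinite in (2') (by Lemma \ref{lem:infinite limit is kinfty}).

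For ``$\Rightarrow$'' of (1) with $s_0 \in \R$, suppose $P$ is projective and set $W(y) := \ker P(y, s_0)$, a subrepresentation of $P$ with $W(s_0) = 0$ (the factorization $P(y, s_0) = P(y', s_0) \circ P(y, y')$ forces $P(y, y')$ to carry kernels into kernels). The quotient $P/W$ has injective maps to $s_0$ by construction, hence is projective by the first paragraph, so the short exact sequence $0 \to W \to P \to P/W \to 0$ splits and $W$ is a projective direct summand of $P$. To reach a contradiction from $W \neq 0$: for $y_1 \in \supp W$ with nonzero $v \in W(y_1)$, the adjoint map $P_{y_1} \to W$ from Lemma \ref{lem: PXc is adjoint to restriction to c} has image $V \subseteq W$ with $V(s_0) = 0$, and the epimorphism $P_{y_1} \twoheadrightarrow V$ fails to split because any candidate section is forced to be zero at $s_0$ (contradicting commutativity with $P_{y_1}(y_1, s_0) = 1_k$). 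One then leverages this non-splitting, combined with projectivity of $W$, to construct a nonzero class in $\Ext^1(W, V')$ for a suitable test object $V'$, contradicting $W$ being projective. Hence $W = 0$, all $P(y, s_0)$ are injective, and (2), (2') follow from the first paragraph.

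The main technical obstacle lies in this final step. A direct attempt using $V' = P_{s_0}$ succeeds for some $W$ (e.g.~$W = M_{(s_0, a]}$, where a constant family $\lambda_y = 1$ yields a non-trivial extension) but can fail for others (e.g.~$W = M_{[y_0, y_1]}$ with $y_0 > s_0$, where compatibility $\lambda_y = \lambda_{y'} \circ W(y, y')$ forces all twists to vanish because $W(y, y') = 0$ for $y' < y_0$). The cleanest resolution is to argue indecomposable-by-indecomposable: for any indecomposable summand of $W$ supported away from $s_0$, an explicit non-split epimorphism from some $P_b$ or $P_{b)}$ onto that summand witnesses non-projectivity, which then contradicts $W$ being a projective direct summand of $P$; this requires a careful inductive use of the image filtration structure inside $W$ at the anchor point $y_1$.
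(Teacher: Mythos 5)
Your ``$\Leftarrow$'' argument for (1) and your proofs of (2) and (2') essentially match the paper's: choose a basis of $P(s_0)$ (resp.\ $P^\circ(s_0)$) adapted to the image filtration, lift each basis vector to a compatible system via Lemma \ref{lem:compatible system of elements}, and use injectivity of the $P(x,s_0)$ together with Lemma \ref{lem:support of image is support} to show the lifted generators span pointwise. That part is fine.

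The ``$\Rightarrow$'' direction is where your proposal has a genuine gap, and you already sense it. You set $W(y)=\ker P(y,s_0)$, correctly observe $W$ is a subrepresentation with $W(s_0)=0$, that $P/W$ is projective by the ``$\Leftarrow$'' half, and that the resulting short exact sequence splits so $W$ is a projective direct summand. But then you need to derive a contradiction from $W\neq 0$, and your two candidate strategies both run into trouble. The $\Ext^1$ approach does not close (as your own examples illustrate). The ``indecomposable-by-indecomposable'' approach is worse than it looks: to argue about indecomposable summands of $W$ you need to know $W$ decomposes into indecomposables, but that is the decomposition theorem (Theorem \ref{thm:GeneralizedBarCode}), which in this paper is proved \emph{after} and \emph{using} the present theorem (via Lemma \ref{lem:lem A}, which invokes the ``$\Leftarrow$'' half to conclude a quotient is projective). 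So routing the argument through a decomposition of $W$ would be circular at this point in the development. Moreover, knowing that one particular epimorphism $P_{y_1}\twoheadrightarrow V$ onto a subrepresentation $V\subseteq W$ fails to split only shows that subobject $V$ is not projective, not that $W$ itself is not projective---subrepresentations of projectives are not automatically projective without first establishing hereditarity, which again depends on the very classification you are proving.

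The paper sidesteps all of this with a short, direct lifting obstruction: if $P(x_0,s_0)$ has a nontrivial kernel, define the truncation $Q$ (equal to $P$ above $x_0$ and zero below), and the ``flattened'' cover $\widetilde Q$ (equal to $P$ above $x_0$ and constantly $P(x_0)$ below). Any lift $\gamma:P\to\widetilde Q$ of the quotient map $\pi:P\to Q$ through $p:\widetilde Q\to Q$ is forced to be the identity at $x_0$, whence $\gamma_{s_0}\circ P(x_0,s_0)=\widetilde Q(x_0,s_0)\circ\gamma_{x_0}=\mathrm{id}$, contradicting non-injectivity of $P(x_0,s_0)$. This exhibits a single, explicit lifting problem that $P$ fails, with no appeal to decompositions, $\Ext$ computations, or hereditarity. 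You should replace your ``$\Rightarrow$'' argument with a construction of this kind.
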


 \begin{proof}%[Proof of Theorem \ref{thm:characterization of one sided projectives}] 
 When $a=s_0$, statements (1) are (2) are trivially true and statement (2') does not apply.
  
 (1) Suppose 
 %first 
 that there is some $x_0\in 
 %J= 
 [b,a]\subset[s_0,a]$ 
 so that $P(x_0,s_0):P(x_0)\to P(s_0)$ is not injective. Then we will show that $P$ is not projective. Indeed consider the quotient object $Q$ given $Q(x)=P(x)$ for all $x\ge x_0$ and $Q(x)=0$ for all $x<x_0$. We have an epimorphism $\pi: P\to Q$. Let $\widetilde Q$ be the representation given by $\widetilde Q(x)=P(x)$ for $x\ge x_0$ and $\widetilde Q(x)=P(x_0)$ for all $x\le x_0$ with $\widetilde Q(y,x)=Id$, 
 %being 
 the identity,
  when $x,y\le x_0$ and $\widetilde Q(y,x)=P(y,x_0)$ when $x\le x_0<y$. Let $p:\widetilde Q\to Q$ be the projection map. 
Claim: the quotient map $\pi: P\to Q$ does not lift to $\widetilde Q$, i.e. there is no $\gamma: P\to \widetilde Q$ such that $p\circ\gamma=\pi$.
Proof of claim: Since $\pi_{x_0}=Id: P(x_0)=Q(x_0)$ and $p_{x_0}=Id: \widetilde Q(x_0)\to Q(x_0)$ we would have $\gamma_{x_0}=Id$. But that gives a contradiction to the basic property of maps between representations: $\gamma_{s_0}\circ P(x_0,s_0)=\widetilde Q(x_0,s_0)\circ\gamma_{x_0}=Id$, but $\gamma_{s_0}\circ P(x_0,s_0)$ is not injective by assumption.
 Therefore, $P$ is not projective.

Conversely, suppose that all morphisms $P(x,s_0)$ are monomorphisms. Choose a basis $B$ for $P(s_0)$ compatible with the image filtration. Thus, a subset $B_i$ of $B$ is a basis for each subspace $P_{J_i}(s_0)$ in the image filtration of $P(s_0)$ where 
%$J_i=supp(P_{J_i}(s_0))$ 
$J_i=I_{P_{J_i}(s_0)}$ 
are ordered by inclusion: $J_1\subsetneq J_2\subsetneq \cdots\subsetneq J_n$. Then $P_{J_1}(s_0)\supsetneq P_{J_2}(s_0)\supsetneq \cdots \supsetneq P_{J_n}(s_0)$.
 
(2) By Lemma \ref{lem:compatible system of elements}, every $v\in B_i-B_{i+1}$ lifts to a compatible system of elements $v_x\in P(x)$ for all $x\in J_i$. By Lemma \ref{lem:support of image is support}, $P(x,s_0)(P(x))= P_{J_i}(s_0)$ for all $x\in J_i-J_{i-1}$ (where $J_0=\emptyset$). Since $P(x,s_0)$ is a monomorphism, the liftings $v_x\in P(x)$ for all $v\in B_i$ form a basis for $P(x)$ for all $x\in J_i-J_{i-1}$. For each $v\in B_i$, the lifting $v_x$ of $v$ generate a pointwise one-dimensional subrepresentation $Q_v$ of $P$ with support in $J_i$, i.e. $P_v$ has the form $P_{b}$ or $P_{b)}$ depending on whether $J_i=[s_0,b]$ or $[s_0,b)$. The $Q_v$, for $v\in B$, are disjoint and generate all of $P(x)$ for every $x\in [s_0,a]$. Thus $P$ is a direct sum of the $Q_v$ as claimed.

Below is an example of such a decomposition for (2).
\begin{displaymath}\begin{tikzpicture}
\draw[thick] (0,2) -- (0.4,2);
\draw[thick] (0,1.5) -- (1.2,1.5);
\draw[thick] (0,1) -- (3.5,1);
\draw[thick] (0,.5) -- (4,.5);
\draw[thick] (0,0) -- (5,0);
\foreach \x in {0,...,4}
	\filldraw (0, \x / 2) circle[radius=.5mm];
\filldraw[fill=white] (0.4,2) circle[radius=.6mm];
\filldraw (1.2,1.5) circle[radius=.5mm];
\filldraw (3.5,1) circle[radius=.5mm];
\filldraw[fill=white] (4,0.5) circle[radius=.6mm];
\filldraw (5,0) circle[radius=.5mm]; %((
\draw (0,2) node[anchor=east] {$[s_0,b_1)=\supp Q_{v_1}$};
\draw (0,1.5) node[anchor=east] {$[s_0,b_2]=\supp Q_{v_2}$};
\draw (0,1) node[anchor=east] {$[s_0,b_3]=\supp Q_{v_3}$};
\draw (0,0.5) node[anchor=east] {$[s_0,b_4)=\supp Q_{v_4}$};
\draw (0,0) node[anchor=east] {$[s_0,b_5] = \supp Q_{v_5}$}; %]]
\end{tikzpicture}\end{displaymath}

With the exception of choosing a basis, we may apply all of the argument for statement (2) to statement (2').
If $P^{\circ}(s_0)$ is finite-dimensional we get a basis and apply the argument for (2).
By Lemma \ref{lem:infinite limit is kinfty}, if $P^{\circ}(s_0)$ is infinite-dimensional then it is isomorphic to $k^\infty$ and by the proof of the same lemma we have a basis that respects the filtration.
We then apply the argument for (2).
\end{proof}

\begin{example}\label{xmp:no projective cover}
Let $A_\R$ have the straight descending orientation and for each positive integer $n$ let $V_n$ be the following representation:
\begin{align*}
V_n(x) &= \left\{\begin{array}{ll} k & n\leq x \\ 0 & \text{otherwise} \end{array} \right.&
V_n(x,y) &= \left\{\begin{array}{ll} 1_k & n\leq y\leq x \\ 0 & \text{otherwise} \end{array} \right.
\end{align*}
Using Theorem \ref{thm:characterization of one sided projectives} we see that the projective cover of each $V_n$ is the projective indecomposable with support $\R = (-\infty,+\infty)$.

Note that $V=\bigoplus V_n$ is still pointwise finite.
One can check it is isomorphic to the representation $W$ (item (3) in Example \ref{xmp:boundedreps}).
However, the projective cover is infinitely many copies of the indecomposable projective with support $(-\infty,+\infty)$, which is not pointwise finite-dimensional.
Therefore, this rather tame example does not have a projective cover in $\ReppwfAR$. %[

However, the dually constructed representation $V'$ (each $V'_n$ has support $(-\infty,n]$) is its own projective cover by Theorem \ref{thm:characterization of one sided projectives} and so does have a projective cover. %)
While $V$ and $V'$ exist in $\ReppwfAR$, neither exists in $\RepbAR$.
So, this type of asymmetry does not happen in $\RepbAR$.
\end{example}

\subsection{Sufficient Conditions for Indecomposables}
Here we give sufficient conditions for %representations in $Rep^{pwf}_k(A_{\mathbb R})$, i.e. 
pointwise finite-dimensional representations to be indecomposable. In Section \ref{sec:thetheorem} we will show that these conditions are also necessary. 

\begin{proposition} \label{prop:sufficientProp} Let $V$ be a representation of $A_{\mathbb R}$ such that
\begin{enumerate}
\item $dimV(x)\leq 1$ for all $x\in \mathbb R$,
\item if $V(x)\neq 0\neq V(z)$ and $x\leq y\leq z$  in $\mathbb R$ then $V(y)\neq 0$, and
\item if $V(x)\neq 0\neq V(y)$ and  $x\preceq y$ then $V(y,x)$ is an isomorphism.
\end{enumerate}
Then $V$ is indecomposable.
\end{proposition}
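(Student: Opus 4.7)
The plan is to compute the endomorphism ring and show that $\End(V) \cong k$. Since $k$ has only the idempotents $0$ and $1$, this forces $V$ to be indecomposable: a nontrivial direct sum decomposition would provide a nontrivial idempotent endomorphism via projection.

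By hypothesis (1), any $f \in \End(V)$ is recorded by a family $(\lambda_x)_{x\in\R}$ of scalars: for each $x$ with $V(x) \neq 0$, the map $f(x):V(x)\to V(x)$ is multiplication by some $\lambda_x \in k$, and $\lambda_x = 0$ when $V(x)=0$. For $y \preceq x$ with both $V(x), V(y)$ nonzero, the naturality square $f(y)\circ V(x,y) = V(x,y)\circ f(x)$ combined with hypothesis (3) (which says $V(x,y)$ is an isomorphism) immediately forces $\lambda_x = \lambda_y$.

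The remaining step is to propagate this scalar equality across all of $\supp V$. By hypothesis (2), $\supp V$ is a $\leq$-interval of $\R$. Given any two points $x<y$ in $\supp V$, the compact interval $[x,y]$ meets $S$ in at most finitely many points $s_{n_1}<\cdots<s_{n_m}$ (discreteness of $S$), each of which lies in $\supp V$ again by (2). The refined chain $x = x_0 < s_{n_1} < \cdots < s_{n_m} < x_{m+1} = y$ has each consecutive pair lying within a single maximal piece $[s_j, s_{j+1}]$, on which the order $\preceq$ is totally ordered (it either agrees with $\leq$ or its opposite, per Definition \ref{def:AR}). Hence every consecutive pair is $\preceq$-comparable, and applying the scalar equality of the previous paragraph along each link gives $\lambda_x = \lambda_y$. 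So $\lambda_x$ takes a single common value $\lambda$ throughout $\supp V$, which means $f = \lambda \cdot \id_V$; this shows $\End(V) = k$ and completes the proof.

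The only place needing care is the chain step: one must verify that consecutive members of the chain really are $\preceq$-comparable and really lie in $\supp V$, but the former is immediate from the structure of $\preceq$ on the pieces $[s_j, s_{j+1}]$ (where $\preceq$ is total), and the latter follows from (2) together with the fact that each interior $x_i$ is either a sink or a source sandwiched between $x$ and $y$. No accumulation issues arise because $S$ has no accumulation points, so the chain is automatically finite.
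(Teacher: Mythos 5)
Your proof is correct, but it takes a genuinely different route from the paper's. The paper argues by contradiction: it assumes $V = W_1 \oplus W_2$ with both summands nonzero, picks $x_1 \in \supp W_1$ and $x_2 \in \supp W_2$, then (Claims 1--2) reduces to a pair with no element of $S$ strictly between them so that $x_1,x_2$ are $\preceq$-comparable, and finally derives a contradiction from the naturality square of the projection $V \to W_1$ (which would force $W_1(x_2) \neq 0$). It only afterwards, and separately, proves the Schurian property $\End(V) \cong k$ for indecomposable pointwise one-dimensional representations (Lemma \ref{lem:V is schurian}). You instead compute $\End(V) \cong k$ directly from hypotheses (1)--(3) --- using hypothesis (3) to force local scalar equality and the discreteness of $S$ to propagate that scalar along a finite chain through $\supp V$ --- and deduce indecomposability from the absence of nontrivial idempotents. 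Your approach is more conceptual and unifies the proposition with Lemma \ref{lem:V is schurian} in one stroke; in fact it proves that lemma's conclusion under hypotheses (1)--(3) rather than under the assumption of indecomposability. The paper's contradiction argument is more elementary (no endomorphism-ring reasoning needed) but its "reduce the number of sinks/sources between $x_1$ and $x_2$" induction in Claim 2 is doing the same work as your chain-propagation step. One small point to be explicit about: the conclusion $\End(V) \cong k$ tacitly requires $V \neq 0$ (otherwise $\End(V) = 0$), which is of course also needed for indecomposability to be a meaningful assertion; and if one of the endpoints $x$ or $y$ happens to lie in $S$ the strict inequalities in your chain should be relaxed to allow coincidence with the nearest $s_{n_i}$, though this is purely notational.
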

% Proof of Proposition 1.11. 
%This could be proved either by assuming that $V$ is decomposable or by assuming that $V$ has a nonzero endomorphism which is not an isomorphism.\bigskip
 \begin{proof}
Suppose $V$ is not indecomposable for contradiction. Then $V\cong W_1\oplus W_2$ with $W_1\neq 0\neq W_2$.
%Claim 1: 
Then $supp \,W_1\cap supp \,W_2 = \emptyset$ and $supp \,W_1\cup supp \,W_2 = supp\,V.$ %Reason: $dimV(x)\leq 1$ and $V\cong W_1\oplus W_2$.\\
%Claim 1: The supports of $W_1$, $W_2$ are disjoint and their union is the support of $V$.
%{(Pf: If the supports of $W_1$, $W_2$ are not disjoint, there is an $x\in\mathbb R$ so that $W_1(x)\neq0$ and $W_2(x)\neq0$. Then $V(x)=W_1(x)\oplus W_2(x)$ has dimension $\ge2$ contradicting assumption (1). Since $V(x)\cong W_1(x)\oplus W_2(x)$, any $x$ in the support of $V$ must be in the support of either $W_1$ or $W_2$. So, the union of the supports of $W_1$ and $W_2$ is equal to the support of $V$.)}
%\smallskip
Since $W_1,W_2\neq 0$  there exist $x_1\in supp\, W_1$ and $x_2\in supp\,W_2$. %of $V$ so that $W_2(x_1)=0$ ($W_1(x)\neq 0$ by Claim 1) and $W_1(x)=0$ ($W_2(y)\neq0$ by Claim 1). 
By symmetry we may assume $x_1<x_2$.

\smallskip
Claim 1: There are only finitely many elements of $S$ in the open interval $(x_1,x_2)$.\\
Pf: This follows from the fact that $(x_1,x_2)$ is a bounded interval, i.e., $[x_1,x_2]$ is compact. If $S\cap [x_1,x_2]$ were infinite, it would contain a converging sequence (any infinite subset of a compact set contains a converging sequence). By definition, $S$ does not contain a converging sequence. So, $S\cap [x_1,x_2]$ is finite. A fortiori, $S\cap (x_1,x_2)$ is finite.

\smallskip
Claim 2: There exist $x_1\in supp\, W_1$ and $x_2\in supp\,W_2$ such that $S\cap (x_1,x_2)=\emptyset.$\\
Pf: Let $n=\#\{S\cap (x_1,x_2)\}$. If $n\ge1$ we will find another pair $x_1'<x_2'$ in the respective supports of $W_1,W_2$ so that $\#\{S\cap (x_1',x_2')\}<\#\{S\cap (x_1,x_2)\}=n$. This will imply that $n=0$.

To find this second pair $x_1',x_2'$ choose any element $s_k$ in $S\cap (x_1,x_2)$ which is nonempty by assumption that $n\ge1$. Then $s_k$ is in the support of $W_1$ or $W_2$. In the first case, $x_1'=s_k,x_2'=x_2$ gives the desired pair. Indeed, in this case, $S\cap (x_1',x_2')\subset S\cap (x_1,x_2)$ since $s_k\in S\cap (x_1,x_2)$ but $s_k\notin S\cap (x_1',x_2')$. Also, $x_1'=s_k$ is in the support of $W_1$ by assumption and $x_2'=x_2$ is in the support of $W_2$. The second case is similar. In both cases, the value of $n$ can be reduced if it is positive. So, the minimal value of $n$ is 0.

%\bigskip
\smallskip
By Claim 2 we may assume there are no elements of $S$ between $x_1$ and $x_2$, i.e. the $\prec$ orientation of $\mathbb R$ is constant in the closed interval $[x_1,x_2]$ and either $V(x_2,x_1)$ or $V(x_1,x_2)$ is an isomorphism. In the first case, we consider the projection $V\to W_1$ and in the second case we consider the other projection $f:V\to W_2$. By symmetry, we may take the first case, i.e. $V(x_2,x_1)$ is an isomorphism. Then we have the following commuting diagram:
\[
%\xymatrixrowsep{10pt}\xymatrixcolsep{10pt}
\xymatrix{%begin xy matrix
V(x_1)\ar[d]^{f_{x_1}}_{\cong}&
	V(x_2)\neq 0\ar[d]^{f_{x_2}}\ar[l]_{V(x_2,x_1)}^{\cong}\\
W_1(x_1) & 
	W_1(x_2)=0 \ar[l]
	}%end xy matrix
\]
Since $x_1\in supp\,W_1$, it follows that $f_{x_1}:V(x_1) \to W_1(x_1)$ is an isomorphism. %Also $V(x_2,x_1)$ is an isomorphism in the second case. 
But $W_1(x_2)=0$ since $x_2 \in supp\, W_2$ and $x_2 \notin supp\, W_1$. The commutativity of the diagram then gives a contradiction. Thus $V$ is indecomposable.
\end{proof}

\begin{definition}\label{def: MI}
For any interval $I$ in $\R$ let $M_I$ be the representation of $A_\R$ given as follows.
\begin{align*}
M_I(x) &= \left\{\begin{array}{ll} k & x\in I
 \\ 0 & \text{otherwise} \end{array}
 \right. 
 & 
M_I(x,y) &= \left\{\begin{array}{ll} 1_k & y\preceq x \text{ and }x,y\in I \\ 0 & \text{otherwise} \end{array}\right.
\end{align*} 
The conditions of Proposition \ref{prop:sufficientProp} are satisfied immediately. So, $M_I$ is indecomposable.
If a representation $V\cong M_I$ we call $V$ an \underline{interval indecomposable} or \underline{interval indecomposable representation}.
\end{definition}

\begin{cor} Let $V$ be an indecomposable representation which is pointwise one-dimensional (satisfies the conditions of Proposition \ref{prop:sufficientProp}). Let $J\subseteq supp V$ be a connected subset and let $V_J$ be the restriction of $V$ to $J$, i.e. $V_J(x)=V(x) =k$ for all $x\in J$ and $V_J(x)=0$ for all $x\notin J$, and $V_J(y,x)=V(y,x)$ for all $x,y\in J$. Then $V_J$ is an indecomposable representation.
\end{cor}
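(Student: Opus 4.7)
The plan is to verify that $V_J$ satisfies the three hypotheses of Proposition \ref{prop:sufficientProp} and then invoke that proposition to conclude $V_J$ is indecomposable. Since $V_J$ is obtained from $V$ by zeroing out the vector spaces outside $J$ (and the maps whose source or target lies outside $J$), each condition should transfer from $V$ to $V_J$ with only one subtle point, namely the use of connectedness of $J$ in condition (2).

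For condition (1), note that $V_J(x) = V(x)$ when $x \in J$ and $V_J(x) = 0$ otherwise, so $\dim V_J(x) \le \dim V(x) \le 1$ for all $x \in \R$. For condition (3), suppose $V_J(x) \neq 0 \neq V_J(y)$ with $x \preceq y$; then $x,y \in J$, so $V_J(y,x) = V(y,x)$, which is an isomorphism by condition (3) applied to $V$.

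The only step that is not purely formal is condition (2). Suppose $V_J(x) \neq 0 \neq V_J(z)$ and $x \le y \le z$ in $\R$. Then $x,z \in J$, and since $J$ is a connected subset of $\R$ (hence an interval) containing $x$ and $z$, the intermediate point $y$ lies in $J$ as well. Applying condition (2) to $V$, we have $V(y) \neq 0$, so $V_J(y) = V(y) \neq 0$.

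Once all three conditions are verified, Proposition \ref{prop:sufficientProp} gives the indecomposability of $V_J$ immediately. The only real content is the appeal to connectedness of $J$ to ensure the order-interval hypothesis of condition (2) stays inside $J$; everything else is a direct translation of hypotheses on $V$ to hypotheses on $V_J$.
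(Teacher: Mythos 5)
Your proposal is correct, and it follows the same overall strategy as the paper: verify each of the three hypotheses of Proposition \ref{prop:sufficientProp} for $V_J$ and then apply the proposition. The one place where you diverge is condition (3). You transfer it directly: since $x,y\in J\subseteq\supp V$ implies $V(x)\neq 0\neq V(y)$, and $V_J(y,x)=V(y,x)$ by definition, condition (3) on $V$ hands you condition (3) on $V_J$ immediately. This is valid given the stated hypothesis that $V$ satisfies all three conditions of the proposition. The paper instead re-derives condition (3) for $V_J$ from the \emph{indecomposability} of $V$: supposing $V(y,x)=0$ for some $x\preceq y$ in $J$, it constructs the set $I=\{t : x\prec t\preceq y,\ V(y,t)\neq 0\}$, shows $(\supp V)\setminus I$ disconnects, and produces a nontrivial split summand $V_{J_1}$ of $V$, a contradiction. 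The paper's argument is longer but more self-contained --- it uses indecomposability rather than the literal hypothesis (3), which effectively re-proves a piece of the ``necessary'' direction handled later. Your version is shorter and stays strictly within the framework of transferring hypotheses, which is exactly what the proposition's format invites. Both are complete proofs.

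One small streamlining you could make: for condition (2), once you know $y\in J\subseteq\supp V$ you already have $V(y)\neq 0$ directly from the definition of support; there is no need to invoke condition (2) for $V$.
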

\begin{proof} We will show that conditions (1), (2) and (3) of Proposition \ref{prop:sufficientProp} are satisfied by the representation $V_J$.

(1) By definition of $V_J$ it follows that $\dim_kV_J(x)\leq 1$.

(2) This follows since $J$ is connected subset of $supp\,V$. 

(3) Suppose there is $x \preceq y$ with $x,y\in J$ such that $V_J(y,x)=V(y,x)$ not an isomprhism. Since $\dim_kV_J(x)\leq 1$, this is equivalent to $V_J(y,x)=0$.\\
Let $I=\{t\ |\ x\prec t\preceq y \text{ such that } V(y,t)\neq 0\}$. Then $x\in (supp V)\backslash I=J_1\cup J_2$ where $J_1\cap J_2=\emptyset$ and we may assume $x\in J_1$. Then $V_{J_1}$ is a subrepresentation of $V$ but is also a quotient of $V$ since the map $\pi: V\to V_{J_1}$ defined as $\pi_x=I\!d_{V_J(x)}$ for $x\in J_1$ and $\pi_x=0$ for $x\notin J_1$ is a representation homomorphism using the fact that $V(t_2,t_1)=0$ for all $t_2\in (supp V)\backslash J_1$ and all $t_1\in J_1$. Actually $\pi$ is a splitting for the inclusion $V_{J_1}\to V$,  contradicting the assumption that $V$ is indecomposable. Therefore (3) holds for $V_J$.

So by Proposition \ref{prop:sufficientProp} it follows that $V_J$ is indecomposable.
\end{proof}

\subsection{Filtrations}\label{sec:filtrations}
In this section will provide some lemmas necessary for Section \ref{sec:thetheorem}.
In both this section and in Section \ref{sec:thetheorem} we will be using notation $\Hom(\_\,,\_)$ for $\Hom_{\ReppwfAR}(\_\,,\_)$ and $\End(\_)$ for $\End_{\ReppwfAR}(\_)$  where $\ReppwfAR$ is the full subcategory of $\RepAR$  whose objects are all pointwise finite representations of $A_{\mathbb R}$.

\begin{lemma}\label{lem:V is schurian}
Let $V$ be an indecomposable pointwise one-dimensional representation. %, i.e. one given by Proposition 1.11. 
Then the endomorphism ring of $V$ is the field $k$.
\end{lemma}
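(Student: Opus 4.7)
The plan is to show that any endomorphism $f \colon V \to V$ acts as multiplication by a single scalar on every stalk of $V$. Since $V$ is pointwise one-dimensional, $\End_k(V(x)) \cong k$ for each $x \in \supp V$, so $f(x)$ is multiplication by some scalar $\lambda_x \in k$; for $x \notin \supp V$ the map $f(x)$ is forced to be zero.

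The compatibility of $f$ with the transition maps then pins down how the scalars $\lambda_x$ relate. For any $x, y \in \supp V$ with $y \preceq x$, Proposition \ref{prop:sufficientProp} (whose three hypotheses $V$ satisfies by assumption) tells us that $V(x,y)$ is an isomorphism, and the commuting square
\[
V(x,y) \circ f(x) = f(y) \circ V(x,y)
\]
forces $\lambda_x = \lambda_y$ whenever $x, y$ are $\preceq$-comparable points of $\supp V$.

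It remains to show $\lambda_x$ is independent of $x \in \supp V$. By condition (2) of Proposition \ref{prop:sufficientProp}, $\supp V$ is connected, so for any two points $x_0 < x_n$ of the support the closed interval $[x_0, x_n]$ lies entirely in $\supp V$. Because $S$ has no accumulation points, $[x_0, x_n] \cap S$ is finite (the compactness argument used in Claim 1 of the proof of Proposition \ref{prop:sufficientProp}); inserting these finitely many elements of $S$ yields a chain $x_0 = y_0 < y_1 < \cdots < y_m = x_n$ such that no element of $S$ lies strictly between consecutive $y_i$. On each $[y_i, y_{i+1}]$ the $\preceq$ orientation is therefore constant, so $y_i$ and $y_{i+1}$ are $\preceq$-comparable, whence $\lambda_{y_i} = \lambda_{y_{i+1}}$ by the preceding paragraph. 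Chaining these equalities gives a single scalar $\lambda \in k$ with $f = \lambda \cdot \id_V$, and sending $f \mapsto \lambda$ visibly defines a $k$-algebra isomorphism $\End(V) \cong k$.

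I expect no real obstacle: the only subtle point is ensuring the chain $y_0, \ldots, y_m$ is drawn from $\supp V$, which is immediate once we observe $[x_0, x_n] \subset \supp V$, and that we can indeed reduce the problem to comparing scalars across subintervals on which the orientation does not switch.
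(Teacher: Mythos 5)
Your proof is correct and takes essentially the same route as the paper: both propagate the scalar action of an endomorphism outward from a chosen point of $\supp V$, using that transition maps between $\preceq$-comparable support points are isomorphisms and that only finitely many sinks and sources lie between any two points. (One small caveat: conditions (2) and (3) of Proposition \ref{prop:sufficientProp} are consequences of indecomposability rather than literal hypotheses of the lemma, so your parenthetical ``by assumption'' is slightly loose --- but the paper's own proof invokes them just as silently.)
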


\begin{proof}
Let $x_0\in \supp V$.
By definition, $V(x_0)\cong k$.
Choose a morphism $f(x_0):V(x_0)\to V(x_0)$.
\underline{Claim}: if $f(x_0)\neq 0$ this determines an isomorphism $V\stackrel{\cong}{\to} V$.

Since $V(x_0)\cong k$, $f(x_0)$ is an isomorphism.
If $y\in \supp V$ such that $y\preceq x_0$ then $V(x_0,y)$ is an isomorphism.
So, for all $y\preceq x_0$ in $\supp V$, define \begin{displaymath} f(y):=V(x_0,y)\circ f(x_0) \circ (V(x_0,y))^{-1}.\end{displaymath}
Dually, for all $y\in\supp V$ such that $x_0\preceq y$ define \begin{displaymath} f(y):=(V(y,x_0))^{-1}\circ f(x_0) \circ V(y,x_0).\end{displaymath}
If there are no sinks and sources in $\supp V$, except possibly the endpoints, we have an induced morphism $V\to V$ such that $f(x)$ is an isomorphism for all $x\in \R$ (by setting $f(x)=0$ when $x\notin \supp V$).
By Proposition \ref{prop:isomorphisms} $f$ is an isomorphism.
Now, suppose there is a sink or source in the interior of $\supp V$.

Let $s_n$ be a source such that $x_0\preceq s_n$.
By the paragraph above we already have $f(s_n)$.
For each $y\preceq s_n$ for which we do not yet have an $f(y)$ we can use the technique above and define it without making choices.
By a dual argument if $s_n\preceq x_0$ we can define $f(y)$ for all $y$ such that $s_n\preceq y$.
Note that between any real number $x$ and $x_0$ there are only finitely many sinks and sources between $x$ and $x_0$ in the total oder of $\R$.
By repeated use of this technique, we have an induced isomorphism $f(x):V(x)\stackrel{\cong}{\to}V(x)$ for all $x\in \R$.
Thus, we have an induced isomorphism $f:V\stackrel{\cong}{\to} V$.
If $g:V\to V$ is a nonzero morphism then $g(x)$ is nonzero as before is an isomorphism that determines the rest of $g$.
Then $g(x)$ and $f(x)$ are multiplication by nonzero scalars and there exists $t\in k$ such that $t g(x)=f(x)$.
Therefore, $\End(V)\cong k$.
%Previous argument:
%If $\#\{supp\,V\}=1$, i.e. $supp\,V=\{x\}$ then $V(x)=k$ and $\End%_{Rep^{pwf}_k(A_{\mathbb R})}
%(V)=\End_k(V(x))=k$. So we may assume $\#\{supp\,V\}\geq2$.
%
%Let $f: V\to V$ be an endomorphism which is not a scalar multiple of the identity $I\!d_V$. So $f\neq 0$ and  there exist $x,y$ in  $supp\, V$ so that $f_x=a(I\!d_{V(x)}):V(x)\to V(x)$ and $f_y=b(I\!d_{V(y)}):V(y)\to V(y)$ where $a\neq b\in k$.
%
% Let $g=f-a(I\!d_{V}): V\to V$. Then $g_x=f_x-a(I\!d_{V(x)})=a(I\!d_{V(x)})-a(I\!d_{V(x)})=0$. Hence $0\neq V(x)=\ker(g_x)\subseteq \ker(g)$. Therefore $\ker(g)\neq 0$ and also $(\cup\ker(g^n))\neq 0$.
% 
% Claim: $\im(g)\neq 0$. Consider $g_y=f_y-a(I\!d_{V(y)})=b(I\!d_{V(y)})-a(I\!d_{V(y)})=(b-a)(I\!d_{V(y)})$.
% Since $(b-a)\neq 0$ and $\dim_k(V(y))=1$ it follows that $\im(g_y)=V(y)$ and therefore $\im(g)\neq 0$ and also $(\cap\im(g^n))\neq 0$. 
% 
% By Fitting Lemma $V\cong (\cup\ker(g^n))\oplus (\cap\im(g^n))$ with both summands being $\neq 0$. 
%% Here $n=1$. 
% This gives a contradiction to the assumption that $V$ is indecomposable. Therefore every endomorphism of $V$ is a scalar multiple of the identity and hence $\End(V)=k$.
\end{proof}
 
 \begin{theorem} \label{thm:iso-indecomps}Let $V$ and $V'$ be two indecomposable pointwise one-dimensional representations of $A_{\mathbb R}$.
Then $\supp V=\supp V'$ if and only if $V\cong V'$.
 \end{theorem}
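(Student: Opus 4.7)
The ``only if'' direction is exactly Proposition \ref{prop:isomorphic implies same support}, so the real content is the converse. My plan is to construct an explicit isomorphism $f : V \to V'$ by mimicking the propagation-from-a-basepoint argument used in the proof of Lemma \ref{lem:V is schurian}, but now between two distinct pointwise one-dimensional indecomposables that happen to share a common support.

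Fix a basepoint $x_0 \in \supp V = \supp V'$. Since $V(x_0)$ and $V'(x_0)$ are both one-dimensional, any nonzero $k$-linear map $f(x_0) : V(x_0) \to V'(x_0)$ is automatically an isomorphism. For any $y \in \supp V$ that is $\preceq$-comparable to $x_0$, define
\[
f(y) := V'(x_0,y) \circ f(x_0) \circ V(x_0,y)^{-1} \quad \text{if } y \preceq x_0,
\qquad
f(y) := V'(y,x_0)^{-1} \circ f(x_0) \circ V(y,x_0) \quad \text{if } x_0 \preceq y.
\]
These are well-defined because condition (3) of Proposition \ref{prop:sufficientProp} forces every structure map in $V$ and in $V'$ between two points of the common support to be an isomorphism.

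To extend $f(y)$ to \emph{all} $y \in \supp V$, I use that $\supp V$ is connected (condition (2)) and that $S$ has no accumulation points. Hence for each $y \in \supp V$, the closed interval in $\R$ with endpoints $x_0$ and $y$ contains only finitely many sinks/sources $s_{n_1} < \cdots < s_{n_r}$ of $A_\R$, and all of them lie in $\supp V$ by connectedness of the support. These points cut the segment into finitely many consecutive $\preceq$-monotone pieces. Starting from $x_0$, I propagate $f$ across these pieces one at a time: at each sink $s_{n_j}$ I have $s_{n_j} \preceq x$ for $x$ in the adjacent piece, so the formula $f(x) := V'(x,s_{n_j})^{-1} \circ f(s_{n_j}) \circ V(x,s_{n_j})$ applies (with the dual formula at a source). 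Setting $f(y)=0$ for $y \notin \supp V$ completes the construction, and Proposition \ref{prop:isomorphisms} then gives the isomorphism once commutativity of all required squares is verified.

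The main obstacle is checking commutativity of the squares $V'(x,y) \circ f(x) = f(y) \circ V(x,y)$ for every $y \preceq x$ in $\supp V$, not just within one monotone piece. The key observation that dissolves this obstacle is that $y \preceq x$ already forces $x$ and $y$ to lie in the same $\preceq$-monotone piece of $\supp V$: any sink or source strictly between them in the usual order of $\R$ would destroy the $\preceq$-comparability of $x$ and $y$. Thus only intra-piece squares need to be checked, and for these the defining formula above makes commutativity immediate. The remaining cases ($y \not\preceq x$, or at least one of $x,y$ outside $\supp V$) are vacuous.
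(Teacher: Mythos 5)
Your proof is correct and follows essentially the same strategy as the paper: the paper's own proof is a one-liner that fixes an isomorphism $f(x_0):V(x_0)\to V'(x_0)$ and says to "apply the argument from Lemma~\ref{lem:V is schurian}," which is precisely the basepoint-propagation argument you spell out. Your explicit observation that $y\preceq x$ confines both points to a single $\preceq$-monotone piece (since an intermediate element of $S$ would destroy $\preceq$-comparability) is the reason the commutativity check localizes and is a useful clarification the paper leaves implicit.
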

 \begin{proof}
We first assume $\supp V=\supp V'$.
Let $x_0\in \supp V=\supp V'$.
By definition, $V(x_0)\cong k \cong V'(x_0)$.
Choose an isomorphism $f(x_0):V(x_0)\stackrel{\cong}{\to} V'(x_0)$ and apply the argument from Lemma \ref{lem:V is schurian}.
%\underline{Claim}: this determines an isomorphism $V\stackrel{\cong}{\to} V'$.
%
%If $y\in \supp V$ such that $y\preceq x_0$ then $V(x_0,y)$ and $V'(x_0,y)$ are isomorphisms.
%So, for all $y\preceq x_0$ in $\supp V$, define \begin{displaymath} f(y):=V'(x_0,y)\circ f(x_0) \circ (V(x_0,y))^{-1}.\end{displaymath}
%Dually, for all $y\in\supp V$ such that $x_0\preceq y$ define \begin{displaymath} f(y):=(V'(y,x_0))^{-1}\circ f(x_0) \circ V(y,x_0).\end{displaymath}
%If there are no sinks and sources in $\supp V=\supp V'$, except possibly the endpoints, we have an induced morphism $V\to V'$ such that $f(x)$ is an isomorphism for all $x\in \R$ (by setting $f(x)=0$ when $x\notin \supp V$).
%By Proposition \ref{prop:isomorphisms} $f$ is an isomorphism and so $V\cong V'$.
%Now, suppose there is a sink or source in the interior of $\supp V=\supp V'$.
%
%Let $s_n$ be a source such that $x_0\preceq s_n$.
%By the paragraph above we already have $f(s_n)$.
%For each $y\preceq s_n$ for which we do not yet have an $f(y)$ we can use the technique above and define it without making choices.
%By a dual argument if $s_n\preceq x_0$ we can define $f(y)$ for all $y$ such that $s_n\preceq y$.
%Note that between any real number $x$ and $x_0$ there are only finitely many sinks and sources between $x$ and $x_0$ in the total oder of $\R$.
%Therefore, by repeated use of this technique, we have an induced isomorphism $f(x):V(x)\stackrel{\cong}{\to}V'(x)$ for all $x\in \R$.
%Therefore, we have an induced isomorphism $f:V\stackrel{\cong}{\to} V'$.
The reverse direction is a special case of Proposition \ref{prop:isomorphic implies same support}.
\end{proof}
 
\begin{deff}
 Let $X_1\subset X_2\subset \cdots\subset X_n$ be a filtration of a vector space $X=X_n$. A basis $B$ for $X$ is said to \underline{respect the filtration} if $B\cap X_j$ is a basis for $X_j$ for each $j$. A direct sum decomposition $X=\bigoplus Y_i$ of $X$ is said to \underline{respect the filtration} if each $X_j$ is a direct sum of some of the $Y_i$.
 \end{deff}

 \begin{lemma}\label{lem:Lemma X}
 For any $b\in \mathbb R$, let $\mathcal V_b$ be the full subcategory of $\ReppwfAR$ whose objects are 
interval indecompsables $V$ with $b\in supp(V)\subset[b,\infty)$. Let $\mathcal W_b:= add\, \mathcal V_b$. Then:
 \begin{enumerate}
 \item The restriction map, $res: \mathcal V_b\to \Rep_k(\{b\})$ given by $res(V)=V(b)$ and $res(f)= f(b)$ defines a monomorphism
 $\Hom_{\mathcal V_b}(V,V')\to \Hom_k(V(b),V'(b))$ %is a monomorphism 
 for all $V,V'\in \mathcal V_b$, i.e. restriction to $b$ is a faithful functor on $\mathcal V_b$.
\item The restriction map, $res: \mathcal W_b\to \Rep_k(\{b\})$ is also a faithful functor on $\mathcal W_b$.
\item   There is a unique total ordering on the set of isomorphism classes of objects of $\mathcal V_b$ so that: 
(a) $\Hom_{\mathcal V_b}(V',V)=0$ and $\dim_k\Hom_{\mathcal V_b}(V,V')=1$ whenever $V>V'$ and \\
(b) composition of nonzero maps $V\to V'\to V''$ is always nonzero. 
\item Any $W\in \mathcal W_b$ has a unique filtration $0=W_0\subset W_1\subset \cdots \subset W_m$ so that each $W_k/W_{k-1}$ lies in $add\,V_k$ where $V_1<V_2<\cdots<V_m$. 
Evaluating at vertex $b$ we get a filtration $W_1(b)\subset W_2(b)\subset\cdots\subset W_m(b)=W(b)$ which we call the \underline{filtration of} $W(b)$ \underline{induced by the filtration of} $W$.
\item For any $W\in \mathcal W_b$, any direct sum decomposition $W(b)=\bigoplus X_{l}$ of $W(b)$ into one-dimensional subspaces which respects the filtration of $W(b)$ induced from the %HN-  
filtration of $W$ extends to a direct sum decomposition of $W$, i.e. $W=\bigoplus Y_{l}$ so that $Y_{l}(b)=X_{l}$ for all $l$.
\end{enumerate}
 \end{lemma}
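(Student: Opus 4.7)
The plan is to establish items (1)--(5) in order, using (1) and (2) as the faithfulness tools that drive (3)--(5).

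For (1), given $f\colon V\to V'$ in $\mathcal V_b$ with $f(b)=0$, I would show $f(x)=0$ for every $x\in\supp V\cap\supp V'$ by propagation. Both $V$ and $V'$ are pointwise one-dimensional interval indecomposables with structure maps equal to $1_k$ on their supports, and $\supp V\cap\supp V'$ is an interval containing $b$. Since between $b$ and $x$ there lie only finitely many elements of $S$, I can pick a chain $b=z_0,z_1,\dots,z_n=x$ inside the common support whose consecutive terms sit in a single sink--source interval and are therefore $\preceq$-comparable; the commutative square attached to each pair $(z_i,z_{i+1})$ propagates vanishing from $f(z_i)$ to $f(z_{i+1})$. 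For (2), any $W\in\mathcal W_b$ decomposes as $\bigoplus V_\alpha$ with $V_\alpha\in\mathcal V_b$, morphisms become matrices of component morphisms, and applying (1) componentwise yields faithfulness.

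For (3), I would take the relation $V\geq V'\Leftrightarrow\Hom_{\mathcal V_b}(V,V')\neq 0$. Reflexivity is Lemma~\ref{lem:V is schurian}, which also gives $\End(V)=k$. By (1), a nonzero morphism $f$ satisfies $f(b)\neq 0$, so the composition of two nonzero morphisms is nonzero at $b$ and hence nonzero by (1); this gives both condition (b) and transitivity. Antisymmetry up to isomorphism follows because bidirectional nonzero morphisms compose to a nonzero scalar in $\End(V)=k$, which is invertible, making $V$ a direct summand of the indecomposable $V'$, so $V\cong V'$. The main obstacle is totality: for non-isomorphic $V=M_I$ and $V'=M_J$ in $\mathcal V_b$, one of the Hom spaces must be nonzero. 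Since $I,J$ are intervals sharing left endpoint $b$, one is contained in the other, say $I\subsetneq J$; a direct check of the commutative-square constraint shows $\Hom(M_I,M_J)\neq 0$ is equivalent to the set $J\setminus I$ being $\preceq$-upward-closed in $J$, while $\Hom(M_J,M_I)\neq 0$ is equivalent to it being $\preceq$-downward-closed in $J$. Writing $c=\sup I$, I then case-split on whether $c$ lies in the interior of a sink--source region, in the interior of a source--sink region, or equals some $s_n\in S$, and on whether $c\in I$; the fact that points separated by an element of $S$ are $\preceq$-incomparable confines each check to the neighborhood of $c$, so in every case exactly one of the two closedness conditions holds.

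For (4), Krull--Schmidt on $\mathcal W_b$ gives $W=\bigoplus V_\alpha$; I would group summands by isomorphism class, list the distinct classes in the order from (3) as $V_1<\cdots<V_m$, and set $W_k$ to be the sum of those $V_\alpha$ whose class lies in $\{V_1,\dots,V_k\}$. Uniqueness comes from the intrinsic characterization of $W_k$ as the maximal subrepresentation of $W$ lying in $\add(V_1\oplus\cdots\oplus V_k)$: any larger subrepresentation would have nonzero image in $W/W_k\in\add(V_{k+1}\oplus\cdots\oplus V_m)$, but such maps vanish by $\Hom(V_i,V_j)=0$ for $i<j$ from (3)(a). For (5), I would induct up the filtration. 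Assume the $Y_l$ for $l$ at lower levels already form a decomposition of $W_{k-1}$. Applying (2) to the quotient $W_k/W_{k-1}\in\add V_k$, I produce a decomposition of $W_k/W_{k-1}$ whose $b$-evaluation matches the induced decomposition of $(W_k/W_{k-1})(b)$ given by the $X_l$ at level $k$. I then lift this to a splitting of $0\to W_{k-1}\to W_k\to W_k/W_{k-1}\to 0$ whose image in $W_k(b)$ is $\bigoplus_{l\in L_k}X_l$, using (2) once more to realize this linear complement as a subrepresentation complement. Concatenating over $k$ produces the desired decomposition $W=\bigoplus Y_l$ with $Y_l(b)=X_l$.
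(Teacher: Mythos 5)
Your (1)--(4) are correct. For (1) you replace the paper's appeal to Theorem~\ref{thm:iso-indecomps} and Lemma~\ref{lem:V is schurian} with a direct propagation along a $\preceq$-comparable chain through the (interval) intersection of supports; that is a more hands-on but equally valid route. Your (3) is the same idea as the paper's (both reduce to nested supports and examine the orientation near the right endpoint of the smaller support, equivalently the maximal element of $S$ in it), though you do the bookkeeping via $\preceq$-up/down-closedness of $J\setminus I$, which is fine. Your (4) is essentially identical to the paper's trace argument.

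There is, however, a genuine gap in (5). Twice you invoke (2) --- once to match a decomposition of $W_k/W_{k-1}\cong V_k^{n_k}$ to a prescribed decomposition of its fiber at $b$, and once to turn a prescribed linear complement of $W_{k-1}(b)$ in $W_k(b)$ into a subrepresentation complement --- but (2) only gives \emph{faithfulness} (injectivity) of evaluation at $b$. What both steps actually require is \emph{fullness} on the relevant $\Hom$-spaces: that every linear map $V_k(b)\to V_i(b)$ with $i\le k$ lifts to a morphism $V_k\to V_i$ of representations, equivalently that $\operatorname{Aut}(V_k^{n_k})\twoheadrightarrow\operatorname{Aut}(V_k^{n_k}(b))$ and that $\Hom(V_k^{n_k},W_{k-1})\twoheadrightarrow\Hom_k\bigl(V_k^{n_k}(b),W_{k-1}(b)\bigr)$. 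This fullness is true, and it does follow from combining faithfulness with the dimension count in your own (3)(a) ($\Hom(V_i,V_j)\cong k$ whenever $i\ge j$, while $\Hom_k(V_i(b),V_j(b))\cong k$, so the injective evaluation map is forced to be an isomorphism in exactly the block pattern you need); this is precisely the observation the paper uses to conclude that $\operatorname{Aut}(W)\to\operatorname{Aut}(W(b))$ lands isomorphically onto the block-triangular group $G$. Your inductive scheme will go through once you make that point explicitly, but as written, citing (2) alone does not justify either lifting step.
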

 
 \begin{proof} (1) Given $V,V'$ in $\mathcal V_b$, the support of one of them contains the support of the other. 
 Let $J=supp\,V\cap supp\,V'\subset [b,\infty)$. 
 Then either $J=supp\,V$ or $J=supp\,V'$. Suppose $J=supp\,V$. Since $V$ is indecomposable $J$  is connected and $J\subseteq supp V'$.
 Any morphism $f:V\to V'$ induces a morphism $f_J:V_J\to V_J'$ by restricting to $J$. By Theorem \ref{thm:iso-indecomps}, $V_J\cong V_J'$ is either $V$ or $V'$.
Then we have, by Lemma \ref{lem:V is schurian}, that $f_J$ is a scalar times a fixed isomorphism $V_J\cong V_J'$. In particular $f=0$ if and only if $f$ is zero at $b$. So, evaluation at $b$ is faithful. (2) follows immediately from (1).
 
 (3) Given $V,V'$ in $\mathcal V_b$, suppose by symmetry that the support of $V$ is properly contained in the support of $V'$. Then, there is some $m>b$ so that the support of $V$ is contained in $[b,m]$. There are only finitely many elements of $S$ inside this compact set. Without loss of generality we may assume that $b\in S$. 
 Let $l$ be maximal so that $s_{l}$ is in the support of $V$. If $s_{l}$ is a sink, then $V$ is a sub-representation of $V'$. If $s_{l}$ is a source, then $V$ is a quotient representation of $V'$. In the first case, $\Hom(V,V')=k$ and $\Hom(V',V)=0$. In the second case, $Hom(V,V')=0$ and $Hom(V',V)=k$.
 
 If there are nonzero morphisms $V\to V'\to V''$ then, by (1), evaluation at $b$ gives isomorphisms $V(b)\cong V'(b)\cong V''(b)$. So, the relation of having a nonzero morphism $V\to V'$ is transitive, reflexive, antisymmetric and any two elements are related. So, this is a total ordering.
 
 (4) Given $W\in \mathcal W_b$ we have by definition a direct sum decomposition $W=\bigoplus_{1\le i\le m} (V_i)^{n_i}$ where we order the summands according to the total order given in (3). So, there exists a filtration $0=W_0\subset W_1\subset W_2\subset \cdots\subset W_m=W$ so that $W_i/W_{i-1}=n_iV_i$. Since $Hom(V_i,V_j)=0$ for $i<j$, the sub-representation $W_i$ is uniquely characterized as the trace of $V_1\oplus V_2\oplus \cdots\oplus V_i$ in $W$. So, the HN-filtration is unique.
 
 (5) Let $n=\dim W(b)$ and let $G$ be the subgroup of $GL(n,k)$ which preserves the filtration $W_1(b)\subset W_2(b)\subset\cdots\subset W_m(b)$. This is a block upper triangular matrix group which acts transitively on the set of all bases which respect this filtration of $W(b)$. Since $Hom(V_i,V_j)=K$ for $i\le j$ and $Hom(V_i,V_j)=0$ for $i>j$, we have by (3) that the restriction map $Aut(W)\to Aut(W(b))=G$ is an isomorphism. Therefore, $Aut(W)$ acts transitively on the set of all bases for the vector space $W(b)$ which respect the given filtration of $W(b)$. 
 
Recall we are given a direct sum decomposition $W(b)=\bigoplus X_l$ into one-dimensional subspaces that respects the induced filtration and $W=\bigoplus Y_l$ a direct sum decomposition of $W$ into pointwise one-dimensional indecomposable representations.
One such basis is given by choosing a generator $x_{l}\in X_{l}$ for each summand $X_{l}$ of $W(b)=\bigoplus X_{l}$. A second basis is given by choosing a generator $y_{l}\in Y_{l}(b)$ where $W=\bigoplus (V_i)^{n_i}=\bigoplus Y_{l}$, where each $Y_l$ is equal to some $V_i$, is the given decomposition of $W$ into indecomposable representations which are one-dimensional at $b$. Take $\varphi\in G$ which takes $(y_{l})$ to $(x_{l})$. Then $W=\bigoplus\varphi(Y_{l})$ is the required decomposition of $W$ extending the chosen decomposition of $W(b)$.
 \end{proof}

 \begin{lemma}\label{lem:Y}
 Given any two finite filtrations of a finite-dimensional vector space $X$, there exists a direct sum decomposition of $X$ into one-dimensional subspaces which respects both filtrations.
 \end{lemma}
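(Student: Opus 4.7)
The plan is to induct on the length $m$ of the first filtration $0=X_0\subset X_1\subset\cdots\subset X_m=X$, with the second filtration $0=X_0'\subset X_1'\subset\cdots\subset X_n'=X$ playing the role of a compatibility constraint. The base case $m=1$ is immediate: a decomposition of $X$ into one-dimensional subspaces respecting a single filtration is obtained by choosing, in each quotient $X_j'/X_{j-1}'$, any basis and lifting each basis element to $X_j'$. So all the real content lies in the inductive step.

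For the inductive step, I would first apply the inductive hypothesis to the vector space $X_{m-1}$ with its two induced filtrations $X_0\subset\cdots\subset X_{m-1}$ and $X_0'\cap X_{m-1}\subset\cdots\subset X_n'\cap X_{m-1}$. This gives a decomposition $X_{m-1}=\bigoplus_{i\in A}L_i$ into one-dimensional subspaces compatible with both induced filtrations. Next, consider the quotient $\pi\colon X\to X/X_{m-1}$, which carries the second filtration to $\pi(X_0')\subset\cdots\subset\pi(X_n')$. Choose any basis $\bar e_1,\ldots,\bar e_r$ of $X/X_{m-1}$ respecting this quotient filtration; for each $k$ let $j_k$ be minimal with $\bar e_k\in\pi(X_{j_k}')$, and pick a lift $e_k\in X_{j_k}'$ of $\bar e_k$. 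The proposed decomposition is then $X=\bigoplus_{i\in A}L_i\,\oplus\,\bigoplus_{k=1}^r k\cdot e_k$.

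The main verification is that this refined decomposition respects the second filtration $\{X_j'\}$; respect for $\{X_i\}$ is automatic, since for $i<m$ the subspace $X_i$ was already a direct sum of some of the $L_i$ by induction, and $X_m=X$ is the full decomposition. To check compatibility with $X_j'$, one shows the equality
\[
X_j' = (X_j'\cap X_{m-1}) \,\oplus\, \bigoplus_{k:\,j_k\le j} k\cdot e_k.
\]
The inclusion $\supseteq$ is clear from $e_k\in X_{j_k}'\subseteq X_j'$ and the inductive description of $X_j'\cap X_{m-1}$ as a sum of some $L_i$. For equality, count dimensions: $\dim X_j'-\dim(X_j'\cap X_{m-1})=\dim\pi(X_j')$, and by the defining property of the basis $\{\bar e_k\}$ this equals $\#\{k:j_k\le j\}$. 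Directness follows from the outer direct sum $X=X_{m-1}\oplus\bigoplus_k k\cdot e_k$.

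The main obstacle, and the reason the induction is phrased in terms of one filtration at a time rather than by directly decomposing the bi-filtration $\{X_i\cap X_j'\}$, is that one must ensure the lifts $e_k\in X$ land in the correct $X_{j_k}'$; choosing $\bar e_k$ to respect the quotient filtration and then lifting into the appropriate $X_{j_k}'$ is what makes both compatibility conditions survive simultaneously. Once this bookkeeping is set up, the remaining arguments are purely dimension counts and standard linear algebra.
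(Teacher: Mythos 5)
Your proof is correct, and it takes a genuinely different route from the paper. The paper forms the representation
\[
V_1\hookrightarrow\cdots\hookrightarrow V_{n-1}\hookrightarrow X \hookleftarrow W_{m-1}\hookleftarrow\cdots\hookleftarrow W_1
\]
of an $A_{n+m-1}$ quiver with all arrows pointing toward the middle vertex, invokes the decomposition of finite-dimensional $A_{n+m-1}$-representations into interval indecomposables, and observes that because every structure map is a monomorphism each summand must be nonzero at the middle vertex; the induced decomposition of $X$ at the middle vertex then respects both filtrations by reading off supports. Your argument instead induces on the length of one filtration, decomposes the top quotient $X/X_{m-1}$ compatibly with the image of the second filtration, and chooses each lift $e_k$ inside the minimal $X_{j_k}'$ whose image contains $\bar e_k$; the dimension count $\dim X_j' - \dim(X_j'\cap X_{m-1}) = \dim\pi(X_j') = \#\{k : j_k\le j\}$ then closes the argument. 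The two proofs buy different things: the paper's is shorter and fits the narrative (it recycles the $A_n$ classification that the whole paper is generalizing), while yours is self-contained elementary linear algebra that does not import the structure theory of quiver representations, at the cost of more bookkeeping. The one delicate point in your version---that the lifts must be chosen inside $X_{j_k}'$ rather than arbitrarily---is exactly the point you flag and handle correctly.
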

 
 \begin{proof}
 Given any two filtrations $V_1\subset V_2\subset \cdots\subset V_n=X$ and $W_1\subset W_2\subset\cdots \subset W_m=X$ of $X$ we have the following representation of a quiver of type $A_{n+m-1}$:
 \[
 \xymatrixrowsep{10pt}\xymatrixcolsep{20pt}
\xymatrix{%begin xy matrix
M:& V_1 \ar[r]^\subset& 
	V_2 \ar[r]^\subset& \cdots\ar[r]^\subset & V_{n-1}\ar[r]^\subset & X & W_{m-1}\ar[l]_{\supset}& \cdots\ar[l]_\supset & W_2\ar[l]_\supset & W_1\ar[l]_\supset.
	}%end xy matrix
 \]
We have a direct sum decomposition $M=\bigoplus M_i$ where each $M_i$ is one-dimensional at the middle vertex. This gives a direct sum decomposition of $X$ into one-dimensional subspaces. Then it suffices to prove the following.

Claim: This decomposition $X=M(n)=\bigoplus M_i(n)$ respects both filtrations.

 Proof: Since the maps in the representation $M$ are all monomorphisms, the same holds for each indecomposable component $M_i$. So, each component is nonzero at vertex $n$ (where $M(n)=X$). For any $1\le j< n$, consider the set $I_j$ of all indices $i$ so that $M_i(j)\neq 0$. Then the sum of all $M_i(n)$ for all $i\in I_j$ is equal to $V_j$. Thus $\bigoplus M_i(n)$ respects the first filtration $V_i$ of $X$. Similarly, $\bigoplus M_i(n)$ respects the second filtration $W_j$. So, it respects both filtrations. This proves the lemma.
\end{proof}

\subsection{Necessary Conditions and Decomposition Theorem}\label{sec:thetheorem}

In this section we prove that the sufficient conditions in Proposition \ref{prop:sufficientProp} are also necessary conditions.
We then work up to Lemmas \ref{lem:decomposition extension} and \ref{lem:split subrepresentation}.
Lemma \ref{lem:decomposition extension} shows that the decomposition of certain subrepresentations may be extended to infinity.
Lemma \ref{lem:split subrepresentation} then states that these subrepresentations are indeed summands.
Because we may have infinitely many sinks and sources in our continuous quiver, these lemmas are an essential component of the proof of Theorem \ref{thm:GeneralizedBarCode}, the decomposition theorem.
We save our discussion relating our proof of Theorem \ref{thm:GeneralizedBarCode} to the decomposition theorems in \cite{Botnan,BotnanCrawley-Boevey,Crawley-Boevey2015} for Section \ref{sec:relation to BC-B}.

\begin{deff}\label{def:dualdef}
Choose $A_\R$, a continuous quiver of type $A$.
The \underline{opposite quiver} of $A_\R$, denoted $A_\R^{\text{op}}$, is the continuous quiver of type $A$ where $x\preceq y$ in $A_\R^{\text{op}}$ if and only if $y\preceq x$ in $A_\R$.

Let $V$ be a pointwise finite representation of $A_\R$.
The \underline{dual representation} of $V$, denoted $DV$, is the pointwise finite representation of $A_\R^{\text{op}}$ given by
\begin{align*}
DV(x) :& = D(V(x)) &
DV(y,x) :& = D(V(x,y))
\end{align*}
\end{deff}

\begin{remark}
In Definition \ref{def:dualdef}, note that since $V$ is pointwise finite we have $DDV\cong V$.
\end{remark}

 \begin{lemma}\label{lem:lem A}
 Let $V$ be any object of $\ReppwfAR$. Then the restriction $V_J$ of $V$ to any closed interval $J=[a,b]$ where $s_n\leq a<b\leq s_{n+1}$ for some $n\in\mathbb Z$, decomposes as $V_J=A\oplus B$ where $A$ has support in the open interval $(a,b)$ and $B$ is a finite direct sum of indecomposable one-dimensional representations which are nonzero at either $a$, $b$ or both. 
  \end{lemma}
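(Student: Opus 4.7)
The plan is to reduce via duality to a monotone orientation on $[a,b]$, construct $B$ as a finite direct sum of one-dimensional subrepresentations generated by basis vectors of $V(a)$ and $V(b)$ that respect the image and kernel filtrations at the endpoints, and obtain $A$ as a complement whose support is forced into $(a,b)$.

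First, applying the duality functor from Definition \ref{def:dualdef} if necessary, I would assume the orientation on $[a,b]\subseteq[s_n,s_{n+1}]$ has $a$ as the sink side, so that $a\preceq x\preceq b$ for every $x\in[a,b]$ and every transition map within $J$ has the form $V(y,x):V(y)\to V(x)$ with $x\le y$. Since $V(a)$ and $V(b)$ are finite-dimensional, two finite filtrations are available: the image filtration of Definition \ref{def:image filtration},
\[
V(a)=V^{\bullet}_{I_0}(a)\supsetneq V^{\bullet}_{I_1}(a)\supsetneq\cdots\supsetneq V^{\bullet}_{I_p}(a)=V(b,a)(V(b)),
\]
indexed by intervals $I_i\subseteq[a,b]$ of the form $[a,c_i]$ or $[a,c_i)$; and the kernel filtration
\[
\{0\}=K_0\subsetneq K_1\subsetneq\cdots\subsetneq K_q=\ker V(b,a),
\]
where $K_j=\ker V(b,x)$ on a maximal sub-interval of $[a,b]$.

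Next I would build $B$ explicitly. Choose a basis $\mathcal{B}_a$ of $V(a)$ respecting the image filtration and a basis $\mathcal{B}_b$ of $V(b)$ respecting the kernel filtration. For each $v\in\mathcal{B}_b$, the transport $x\mapsto V(b,x)(v)$ spans a pointwise one-dimensional subrepresentation $N_v\subseteq V_J$ whose support is $[a,b]$ (if $v\notin\ker V(b,a)$) or of the form $(d,b]$ or $[d,b]$ (otherwise, with $d$ determined by the kernel filtration). For each $w\in\mathcal{B}_a\setminus V^{\bullet}_{I_p}(a)$ take the unique $i$ with $w\in V^{\bullet}_{I_i}(a)\setminus V^{\bullet}_{I_{i+1}}(a)$, and invoke Lemma \ref{lem:compatible system of elements} to obtain a compatible system $\{w_x\}_{x\in I_i}$ generating a pointwise one-dimensional subrepresentation $N_w\subseteq V_J$ with support $I_i\subsetneq[a,b]$ containing $a$ but not $b$. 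The sum $B:=\bigoplus_v N_v\oplus\bigoplus_w N_w$ is then a finite direct sum of interval indecomposables, each touching $a$, $b$, or both, and one verifies that $B(a)=V(a)$ and $B(b)=V(b)$ as internal direct sums (the $N_v$ with $v\notin\ker V(b,a)$ contribute a basis of $V^{\bullet}_{I_p}(a)$ at $a$, complemented by the $N_w$).

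The main obstacle is to show that $B$ is in fact a direct summand of $V_J$, equivalently that the inclusion $B\hookrightarrow V_J$ admits a retraction, since at interior points $x\in(a,b)$ the space $V(x)$ can have arbitrary finite dimension and a compatible complement $A(x)\subseteq V(x)$ of $B(x)$ must be chosen consistently with all transition maps. My plan is to note that $B(x)$ is generated by two families of boundary-data lifts, namely the values $V(b,x)(v)$ and the compatible-system values $w_x$, which produce two finite filtrations on $V(x)$; by Lemma \ref{lem:Y} these admit a simultaneous one-dimensional decomposition, and the finiteness of the cut-offs $c_i$ and thresholds $d_j$ means the complements need to be chosen on only finitely many strata of $(a,b)$, allowing compatibility with the transition maps to be secured inductively along these strata. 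Once the splitting $V_J=A\oplus B$ is established, $A$ automatically satisfies $A(a)=A(b)=0$, so $\supp A\subseteq(a,b)$, completing the proof.
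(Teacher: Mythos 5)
Your general plan (reduce via duality to a monotone orientation on $[a,b]$, build $B$ from basis vectors of $V(a)$ and $V(b)$ chosen to respect the image/kernel filtrations, and obtain $A$ as a complement) is in the right spirit, and your proposed $B$ is essentially the same object the paper eventually exhibits. However, the step you flag as the ``main obstacle''---showing that $B$ is actually a direct summand---is exactly where your argument breaks down, and the way you propose to fill it does not work. Lemma \ref{lem:Y} only produces a simultaneous decomposition of a \emph{single} vector space with respect to two filtrations; it does not give you a family of subspaces $A(x)\subseteq V(x)$, chosen coherently for uncountably many $x\in(a,b)$, that is closed under the transition maps. Your appeal to ``finitely many strata'' controls where the index sets of the two filtrations change, not where $\dim V(x)$ changes, and it does not address the consistency requirement $V(y,x)(A(y))\subseteq A(x)$ across each stratum. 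This consistency is the whole difficulty, and it is left unresolved. (There is also a secondary, repairable issue: one must check that your lifted vectors $w_x$ stay transverse to the $N_v(x)$, which requires the right basis choices and a short argument you do not give---but that can be patched.)

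The paper's proof sidesteps the splitting issue entirely by observing that the quotient $P=V_J/K$, where $K(x)=\ker V(x,a)$, has all its maps $P(x)\to P(a)$ injective, hence is \emph{projective} by Theorem \ref{thm:characterization of one sided projectives}(1). Projectivity makes the exact sequence $0\to K\to V_J\to P\to 0$ split automatically---no explicit complements, no inductive stratum-by-stratum construction. Theorem \ref{thm:characterization of one sided projectives}(2) then decomposes $P$ into finitely many interval summands all nonzero at $a$. The paper then dualizes $K$ and runs the identical argument at the other endpoint $b$; what is left over is zero at both endpoints, hence supported in $(a,b)$. This is the one idea your proposal is missing: you should recognize that the map-to-a-sink criterion for projectivity hands you the splitting for free, rather than trying to construct a compatible family of complements by hand.
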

 
 \begin{proof}
 Without loss of generality we may assume that $s_n$ is a sink and $s_{n+1}$ is a source. Let $K$ be the subrepresentation of $V_J$ given by $K(x)=\ker(V(x,n):V(x)\to V(s_n))$ for all $x\in J=[s_n,s_{n+1}]$. 
 
 Then $P=V_J/K$ is a projective representation of $J$ since all morphisms $P(x)\to P(s_n)$ are monomorphisms by construction of $K$. So $V_J\cong K\oplus P$. It is straightforward to decompose $P$ as a direct sum of finitely many pointwise one-dimensional representations, each nonzero at $n$.
 
 It remains to show that $K$ is a direct sum of a representation with support on the open interval $(s_n,s_{n+1})$ and a finite number of pointwise one-dimensional representations all nonzero at $s_{n+1}$. This is accomplished using the dual representation $DK$. Since $DK$ is a representation of the opposite quiver, the interval $J$ with $n$ as source and $s_{n+1}$ as sink, using exactly the same argument as above we see that $DK=A\oplus B$ where $A_{n+1}=0$ and $B$ is a projective representation of $J^{op}$. Thus $K\cong DA\oplus DB$ where $DA$ has support in the open interval $(s_n,s_{n+1})$ and $DB$ is a finite direct sum of one-dimensional representations which are all nonzero at $s_{n+1}$.
 \end{proof}
 
 \begin{lemma}\label{lem:lem B} If $V$ is an indecomposable object of $\ReppwfAR$ with support in an interval $[s_n,s_{n+1}]$ for some $n\in \mathbb Z$, then $V$ is pointwise one-dimensional.
 \end{lemma}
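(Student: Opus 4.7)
The plan is to apply Lemma \ref{lem:lem A} to the full interval $J = [s_n, s_{n+1}]$. Since $\supp V \subset J$, we have $V_J = V$, and the lemma produces a decomposition $V = A \oplus B$ in which $\supp A \subset (s_n, s_{n+1})$ and $B$ is a finite direct sum of pointwise one-dimensional indecomposable representations nonzero at $s_n$, at $s_{n+1}$, or at both. Since $V$ is indecomposable, exactly one of $A$ and $B$ is nonzero, and the argument splits into two cases.

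In the first case $V = B$, the representation $V$ is an indecomposable direct summand of a finite direct sum of pointwise one-dimensional indecomposable representations. Each such summand has endomorphism ring $k$ by Lemma \ref{lem:V is schurian}, so the usual Krull--Schmidt argument for finite direct sums of indecomposables with local endomorphism rings forces $V$ to be isomorphic to one of them, and hence pointwise one-dimensional.

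In the second case $V = A$, we have $\supp V \subset (s_n, s_{n+1})$. Passing to the dual $DV$ (Definition \ref{def:dualdef}) if necessary, I may assume $s_n$ is a sink, so that $\preceq$ coincides with $\le$ on $[s_n, s_{n+1}]$ and every structure map takes the form $V(y,x) : V(y) \to V(x)$ with $x \le y$. I would then iterate Lemma \ref{lem:lem A}: for every closed subinterval $[a,b] \subset (s_n, s_{n+1})$ meeting $\supp V$, a second application to $V_{[a,b]}$ yields $V_{[a,b]} = A' \oplus B'$ with $B'$ a finite direct sum of pointwise one-dimensional summands. The plan is to show that any nontrivial such splitting of the restriction extends to a nontrivial splitting of $V$ on all of $\supp V$, via the filtration-respecting extension of bases supplied by Lemma \ref{lem:Lemma X}(5) applied at each of the endpoints $a$ and $b$. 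Indecomposability of $V$ then forces each $V_{[a,b]}$ to be pointwise one-dimensional, and letting $[a,b]$ exhaust $\supp V$ yields the conclusion.

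The main obstacle is the extension step in the second case: lifting the decomposition of the restriction $V_{[a,b]}$ to a decomposition of $V$ on $\supp V \setminus [a,b]$. Because the orientation on $(s_n, s_{n+1})$ is constant, the structure maps from points outside $[a,b]$ factor through $V(a)$ or $V(b)$, so the image filtrations at those endpoints coming from outside $[a,b]$ must be compared with the filtrations induced by the decomposition of $V_{[a,b]}$. Lemma \ref{lem:Lemma X}(5) on each side should then promote the local splitting at $V(a)$ and $V(b)$ to a global direct sum decomposition of $V$, which is the delicate point the argument must deliver.
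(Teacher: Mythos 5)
Your Case 1 ($V=B$) is fine; it is a consequence of the Krull--Schmidt theorem for a finite sum of indecomposables with local endomorphism rings, and is in fact weaker than what Lemma~\ref{lem:lem A} already gives. But note the paper's case distinction is different and sharper: it separates not according to whether $\supp V$ touches $s_n$ or $s_{n+1}$, but according to whether $\supp V=J$ contains \emph{its own} endpoint. If $J=[a,b]$, $[a,b)$, or $(a,b]$ with $a\ge s_n$, $b\le s_{n+1}$, then $V=V_{[a,b]}$, Lemma~\ref{lem:lem A} gives $V=A\oplus B$, and since $V$ is nonzero at the endpoint it contains one immediately gets $V=B$ and the conclusion. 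That case distinction makes your delicate Case 2 argument unnecessary except when $J$ is a genuinely open interval $(a,b)$, where $V$ vanishes at both $a$ and $b$.

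The gap is in Case 2. You claim that \emph{any} nontrivial splitting of $V_{[a,b]}$ extends to a nontrivial splitting of $V$, and propose to do the extension with Lemma~\ref{lem:Lemma X}(5) alone. This is false as stated: Lemma~\ref{lem:Lemma X}(5) requires the chosen one-dimensional decomposition of $V(a)$ (resp.\ $V(b)$) to \emph{respect the filtration coming from the outside of $[a,b]$}, and an arbitrary direct-sum decomposition of $V_{[a,b]}$ has no reason to satisfy that compatibility; one can already move the complementary summand inside $V(a)$ by an upper-triangular change of basis and destroy the compatibility. What is true, and what the paper uses, is that \emph{some} decomposition compatible with both filtrations exists, and that existence is exactly Lemma~\ref{lem:Y}, which you never invoke. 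Even with Lemma~\ref{lem:Y} in hand, extending a decomposition of $V_{[a,b]}$ simultaneously past $a$ \emph{and} past $b$ is not addressed by your sketch: adjusting the decomposition of $V_{[a,b]}$ to match the filtration at $a$ can spoil the matching at $b$. The paper avoids this two-sided pasting entirely. For $J=(a,b)$ open it picks a single interior point $c$, decomposes $V_{[a,c]}$ and $V_{[c,b]}$ via Lemma~\ref{lem:lem A} (the $A$-summands vanish since they would be summands of $V$, via Lemma~\ref{lem:W(b)=0 implies W is summand}), applies Lemma~\ref{lem:Y} to the two filtrations on $V(c)$ to get a common refining basis, and then uses Lemma~\ref{lem:Lemma X}(5) on each side to paste. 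Because $V$ is zero at $a$ and at $b$, there is nothing to glue at the outer ends, so only one gluing point is ever needed. Your overall strategy (iterate Lemma~\ref{lem:lem A}, then paste) is the right one, but the pasting step as you describe it does not go through; it needs Lemma~\ref{lem:Y} and it needs the one-point structure that the endpoint case distinction provides.
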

 
 \begin{proof}
 The support of $V$ must be an interval $J\subseteq[s_n,s_{n+1}]$. If $J$ contains either of its endpoints then the previous lemma applies. It remains to consider the case when $J=(a,b)$ is open. Let $c\in (a,b)$. Then applying the previous lemma to the intervals $[a,c]$ and $[c,b]$ we decompose $V_{[a,c]}$ and $V_{[c,b]}$ into a direct sum of finitely many pointwise one-dimensional representations each of which is nonzero at $c$. The other components of $V_{[a,c]}$ and $V_{[c,b]}$ given by the lemma must be zero since they would be components of $V$. This is equivalent to a representation of a finite quiver of type $A_m$ with straight orientation. So, we can choose the decompositions of $V_{[a,c]}$ and $V_{[c,b]}$ so that they give the same decomposition of $V_c$. This decomposes $V=V_{[a.b]}$ into a direct sum of pointwise one-dimensional representations. Since $V$ is indecomposable there is only one component. % and the lemma follows.%Then there is a quiver $Q:0\leftarrow 1\leftarrow\cdots\leftarrow m$ of type $A_{m+1}$ with straight orientation where $m=\dim V_c$ so that $V_{[a,c]}$ is equivalent to an injective representation of the subquiver $0\leftarrow 1\leftarrow\cdots\leftarrow k$ for some fixed vertex $k$ and $V_{[c,b]}$ is equivalent to a projective resolution of the subquiver $k\leftarrow k+1\leftarrow \cdots\leftarrow m$.
 \end{proof}

 \begin{lemma}\label{lem:lem C} Let $V$ be an indecomposable object of $\ReppwfAR$. For any two integers $n<m$, the restriction of $V$ to the closed interval $[s_n,s_m]$ is a direct sum of finitely many indecomposable pointwise one-dimensional representations.
 \end{lemma}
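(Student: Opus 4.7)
The plan is to induct on $m-n$. For the base case $m=n+1$, Lemma~\ref{lem:lem A} gives a decomposition $V|_{[s_n,s_{n+1}]}=A\oplus B$ with $\supp A\subset(s_n,s_{n+1})$ and $B$ a finite direct sum of pointwise one-dimensional representations. The key observation is that, because the orientation axioms force every $\preceq$-relation $y\preceq x$ with $x\in(s_n,s_{n+1})$ to satisfy $y\in[s_n,s_{n+1}]$ (and symmetrically for $x\preceq y$), the extension-by-zero $\tilde A$ of $A$ to all of $\R$ is a subrepresentation of $V$, and the projection $V|_{[s_n,s_{n+1}]}\twoheadrightarrow A$ extends (again by zero) to a splitting $V\twoheadrightarrow\tilde A$. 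Since $V$ is indecomposable, either $\tilde A=0$, in which case $V|_{[s_n,s_{n+1}]}=B$ is already a finite direct sum, or $\tilde A=V$, in which case $V$ has support in $(s_n,s_{n+1})\subset[s_n,s_{n+1}]$ and Lemma~\ref{lem:lem B} forces $V$ to be pointwise one-dimensional, so the restriction is trivially as required.

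For the inductive step, assume the claim for $m-n\le k$ and consider $m-n=k+1$. Applying the inductive hypothesis to $V|_{[s_n,s_{m-1}]}$ and the base case to $V|_{[s_{m-1},s_m]}$ yields finite decompositions $V|_{[s_n,s_{m-1}]}=\bigoplus_l N_l$ and $V|_{[s_{m-1},s_m]}=\bigoplus_r M_r$ into pointwise one-dimensional interval indecomposables (the subcase in which $V$ is globally pointwise one-dimensional is handled immediately). Set $b=s_{m-1}$ and partition the summands on each side according to whether $b$ lies in their support. The summands with $b\notin\supp$ contribute directly, as finitely many pieces, to the final decomposition, while the remaining summands assemble into $W_R=\bigoplus_{b\in\supp M_r}M_r\in\mathcal W_b$ and $W_L=\bigoplus_{b\in\supp N_l}N_l$, which lies in the analogous category $\mathcal W_b$ for $A_\R^{\text{op}}$.

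Now Lemma~\ref{lem:Lemma X}(4) and its dual produce two finite filtrations of the common finite-dimensional space $W_L(b)=V(b)=W_R(b)$. Lemma~\ref{lem:Y} then yields a direct sum decomposition of $V(b)$ into one-dimensional subspaces respecting both filtrations, and Lemma~\ref{lem:Lemma X}(5) together with its dual lifts this to decompositions $W_L=\bigoplus Y^L_i$ and $W_R=\bigoplus Y^R_i$ with $Y^L_i(b)=Y^R_i(b)$ for every $i$. Matching $Y^L_i$ with $Y^R_i$ along this shared one-dimensional subspace of $V(b)$ glues each pair into a pointwise one-dimensional interval indecomposable representation on $[s_n,s_m]$, verifying the hypotheses of Proposition~\ref{prop:sufficientProp}. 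Combining these finitely many glued indecomposables with the original summands supported away from $b$ exhibits $V|_{[s_n,s_m]}$ as a finite direct sum of pointwise one-dimensional interval indecomposables.

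The principal obstacle is the gluing at $b=s_{m-1}$: the one-dimensional subspaces of $V(b)$ singled out by the left- and right-hand decompositions need not agree, so one cannot simply pair the $N_l$'s with the $M_r$'s directly. Lemmas~\ref{lem:Lemma X} and~\ref{lem:Y} are precisely the tools that let us simultaneously change basis on both sides to produce a single one-dimensional decomposition of $V(b)$ compatible with both induced filtrations, which then lifts on either side to the desired global decomposition.
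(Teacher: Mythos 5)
Your proof follows essentially the same route as the paper's: induct on $m-n$, using Lemma~\ref{lem:lem A} plus the ``zero at the cut-point gives a summand of $V$'' observation for the base case, and Lemmas~\ref{lem:Lemma X} and~\ref{lem:Y} to reconcile the two filtrations of $V(b)$ and paste the decompositions along the intermediate sink/source in the inductive step. The only cosmetic differences are that you fix the cut-point $b=s_{m-1}$ (the paper allows any $s_l$ with $n<l<m$), you spell out the extension-by-zero argument rather than citing Lemma~\ref{lem:W(b)=0 implies W is summand}, and you are explicit about invoking the dual of Lemma~\ref{lem:Lemma X} for the left-hand side, all of which the paper leaves implicit.
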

 
 \begin{proof}
 The proof is by induction on $m-n$. Suppose first that $m-n=1$ and let $J=[s_n,s_m]=[s_n,s_{n+1}]$. If $supp V\subseteq J$ then $V$ is pointwise one-dimensional by Lemma \ref{lem:lem B}. If $supp V\nsubseteq J$ then, by Lemma \ref{lem:lem A}, the restriction of $V$ to $J$ is a direct sum of pointwise one-dimensional objects plus a summand $A$ with support on $(s_n,s_{n+1})$. But such a summand would also be a summand of $V$ by Lemma \ref{lem:W(b)=0 implies W is summand}. Therefore, $A=0$ and the Lemma holds for $m=n+1$.
 
 Now suppose $m\ge n+2$ and take any integer $k$ so that $n<l<m$. By induction on $m-n$, $V_{[s_n,s_{l}]}$ and $V_{[s_{l},s_m]}$ decompose into pointwise one-dimensional components. By Lemma \ref{lem:Lemma X}, this gives two filtrations of $V_k$. By Lemma \ref{lem:Y}, there is a direct sum decomposition of $V_n$ compatible with both filtrations. This extends to compatible direct sum decompositions of $V_{[s_n,s_{l}]}$ and $V_{[s_{l},s_m]}$ which paste together to give a decomposition of $V_{[s_n,s_m]}$ into one-dimensional representations.
 \end{proof}
 
  \begin{lemma}\label{lem:W(b)=0 implies W is summand}
 Let $V$ be a representation in $\ReppwfAR$ and let $V_{(-\infty,b]}$ be the restriction of $V$ to the interval $(-\infty,b]$. Then any summand $W$ of $V_{(-\infty,b]}$ which is zero at $b$ is a summand of $V$.
 \end{lemma}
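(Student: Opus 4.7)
The plan is to extend $W$ and its complementary summand from $V_{(-\infty,b]}$ to all of $A_\R$ in the obvious way and verify that the resulting decomposition of $V$ is compatible with every structure map. Write $V_{(-\infty,b]}=W\oplus W'$ with $W(b)=0$. Define $\widetilde W$ on $\R$ by $\widetilde W(x)=W(x)$ for $x\le b$ and $\widetilde W(x)=0$ for $x>b$, taking maps from $W$ where both arguments satisfy $y,x\le b$ and the zero map otherwise. Symmetrically define $\widetilde{W'}$ by $\widetilde{W'}(x)=W'(x)$ for $x\le b$ and $\widetilde{W'}(x)=V(x)$ for $x>b$, with structure maps inherited from $W'$ on $(-\infty,b]$ and from $V$ elsewhere. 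Pointwise we plainly have $V(x)=\widetilde W(x)\oplus \widetilde{W'}(x)$, so the only content is showing that every $V(x,y)$ with $y\preceq x$ respects this splitting.

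The check is immediate whenever $x,y$ both lie in $(-\infty,b]$ (by hypothesis) and trivial whenever both lie in $(b,+\infty)$ (since $\widetilde W$ vanishes there). The substantive case is a comparable pair straddling $b$ in the standard order. Since comparable pairs cannot enclose any element of $S$, such a straddling pair lies in a common monotone interval $[s_n,s_{n+1}]$; in particular $b$ itself is not in $S$ in this case, so $b$ is interior to $[s_n,s_{n+1}]$, and any structure map between a point $v\le b$ and a point $u>b$ factors through $V(b)$. If $s_n$ is a sink, then $v\preceq b\preceq u$ and $V(u,v)=V(b,v)\circ V(u,b)$; its image lies in $V(b,v)(V(b))=V(b,v)(W'(b))\subseteq W'(v)$ because the splitting on $(-\infty,b]$ is componentwise and $W(b)=0$. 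Dually, if $s_n$ is a source, then $u\preceq b\preceq v$ and $V(v,u)=V(b,u)\circ V(v,b)$; since $V(v,b)(W(v))\subseteq W(b)=0$, the map $V(v,u)$ annihilates $W(v)$, and its restriction to $W'(v)$ automatically lands in $V(u)=\widetilde{W'}(u)$. The remaining endpoint case $v=b$ uses only $V(b)=W'(b)$, which gives the conclusion directly; and the case $b\in S$ does not produce straddling comparable pairs at all.

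With these verifications, $\widetilde W$ and $\widetilde{W'}$ are subrepresentations of $V$ satisfying $V=\widetilde W\oplus \widetilde{W'}$, and the restriction of $\widetilde W$ to $(-\infty,b]$ is $W$ by construction, completing the proof. The only potential obstacle is bookkeeping for the sink/source dichotomy at the interval containing $b$, but once the straddling map is factored through $V(b)$ the hypothesis $W(b)=0$ closes every case uniformly.
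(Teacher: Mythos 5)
Your argument is correct and is essentially the same as the paper's: both extend the splitting of $V_{(-\infty,b]}$ across $b$ by declaring $W$ to vanish on $(b,\infty)$ and then observe that any structure map crossing $b$ factors through $V(b)=W'(b)$, which kills the $W$-component. The paper packages this as gluing the projection $\pi$ with the zero endomorphism of $V_{[b,\infty)}$ along $\{b\}$, whereas you write out the complementary subrepresentations $\widetilde W$, $\widetilde{W'}$ and verify the straddling cases explicitly; these are the same idempotent-extension argument in two notations.
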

 
 \begin{proof}
 Let $\pi:V_{(-\infty,b]}\to V_{(-\infty,b]}$ be the projection to $W$. Then $\pi_b:V(b)\to V(b)$ is zero. So, $\pi$ and the zero morphism on $V_{[b,\infty)}$ agree on the overlap of their domains. So, their union is an endomorphism of $V$. This endomorphism is evidently the projection to $W$ showing that $W$ is a summand of $V$.
 \end{proof}
 
\begin{construction}\label{con:V0infinity}
Let $A_\R$ be a continuous quiver of type $A$ whose sinks and sources are unbounded above.
I.e., for each sink or source $s_n$ there is an $s_{n+1}$.
Let $V$ be a pointwise finite-dimensional representation of $A_\R$ such that, for all $n\in\Z$, the restriction $V_{[s_n,s_{n+1}]}$ contains no direct summands whose support is contained entirely in $(s_n,s_{n+1})$ (i.e., $A=0$ in the $A\oplus B$ decomposition in Lemma \ref{lem:lem A}).

Consider the restriction $V_{[s_{l-1},s_l]}$.
By assumption $V_{[s_{l-1},s_l]}$ is a finite direct sum of indecomposables, all of whose support includes $s_l$ or $s_{l-1}$.
Let $V^l_{0,0}$ be the direct sum of all those summands that include \emph{only} $s_l$, \emph{not} $s_{l-1}$.
Now consider $V_{[s_{l-1},s_{l+1}]}$.
By assumption $V_{[s_{l-1},s_{l+1}]}$ is a finite direct sum of indecomposables, each of whose support contains $s_{l-1}$, $s_l$ or $s_{l+1}$.
Let $V^l_{0,1}$ be the direct sum of all such indecomposables whose support contains \emph{both} $s_l$ \emph{and} $s_{l+1}$, but \emph{not} $s_{l-1}$.
Let $V^l_{1,1}$ be the direct sum of such indecomposables whose support contains $s_l$ \emph{but not} $s_{l+1}$ or $s_{l-1}$.
We ignore those indecomposables whose support does not contain $s_l$.

We can continue this process for all $n\geq 0$.
For each $n\geq 0$ and $0\leq i \leq n$ we define $V^l_{i,n}$ in the following way.
It is the direct sum of those summands of $V_{[s_{l-1},s_{l+n}]}$ whose support contains exactly sinks and sources $s_{l+j}$ for $0\leq j \leq n-i$.
Note that this never includes $s_{l-1}$.
In particular, $V^l_{0,n}$ is the direct sum of those interval indecomposable summands of $V_{[s_{l-1},s_{l+n}]}$ whose support contains $s_l$ and $s_{l+n}$.
We have three examples below, two from the previous paragraph and also the summands we consider from $V_{[s_{l-1},s_{l+2}]}$.
\begin{displaymath}\begin{tikzpicture}
\foreach \y in {0,...,4}
	\filldraw[fill=black] (0,\y/2) circle[radius=.5mm];
	
\foreach \y in {0,1,2}
{
	\draw[thick] (3, \y/2) -- (4,\y/2);
	\foreach \x in {3,4}
		\filldraw[fill=black] (\x,\y/2) circle[radius=.5mm];
}

\foreach \y in {0,1}
{
	\draw[thick] (7,\y/2) -- (9,\y/2);
	\foreach \x in {7,8,9}
		\filldraw[fill=black] (\x,\y/2) circle[radius=.5mm];
}

\draw[thick] (7,1) -- (8.4,1);
\filldraw[fill=white] (8.4,1) circle[radius=.6mm];
\filldraw[fill=black] (7,1) circle[radius=.5mm];
\filldraw[fill=black] (8,1) circle[radius=.5mm];

\foreach \x in {3,7}
{
	\foreach \y in {3,4}
		\filldraw[fill=black] (\x,\y / 2) circle[radius=.5mm];
	\draw[thick] (\x, 1.5) -- (\x+0.7, 1.5);
	\draw[thick] (\x, 2) -- (\x+0.5, 2);
	\filldraw[fill=white] (\x+0.7,1.5) circle [radius=.6mm];
	\filldraw[fill=black] (\x+0.5,2) circle [radius=.5mm];
}

\draw (-0.4,-1) node[anchor=north] {$V^l_{0,0}$};
\draw (3.1,-1) node[anchor=north] {$V^l_{0,1}\oplus V^l_{1,1}$};
\draw (7.6,-1) node[anchor=north] {$V^l_{0,2}\oplus V^l_{1,2}\oplus V^l_{2,2}$};

\foreach \x in {0, 3 ,7}
	\draw (\x,0) node[anchor=north] {$s_l$};
	
\foreach \x in {4,8}
	\draw (\x,0) node[anchor=north] {$s_{l+1}$};
	
\draw (9,0) node[anchor=north] {$s_{l+2}$};

\draw(-.8,-.15) -- (-1,-.15) -- (-1,2.15) -- (-.8,2.15);

\draw(2.2, -.15) -- (2, -.15) -- (2, 1.15) -- (2.2, 1.15);
\draw(2.2, 1.35) -- (2, 1.35) -- (2,2.15) -- (2.2,2.15);

\draw(6.2, -.15) -- (6, -.16) -- (6, .65) -- (6.2, .65);
\draw(6.2, .85) -- (6, .85) -- (6, 1.15) -- (6.2, 1.15);
\draw(6.2, 1.35) -- (6, 1.35) -- (6, 2.15) -- (6.2, 2.15);

\foreach \x in {0, 3, 7}
{
	\draw[thick] (\x - 0.8, 0) -- (\x,0);
	\filldraw[fill=white] (\x-.8,0) circle [radius=.6mm];
	\draw[thick] (\x - 0.7, 1) -- (\x,1);
	\filldraw[fill=black] (\x-.7,1) circle [radius=.5mm];
	\draw[thick] (\x - 0.5, 1.5) -- (\x,1.5);
	\filldraw[fill=white] (\x-.5,1.5) circle [radius=.6mm];
	\draw[thick] (\x - 0.2, .5) -- (\x,.5);
	\filldraw[fill=black] (\x-.2,.5) circle [radius=.5mm];
}
\end{tikzpicture}\end{displaymath}
We note that if $1\leq i \leq n$ then $V^l_{i,n} = V^l_{i+1,n+1}$. %[
Note also that the constructions can be made on $(-\infty, s_{l+1}]$ instead and those representations are denoted $V^{i,n}_l$. %)
\hfill $\diamond$
\end{construction}

\begin{proposition}\label{prop:V1n is split}
Let $A_\R$ and $V$ be as in Construction \ref{con:V0infinity} for some $s_l$.
Then, for all $n\geq 1$ and $1\leq i \leq n$, $V_{i,n}^l$ and $V^{i,n}_l$ are split subrepresentations of $V$.
\end{proposition}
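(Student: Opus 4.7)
The plan is to construct, for each $i \geq 1$, an idempotent endomorphism $\tilde\pi$ of $V$ whose image is $V^l_{i,n}$, thereby exhibiting $V^l_{i,n}$ as a direct summand of $V$. By Construction \ref{con:V0infinity}, $V^l_{i,n}$ is already a direct summand of the restriction $V_{[s_{l-1},s_{l+n}]}$, so there is an idempotent $\pi \in \End(V_{[s_{l-1},s_{l+n}]})$ projecting onto $V^l_{i,n}$. I would then extend $\pi$ to all of $V$ by the obvious extension-by-zero: $\tilde\pi(x) = \pi(x)$ for $x \in [s_{l-1},s_{l+n}]$ and $\tilde\pi(x) = 0$ otherwise.

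The essential input is the following boundary-vanishing observation: for $i \geq 1$, every indecomposable summand of $V^l_{i,n}$ has support containing exactly the sinks/sources $s_l, \ldots, s_{l+n-i}$. In particular, the support omits both $s_{l-1}$ and $s_{l+n}$, so $V^l_{i,n}(s_{l-1}) = V^l_{i,n}(s_{l+n}) = 0$. Since $\pi$ is the projection onto $V^l_{i,n}$, this forces $\pi(s_{l-1}) = \pi(s_{l+n}) = 0$. This vanishing at the endpoints is what will allow the extension-by-zero to be a morphism.

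The main obstacle is verifying that $\tilde\pi$ commutes with every structure map $V(x,y)$ when $y \preceq x$. The case where both $x,y$ lie in $[s_{l-1},s_{l+n}]$ is automatic since $\pi$ is an endomorphism there, and the case where both lie outside is trivial. The subtle case is the mixed one, and the key structural input is the definition of $\preceq$: the relation $y \preceq x$ forces no element of $S$ to lie strictly between $y$ and $x$. Thus if $x \in [s_{l-1},s_{l+n}]$ and, say, $y < s_{l-1}$, then $s_{l-1}$ cannot be strictly between $y$ and $x$, which forces $x = s_{l-1}$; the symmetric arrangement with $y > s_{l+n}$ forces $x = s_{l+n}$. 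In either situation the value $\tilde\pi(x) = \pi(x) = 0$ by the boundary-vanishing observation, while $\tilde\pi(y) = 0$ by construction, so both sides of the naturality square are zero. The reversed mixed case, with $y$ inside and $x$ outside, is handled identically, swapping the roles of $x$ and $y$.

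Once this is established, $\tilde\pi$ is an idempotent endomorphism of $V$ with image $V^l_{i,n}$, which splits $V^l_{i,n}$ off as a direct summand. The analogous statement for $V^{i,n}_l$ follows by applying the same argument to the opposite quiver $A_{\R}^{\text{op}}$ of Definition \ref{def:dualdef}, i.e.\ by dualizing the entire construction and rereading the above with the roles of $s_{l-1}$ and $s_{l+n}$ replaced by the endpoints of the corresponding left-extending interval.
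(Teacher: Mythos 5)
Your proof is correct and essentially the same as the paper's: both rest on (i) $V^l_{i,n}$ being a split summand of the restriction $V_{[s_{l-1},s_{l+n}]}$, (ii) vanishing at both boundary points $s_{l-1}$ and $s_{l+n}$, and (iii) splitting off from $V$ by extending the projection by zero. The only cosmetic difference is that the paper abstracts step (iii) into Lemma \ref{lem:W(b)=0 implies W is summand} and applies it once per endpoint (reducing general $i$ to $i=1$ via $V^l_{i,n}=V^l_{i+1,n+1}$), whereas you re-derive the extension-by-zero argument inline and handle both endpoints and all $i\geq 1$ at once.
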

\begin{proof}
We know the representation $V_{1,n}$ is a split subrepresentation of $V_{[s_{l-1},s_{l+n+1}]}$ as a consequence of Lemma \ref{lem:lem C}.
We know that $V_{1,n}(s_{l-1})=0$ and $V_{1,n}(s_{l+n+1})=0$.
By two uses of Lemma \ref{lem:W(b)=0 implies W is summand} we see that $V_{1,n}$ is a split subrepresntation of $V$.
Finally, recall that $V^l_{i,n} = V^l_{i+1,n+1}$ when $i\geq 1$.
By a similar argument $V^{i,n}_l$ is a split subrepresntation of $V$.
\end{proof}

\begin{lemma}\label{lem:decomposition extension} %(
Let $l\in \Z$ and $V$ be a representation with support contained in $[s_{l-1},+\infty)$. %]
Assume that for all $n\geq 0$, any indecomposable summand of $V_{[s_{l-1},s_{l+n}]}$ has support at $s_{l+n}$.
Then a decomposition of $V_{[s_{l-1},s_{l+n}]}$ into interval indecomposables extends to a decomposition of $V_{[s_{l-1},s_{l+n+1}]}$.
\end{lemma}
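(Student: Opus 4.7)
The plan is to set $b = s_{l+n}$ and $b' = s_{l+n+1}$, write the given decomposition as $V_{[s_{l-1}, b]} = \bigoplus_i W_i$ with each $W_i(b) \cong k$ (by the hypothesis applied at level $n$), and then exhibit the extension using the gluing principle of Lemma \ref{lem:Lemma X}(5). The key step will be to show that the canonical filtration of $V(b)$ induced from the right piece $V_{[b, b']}$ collapses to be trivial under the hypothesis, so that the decomposition $V(b) = \bigoplus_i W_i(b)$ tautologically respects it.

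I would first invoke Lemma \ref{lem:lem C} to pick some decomposition $V_{[s_{l-1}, b']} = \bigoplus_j X_j$ into pointwise one-dimensional interval indecomposables. The hypothesis applied at level $n+1$ forces each $\supp X_j$ to contain $b'$, and then connectedness leaves only two possibilities: either $\supp X_j \supseteq [b, b']$, in which case $X_j|_{[b, b']} \cong M_{[b, b']}$, or $\supp X_j \subseteq (b, b']$, in which case $X_j(b) = 0$. Separating these yields $V_{[b, b']} = \mathcal U \oplus \mathcal U'$ with $\mathcal U \cong M_{[b, b']}^{n_A}$ and $\mathcal U'(b) = 0$, so that $\mathcal U(b) = V(b)$.

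The main technical point, and the reason the hypothesis is needed, comes next: since this explicit decomposition of $V_{[b, b']}$ contains no summand with support of the form $[b, c]$ or $[b, c)$ for $c < b'$, Krull-Schmidt applied to $V_{[b, b']}$ shows that no decomposition into indecomposables does. Thus $\mathcal U$, viewed as an object of $\mathcal W_b$, involves only the single isomorphism class $M_{[b, b']}$, and by Lemma \ref{lem:Lemma X}(4) its canonical filtration has one step, so the induced filtration of $\mathcal U(b) = V(b)$ is the trivial filtration $0 \subset V(b)$.

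Since the trivial filtration is respected by every decomposition into lines, $V(b) = \bigoplus_i W_i(b)$ respects it, and Lemma \ref{lem:Lemma X}(5) applied to $\mathcal U$ at $b$ produces $\mathcal U = \bigoplus_i Y_i$ with $Y_i(b) = W_i(b)$ and $Y_i \cong M_{[b, b']}$. To conclude, I would glue $W_i$ and $Y_i$ along the common fibre $W_i(b) = Y_i(b)$ to obtain a sub-representation $Z_i \subset V_{[s_{l-1}, b']}$ that is pointwise one-dimensional with connected support, hence an interval indecomposable by Proposition \ref{prop:sufficientProp}; the decomposition $V_{[s_{l-1}, b']} = \bigl(\bigoplus_i Z_i\bigr) \oplus \mathcal U'$ then restricts to $\bigoplus_i W_i$ on $[s_{l-1}, b]$, as required. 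The main obstacle will be the Krull-Schmidt step that collapses the right-hand filtration to be trivial; without the hypothesis that every summand of $V_{[s_{l-1}, b']}$ has support at $b'$, the filtration need not be trivial and the given decomposition of $V(b)$ would not extend automatically.
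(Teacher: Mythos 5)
Your proof is correct, and it lands on the same two-stage structure as the paper's: isolate the summands of $V_{[s_{l-1},b']}$ that are zero at $b$, then extend the given decomposition of $V_{[s_{l-1},b]}$ across $[b,b']$ where the remaining piece is concentrated. But the mechanics differ in an instructive way. The paper phrases the isolation step via a quotient $U_{n+1}$ and claims the split-off summands are projective (resp.~injective) and that the map between $b$ and $b'$ is mono (resp.~epi); it then asserts ``we can extend the decomposition'' to $U_{n+1}$ without spelling out which lemma makes that matching of fibres legitimate. You instead name the tool explicitly: because every indecomposable summand of $V_{[b,b']}$ with $b$ in its support must be $M_{[b,b']}$ (Krull--Schmidt plus the level-$(n+1)$ hypothesis), the $\mathcal W_b$-filtration of $V(b)$ collapses, so Lemma~\ref{lem:Lemma X}(5) applies to any decomposition of $V(b)$ into lines, in particular the one coming from the $W_i(b)$. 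The resulting $Y_i$ with $Y_i(b)=W_i(b)$ as subspaces of $V(b)$ is precisely what makes the gluing $Z_i$ a subrepresentation, and this is cleaner than the paper's appeal to projectivity: the split-off summands $M_{|c,b']}$ with $c>b$ are not in fact projective in $\ReppwfAR$ (they are quotients of the indecomposable $P_{(b}$ by a nonsplit surjection), though they are direct summands, which is all that is actually used. So your route both makes explicit the matching lemma the paper leaves implicit and sidesteps a dubious projectivity claim.

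Two small points to tighten. First, Lemma~\ref{lem:lem C} as stated assumes $V$ indecomposable; you apply it to a general $V$. The fix is easy --- the proof of Lemma~\ref{lem:lem C} gives a decomposition of $V_{[s_{l-1},b']}$ for any pwf $V$ once one adds back the ``interior'' summands $A$, and your hypothesis at level $n+1$ forces those $A$ to vanish --- but say so. Second, when you glue $W_i$ and $Y_i$ along $W_i(b)=Y_i(b)$ you should briefly note why $Z_i$ is closed under the structure maps of $V$: since $b=s_{l+n}\in S$, no pair $y<b<x$ in $[s_{l-1},b']$ is $\preceq$-comparable, so every structure map lands entirely inside $[s_{l-1},b]$ or inside $[b,b']$, and on those pieces $Z_i$ agrees with the subrepresentations $W_i$ and $Y_i$ respectively.
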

\begin{proof}
Suppose $V_{[s_{l-1},s_{l+n}]}\cong \bigoplus_i M_{I_i}$ is a decomposition.
Then each $I_i$ includes $s_{l+n}$.

If $s_{l+n}$ is a sink then $V(s_{l+n},s_{l+n+1})$ is a monomorphism. %otherwise this would contradict our assumption about decompositions of restrictions.
Any interval indecomposable summands of $V_{[s_{l-1},s_{l+n+1}]}$ that do not have support at $s_{l+n}$ are projective.
In particular, they are split subrepresentations of the same restriction (combine Lemmas \ref{lem:lem A} and \ref{lem:W(b)=0 implies W is summand}).
Let $U_{n+1}$ be the quotient of $V_{[s_{l-1},s_{l+n+1}]}$ by the these projective interval indecomposables.
Since $(U_{n+1})_{[s_{l-1},s_{l+n}]} = V_{[s_{l-1},s_{l+n}]}$ and $U_{n+1}(s_{l+n},s_{l+n+1})$ is an isomorphism we can extend the decomposition to $U_{n+1}$.
Since $U_{n+1}$ is a decomposable summand of $V_{[s_{l-1},s_{l+n+1}]}$ and the other summand is decomposable by Theorem \ref{thm:characterization of one sided projectives} we have extended our decomposition.

If $s_{l+n}$ is a source then $V(s_{l+n},s_{l+n+1})$ is an epimorphism. %otherwise this would contradict our assumption about decompositions of restrictions.
Any interval indecomposable summands of $V_{[s_{l-1},s_{l+n+1}]}$ that do not have support at $s_{l+n}$ are injective.
They are split subrepresentations as before.
We can now apply the same argument in the previous paragraph and extend the decomposition.
\end{proof}

The assumptions in the following lemma are justified by Proposition \ref{prop:V1n is split}.

\begin{lemma}\label{lem:split subrepresentation}
Let $A_\R$ and $V$ be as in Construction \ref{con:V0infinity}.
For all $l\in \Z$, $n\geq 1$, and $1\leq i \leq n$, assume $V_{i,n}^l=0=V^{i,n}_l$.
Then $V$ contains a summand as in Lemma \ref{lem:decomposition extension} but whose support does not contain $s_{l-1}$.
\end{lemma}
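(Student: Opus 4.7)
The plan is to construct $W$ as the direct summand of $V$ consisting of those interval indecomposables of $V$ that extend indefinitely to the right of $s_{l-1}$ without containing $s_{l-1}$ in their support.

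For each $n \geq 0$, I would invoke Lemma \ref{lem:lem C} to decompose $V_n := V_{[s_{l-1}, s_{l+n}]}$ as a finite direct sum of interval indecomposables. Under the standing hypothesis $V^l_{i,n}=0$ for all $i\geq 1$, every such summand whose support contains $s_l$ but misses $s_{l-1}$ must also contain $s_{l+n}$; otherwise, it would contribute to a nonzero $V^l_{i,n}$ with $i\geq 1$, contradicting the hypothesis. Let $A_n$ denote the direct sum of these \emph{good} summands (so $A_n$ equals $V^l_{0,n}$ up to isomorphism) and let $B_n$ be a complementary summand, giving $V_n = A_n \oplus B_n$.

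The heart of the argument is the coherence of these decompositions as $n$ varies. Given a decomposition $V_{n+1} = A_{n+1} \oplus B_{n+1}$, restricting to $V_n$ writes $A_{n+1}|_{V_n}$ as a direct sum of interval indecomposables whose supports contain $\{s_l, \ldots, s_{l+n}\}$ and miss $s_{l-1}$, hence these pieces are good at level $n$. Using uniqueness of finite direct-sum decompositions, I would rearrange the decomposition of $V_n$ by an automorphism so that $A_{n+1}|_{V_n}$ actually sits inside $A_n$ as a subsummand. Iterating yields a nested chain $A_n \supseteq A_{n+1}|_{V_n} \supseteq A_{n+2}|_{V_n} \supseteq \cdots$ inside $V_n$; because $V$ is pointwise finite-dimensional, this chain stabilizes at each point.

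Next I would define $W$ via the stable limit: for each $x\in(s_{l-1},+\infty)$ pick any $n_x$ with $x\leq s_{l+n_x}$ and set $W(x):=\bigcap_{m\geq n_x} A_m|_{V_{n_x}}(x)$. This gives a subrepresentation of $V_{[s_{l-1},+\infty)}$ with $W(s_{l-1})=0$, and every indecomposable summand extracted from the stabilized decomposition of $W_{[s_{l-1},s_{l+n}]}$ has support reaching $s_{l+n}$, since it corresponds to an interval that persists through every level. To promote $W$ from a subrepresentation to a summand of $V_{[s_{l-1},+\infty)}$, I would form the analogous limit of the $B_n$ together with the parts of $A_n$ eventually evicted from the nested chain, and check that this gives a complementary summand. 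Since $W(s_{l-1})=0$, the dual of Lemma \ref{lem:W(b)=0 implies W is summand} then promotes $W$ to a summand of $V$. The hypotheses of Lemma \ref{lem:decomposition extension} hold for $W$ by construction.

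The main obstacle is the coherence step: uniqueness in the Krull--Schmidt decomposition only holds up to isomorphism, not up to choice of subrepresentation, so forcing the inclusion $A_{n+1}|_{V_n}\subseteq A_n$ requires, at each stage, an automorphism of $V_n$ aligning the two decompositions (with Lemma \ref{lem:decomposition extension} available to extend from level $n$ to $n+1$ whenever its hypothesis is met on a restricted sub-representation). A secondary subtlety is verifying that the two pointwise limits (for $W$ and its complement) fit together as a direct-sum decomposition of $V_{[s_{l-1},+\infty)}$; this uses pointwise finiteness to ensure eventual stabilization and a diagonal argument to assemble the complement from the $B_n$'s.
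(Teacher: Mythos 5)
Your proposal takes a genuinely different route from the paper, but it leaves the hardest step unresolved, and I believe that step is a real gap rather than a ``subtlety.''

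The paper's proof does not attempt to build a coherent chain of aligned decompositions from the bottom up. Instead it looks at the \emph{complement}: for each $n$, let $K_n$ be the sum of interval summands of $V_{[s_{l-1},s_{l+n}]}$ that are nonzero at $s_{l-1}$. By the standing hypothesis those are the only summands whose support can terminate strictly before the right endpoint, so $x\mapsto \dim K_n(x)$ is weakly decreasing and agrees for varying $n$ on the overlap. Thus $n\mapsto \dim K_n(s_{l+n})$ is a weakly decreasing $\N$-valued function, hence stabilizes at some $m$. After stabilization every map between consecutive sinks/sources $s_{l+n}, s_{l+n+1}$ with $n\ge m$ is forced to be mono or epi, so the \emph{technique} of Lemma \ref{lem:decomposition extension} (not the lemma itself, since its hypothesis fails below the stabilization point) can be applied starting at level $m+1$ to extend a single decomposition all the way to $+\infty$. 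Then Lemma \ref{lem:W(b)=0 implies W is summand} splits off the bounded and $s_{l-1}$-containing pieces, and what remains is the desired summand $W_2$. The whole proof hinges on finding that stabilization point.

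Your approach instead tries to fix a decomposition at every finite level and pass to a limit, which requires the nesting $A_n\supseteq A_{n+1}|_{V_n}\supseteq A_{n+2}|_{V_n}\supseteq\cdots$. You flag this as ``the main obstacle'' and propose to achieve it by applying an automorphism of $V_n$ at each stage, but this does not obviously work, and I think it can fail as stated. Two specific problems. First, an automorphism of $V_n$ aligning $A_{n+1}|_{V_n}$ with $A_n$ generally does \emph{not} extend to an automorphism of $V_{n+1}$ (one can cook up $2$-dimensional examples where a shear on $V(s_{l+n})$ mixing a summand that stops at $s_{l+n}$ with one that continues fails the commutativity constraint with $V(s_{l+n+1})\to V(s_{l+n})$ or its reverse). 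So ``realigning at level $n$'' is an operation on $V_n$'s decomposition only, and applying it breaks whatever alignment you had already arranged between levels $n$ and $n-1$; the realignments do not propagate. Second, without that nesting the pointwise intersection $\bigcap_m A_m|(x)$ you propose as the definition of $W(x)$ need not be the ``right'' subspace at all --- it could even be $0$ while the intended summand is nonzero there. Stabilization by pointwise finiteness of a chain of subspaces requires the chain to actually be a chain. In effect your route needs exactly the kind of coherent extension that Lemma \ref{lem:decomposition extension} provides, but its hypothesis (all summands of each restriction reach the right endpoint) is false for arbitrary $n$ precisely because of the $K$-type summands; the paper's monotonicity argument is the mechanism for getting past them. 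A third, smaller gap: you defer the verification that $W$ and the assembled complement form a direct-sum decomposition of $V_{[s_{l-1},+\infty)}$; the paper avoids this by running Lemma \ref{lem:W(b)=0 implies W is summand} only \emph{after} the single coherent decomposition of $V_{[s_{l-1},+\infty)}$ has been produced.

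To rescue your approach you would essentially need to prove the stabilization of the $K$-dimensions first; once you have that, the nested chain you want is trivially constant from level $m$ onward and the rest goes through. That is, your limit construction collapses onto the paper's argument once the one missing idea is supplied.
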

\begin{proof}
For each $n\geq 1$ and a decomposition of $V_{[s_{l-1},s_{l+n}]}$ let $K_n$ be the sum of interval summands whose support is nonzero at $s_{l-1}$.
By assumption, if $s_{l-1} \leq x \leq y \leq s_{l+n}$ then $\dim K_n(x) \geq \dim K_n(y)$.
Note that $\dim K_n(x) = \dim K_{n+1}(x)$ on $[s_{l-1},s_{l+n}]$, though the decomposition of $K_n$ is not assumed to extend exactly.

Therefore we have a function $[s_{l-1},+\infty)\to \N$ that is weakly decreasing and whose initial value is finite.
Therefore, the function must stabilize to some particular value.
Let $m$ be sufficiently large that $\dim K_n(s_{l+n}) = \dim K_{n+1}(s_{l+n+1})$ for all $n\geq m$.
Then, by assumption, every map $V(s_{l+n},s_{l+n+1})$ for $n\geq m$ is mono or epi. %(
So we can use the same technique in Lemma \ref{lem:decomposition extension} to extend a decomposition of $V_{[s_{l-1},s_{l+m+1}]}$ to all of $V_{[s_{l-1},+\infty)}$. %]((

Then any summands of $V_{[s_{l-1},+\infty)}$ with bounded support that is nonzero at $s_{l-1}$ are split subrepresentations of $V_{[s_{l-1},+\infty)}$ (Lemma \ref{lem:W(b)=0 implies W is summand}). %]]
Denote those summands by $U$ and the rest by $W$.
Then $W$ satisfies the hypothesis of Lemma \ref{lem:decomposition extension} and we have a decomposition of $W$ already.
In particular, we can write $W\cong W_1\oplus W_2$ where the summands of $W_1$ are nonzero at $s_{l-1}$ and the summands at $W_2$ are 0 at $s_{l-1}$.
Then by a further use of Lemma \ref{lem:W(b)=0 implies W is summand} we see $W_2$ is actually the summand of $V$ that we desired.
\end{proof}

\begin{notation}\label{note:V0infinity}
Let $A_\R$ and $V$ be as in Construction \ref{con:V0infinity}.
For some $l$, let $W_2$ be as in the end of the proof of Lemma \ref{lem:split subrepresentation}.
As seen in the proof, $W_2$ is a direct sum of interval indecomposables.
Let $V^l_{0,\infty}$ be the direct sum of those summands of $W_2$ who have support at $s_l$.
\end{notation}

\begin{remark}\label{rem:V0infinity dual} %[
Construction \ref{con:V0infinity}, Proposition \ref{prop:V1n is split}, Lemma \ref{lem:decomposition extension}, Lemma \ref{lem:split subrepresentation}, and Notation \ref{note:V0infinity} can all be performed on $(-\infty,s_{l+1}]$ instead of $[s_{l-1},+\infty)$. %]
These representations will be denoted $V^{i,n}_l$ and $V^{0,\infty}_l$.
\end{remark}

\begin{lemma}\label{lem:step one of straight}
 Let $V$ be a pwf representation with support on an open interval which has no sinks or sources. Let $c$ be any point in the open interval. Then $V=V_0\oplus V_1\oplus V_2$ where $V_0$ is a direct sum of finitely many interval representations having $c$ in its support and $V_1$, $V_2$ are representations having support strictly below $c$, strictly above $c$, respectively.
\end{lemma}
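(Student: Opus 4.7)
My plan is to use the finite-dimensionality of $V(c)$ to identify a finite collection of intervals through $c$ that assemble into $V_0$, and then to peel $V_0$ off on each side of $c$ separately. Without loss of generality the orientation on the open interval of support (call it $(a,b)$, which by assumption contains no sinks or sources) is straight descending, so $\preceq$ coincides with $\leq$ on $(a,b)$. The vector space $V(c)$ carries two stabilizing filtrations: the decreasing right image filtration $\{V(y,c)(V(y))\}_{y \in (c,b)}$ and the increasing left kernel filtration $\{\ker V(c,x)\}_{x \in (a,c)}$. Apply Lemma \ref{lem:Y} to choose a basis $v_1,\ldots,v_n$ of $V(c)$ respecting both filtrations simultaneously. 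For each $v_i$, let $I^L_i := \{x < c : v_i \notin \ker V(c,x)\}$, $I^R_i := \{y > c : v_i \in V(y,c)(V(y))\}$, and $I_i := I^L_i \cup \{c\} \cup I^R_i$. Using Lemma \ref{lem:compatible system of elements} on the right to lift $v_i$ upward compatibly, and setting $w_{i,x} := V(c,x)(v_i)$ on the left, I construct a pointwise one-dimensional subrepresentation $V^0_i \cong M_{I_i}$; set $V_0 := \bigoplus_i V^0_i$, which has $V_0(c) = V(c)$, with linear independence at each point following from the basis respecting the filtrations.

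For the right side, define $V_2(y) := \ker V(y,c)$ for $y \in (c,b) \cap \supp V$ and zero elsewhere. This is a subrepresentation because $V(y',c) = V(y,c) \circ V(y',y)$ carries $\ker V(y',c)$ into $\ker V(y,c)$ for $c < y < y'$. Dimension counting---using that the basis respects the right image filtration so $\dim V_0(y) = \dim V(y,c)(V(y))$, and that $V(y,c)$ is injective on $V_0(y)$ (the lifts $w_{i,y}$ map to the linearly independent basis vectors $v_i$)---together with rank--nullity yields $V|_{[c,b)} = V_0|_{[c,b)} \oplus V_2$ as subrepresentations.

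The main obstacle is the left side: the analogous kernel construction fails because structure maps flow out of $c$ rather than into it. My plan is to show that $V_0|_{(a,c]}$ is injective in the category of pointwise finite representations on $(a,c]$, forcing the short exact sequence $0 \to V_0|_{(a,c]} \to V|_{(a,c]} \to V_1 \to 0$ to split and producing a complementary subrepresentation $V_1$ with $V_1(c) = 0$, hence supported in $(a,c)$. Via the pointwise dualization functor $D$, this reduces to showing that $\bigoplus_i M_{I^L_i \cup \{c\}}$ is projective in pointwise finite representations on the opposite quiver $(a,c]^{\mathrm{op}}$, where $c$ now plays the role of a sink and each summand has identity (hence injective) structure maps into $c$; the analog of Theorem \ref{thm:characterization of one sided projectives}, proved by the same image-filtration lifting argument using Lemma \ref{lem:compatible system of elements}, then furnishes projectivity, and a finite direct sum of injectives is injective. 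Finally, extending $V_1$ by zero on $[c,b)$ and $V_2$ by zero on $(a,c]$---both remain subrepresentations since the structure maps across $c$ vanish on one side or the other---a pointwise check gives $V = V_0 \oplus V_1 \oplus V_2$.
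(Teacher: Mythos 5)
Your proof is correct in substance, and it reaches the same conclusion by similar underlying ideas, but it deliberately bypasses the paper's main tool. The paper's proof is the one-line ``combination of Lemmas \ref{lem:Y} and \ref{lem:lem A}'': one applies Lemma \ref{lem:lem A} to the closed intervals on each side of $c$, obtaining on each side a summand supported strictly between the endpoints plus a finite sum of interval indecomposables through $c$; Lemma \ref{lem:Y} then reconciles the two resulting filtrations of $V(c)$ so the two decompositions can be pasted (as in the proof of Lemma \ref{lem:lem C}). You do not invoke Lemma \ref{lem:lem A} at all; instead you reconstruct its effect from scratch on each side of $c$ separately --- on the right by a direct kernel-complement argument, on the left by dualizing to a projectivity statement --- and you replace the two ``decomposition filtrations'' of $V(c)$ by the right image filtration and left kernel filtration before applying Lemma \ref{lem:Y}. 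This is essentially the content of the proof of Lemma \ref{lem:lem A} redone inline. What your version buys is a very explicit description of $V_0$ (each $V_i^0$ is a concrete interval subrepresentation generated from a basis vector $v_i$, with $V_0(x)=V(c,x)(V(c))$ on the left and a lifted system on the right), which makes the statement feel algorithmic; what it costs is that you need the unstated analog of Theorem \ref{thm:characterization of one sided projectives} for representations of $A_\R^{\rm op}$ supported in a half-open interval $(a,c]$ where $c$ is not actually a sink of $A_\R^{\rm op}$. That analog is true (the proof of Proposition \ref{prop:one sided at a} adapts verbatim once you restrict the ambient category to pointwise finite representations supported in $(a,c]$, so that no structure map leaves $c$), but a referee would likely ask you to state and justify it, since the theorem as written requires the anchor point to be a genuine sink. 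If you want to stay closer to the paper, you could instead quote Lemma \ref{lem:lem A} on $[s_n,c]$ and $[c,s_{n+1}]$ and then carry out only the Lemma \ref{lem:Y} reconciliation and pasting step.
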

 
\begin{proof}
This follows from a combination of Lemmas \ref{lem:Y} and \ref{lem:lem A}.
\end{proof}
 
In order to prove Theorem \ref{thm:indecomposables} we prove the following lemma, which recovers the specific case of the real line in Crawley-Boevey's theorem in \cite{Crawley-Boevey2015}.

\begin{lemma}\label{lem:straight}
Let $V$ be as in Lemma \ref{lem:step one of straight}. Then $V$ is a direct sum of interval representations.
\end{lemma}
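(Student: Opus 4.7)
The strategy is to iterate Lemma \ref{lem:step one of straight} at a countable dense sequence of points in $(a,b):=\supp V$ so as to peel off finitely many interval indecomposables at each step, and then to show the residual subrepresentation that remains has only zero transition morphisms and is hence a direct sum of singleton interval indecomposables.

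Fix a dense sequence $\{c_n\}_{n\ge 1}\subset (a,b)$. Starting from $V^{(0)}=V$, inductively build direct sum decompositions $V^{(n-1)}=F_n\oplus V^{(n)}$ with the property that $F_n$ is a finite direct sum of interval indecomposables whose supports contain $c_n$, and $V^{(n)}(c_k)=0$ for all $k\le n$. For the induction step, note that since $V^{(n-1)}(c_k)=0$ for every $k<n$ and the orientation is constant on $(a,b)$, any transition morphism of $V^{(n-1)}$ between two points with some $c_k$ strictly between them factors through $V^{(n-1)}(c_k)=0$ and vanishes. Consequently $V^{(n-1)}$ splits as a direct sum over the open connected components of $(a,b)\setminus\{c_1,\dots,c_{n-1}\}$. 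Each such component is itself an open interval of constant orientation, so Lemma \ref{lem:step one of straight} applied to the component containing $c_n$ at the point $c_n$ produces the desired $F_n$ together with complementary summands supported strictly on either side of $c_n$; these complementary summands, together with the restrictions of $V^{(n-1)}$ to the other components, constitute $V^{(n)}$. (Take $F_n=0$ if $V^{(n-1)}(c_n)=0$.)

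Define $V^{(\infty)}(x)=\bigcap_{n\ge 0} V^{(n)}(x)$. Since each $V(x)$ is finite-dimensional, the decreasing chain $V^{(n)}(x)$ stabilizes at some $n_0(x)$, making $V^{(\infty)}$ a genuine subrepresentation of $V$; iterating the finite-stage splittings at $x$ then gives the pointwise decomposition $V(x)=V^{(\infty)}(x)\oplus\bigoplus_{n\ge 1}F_n(x)$, in which only finitely many $F_n(x)$ are nonzero. These pointwise splittings glue into a direct sum decomposition of representations $V=V^{(\infty)}\oplus\bigoplus_{n\ge 1}F_n$. By construction $V^{(\infty)}(c_n)=0$ for every $n$, and the density of $\{c_n\}$ then forces every transition morphism of $V^{(\infty)}$ between two distinct comparable points to factor through some $V^{(\infty)}(c_n)=0$, hence to vanish. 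Consequently $V^{(\infty)}$ has only zero transition maps between distinct points, so choosing a basis of each $V^{(\infty)}(x)$ decomposes $V^{(\infty)}$ as a (possibly infinite) direct sum of singleton interval indecomposables $M_{\{x\}}$, which combined with the $F_n$ gives the desired decomposition of $V$ into interval indecomposables.

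The principal technical hurdle will be the passage from the stage-wise splittings to an honest subrepresentation decomposition of $V$ in the limit; this is guaranteed by the pointwise finite-dimensionality hypothesis, which forces the decreasing chain $V^{(n)}(x)$ to stabilize at each $x$ and the $F_n(x)$ to be nonzero for only finitely many $n$, so that the infinite direct sum $\bigoplus_n F_n$ reduces at each point to a finite direct sum compatible with the stabilized $V^{(\infty)}(x)$.
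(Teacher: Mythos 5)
Your proof is correct and follows essentially the same strategy as the paper: iterate Lemma~\ref{lem:step one of straight} at a countable dense subset of the open interval, peel off finitely many interval indecomposables at each stage, use pointwise finite-dimensionality to see the chain stabilizes at each point so the peeled-off summands really do form an internal direct sum decomposition with a residual subrepresentation, and conclude that the residual vanishes on the dense set and hence has only zero transition maps, forcing it to be a sum of simples. The only cosmetic difference is that the paper fixes the particular dyadic-style dense set $c(k,n)=\tan(\tan^{-1}a+k(\tan^{-1}b-\tan^{-1}a)/2^n)$ and organizes the induction by the level $n$, whereas you enumerate an arbitrary dense sequence and split over the connected components of the complement of the points already processed; these are interchangeable bookkeeping choices.
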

 
\begin{proof}
Let $(a,b)$ be an open subinterval of $\R$ containing the support of $V$ where $a$ and $b$ are allowed to be $-\infty$ and $+\infty$, respectively.
Let $C$ be the set of all points in $(a,b)$ of the form $c(k,n):=\tan(\tan^{-1}a+k(\tan^{-1}b-\tan^{-1}a)/2^n)$ for positive integers $k,n$ where $k$ is odd, which is a dense subset of $(a,b)$.

By induction on $n$ using Lemma \ref{lem:step one of straight}, we get a decomposition of $V$ in the form $V= V_\infty\oplus \bigoplus V_{k,n}$ where each $V_{k,n}$ is a direct sum of finitely many interval indecomposables having $c(k,n)$ in its support and no point in the form $c(j,m)$ in its support where $m<n$ and $V_\infty$ has no elements of the form $c(k,n)$ in its support.
In that case, $V_\infty$ is a direct sum of simple representations since for any $a\le c<d\le b$, there is a number of the form $c(k,n)$ in the open interval $(c,d)$.
So, the morphism $V_\infty(c,d):V_\infty(c)\to V_\infty(d)$ must be zero since it factors though $V_\infty(c(k,n))=0$.
\end{proof} 

\begin{theorem}\label{thm:indecomposables}\label{thm:GeneralizedBarCode}
Let $A_\R$ be a continuous quiver of type $A$ and $V$ be a representation in $\ReppwfAR$.
Then $V$ is the direct sum of interval indecomposables (Definition \ref{def: MI}).
\end{theorem}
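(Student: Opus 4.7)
The plan is to combine the preceding structural lemmas to peel off interval indecomposable summands of $V$ in three stages, organized by how their supports sit relative to the set $S$ of sinks and sources.

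Stage 1 (interior summands). For each pair of consecutive elements $s_n,s_{n+1}$ of $S\cup\{\pm\infty\}$, I would apply Lemma \ref{lem:lem A} (with the straightforward modifications when $s_n=-\infty$ or $s_{n+1}=+\infty$) to the restriction $V_{[s_n,s_{n+1}]}$ to write it as $A_n\oplus B_n$, where $\supp A_n\subset (s_n,s_{n+1})$. Applying Lemma \ref{lem:W(b)=0 implies W is summand} at both endpoints shows that each $A_n$ is in fact a direct summand of $V$, and Lemma \ref{lem:straight} decomposes $A_n$ into interval indecomposables. After extracting every $A_n$, I may assume $V$ has no direct summand supported entirely in an open interval between consecutive elements of $S$; equivalently, the hypothesis of Construction \ref{con:V0infinity} is satisfied at every $s_l$.

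Stage 2 (bounded summands meeting $S$). For every $s_l\in S$, Proposition \ref{prop:V1n is split} says that the subrepresentations $V^l_{i,n}$ with $1\le i\le n$ and their left-hand counterparts $V^{i,n}_l$ are split subrepresentations of $V$, each a finite direct sum of interval indecomposables whose supports contain $s_l$ and are bounded on both sides. I would split all of these off simultaneously for every $l$ and every $n$, using Lemmas \ref{lem:Y} and \ref{lem:Lemma X}(5) to harmonize, at each sink or source $s_l$, the decompositions arriving from $s_{l-1}$ and from $s_{l+1}$ into a single decomposition compatible with both induced filtrations on $V(s_l)$.

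Stage 3 (unbounded summands). What remains, call it $V'$, satisfies $V'{}^l_{i,n}=0=V'{}^{i,n}_l$ for every $s_l$ and every $1\le i\le n$. Lemma \ref{lem:split subrepresentation} and Notation \ref{note:V0infinity}, together with Remark \ref{rem:V0infinity dual}, then produce direct-sum decompositions into interval indecomposables of the split summands $V'{}^l_{0,\infty}$ and $V'{}^{0,\infty}_l$, whose supports are unbounded above, respectively below. Combining Stages 1--3 gives a decomposition of $V$ into interval indecomposables: any $x\in\supp V$ either lies in an open interval between consecutive elements of $S$ (covered in Stage 1), or lies in a bounded interval of $\supp V$ meeting $S$ (Stage 2), or lies in an unbounded component of $\supp V$ anchored at some $s_l$ (Stage 3).

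The main obstacle is the global bookkeeping: the decompositions chosen locally at different $s_l$ must fit together into a single direct-sum decomposition rather than overlap or miss pieces. This is handled by ordering the sinks and sources according to the indexing supplied by Definition \ref{def:AR} and iteratively refining via Lemma \ref{lem:Y}, so that each freshly extracted summand is compatible with those extracted at neighboring sinks/sources. The edge cases ($S=\emptyset$; $S$ bounded above or below so that one of $s_{\pm\infty}$ does not exist; $S$ finite) reduce to direct applications of Lemma \ref{lem:straight} on each maximal open interval free of sinks and sources together with one-sided versions of Stages 2 and 3.
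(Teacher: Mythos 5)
Your Stages 1 and 2 track the paper's Parts 1 and 2 closely, and Stage 3 reproduces the paper's Part 3 (extraction of summands bounded on exactly one side via Lemma \ref{lem:split subrepresentation}, Notation \ref{note:V0infinity}, and Remark \ref{rem:V0infinity dual}). But the paper's proof has a fourth part that your proposal omits: after Stages 1--3, the residual $V''$ can still be nonzero, and its indecomposable summands would have support unbounded on \emph{both} sides (i.e., support $=\R$). These are not ``anchored at some $s_l$'' in the sense your Stage 3 handles; the construction $V''^{l}_{0,\infty}$ captures summands whose support is bounded below (by some $s_{l-1}$) but unbounded above, and dually for $V''^{0,\infty}_l$, so a two-sided-unbounded summand is invisible to both. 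The paper's Part 4 handles this residue separately: once all bounded and one-sided summands are gone, every structure map $V''(x,y)$ is an isomorphism, so by restricting to $[s_l,+\infty)$ and $(-\infty,s_l]$ and matching the two decompositions at $s_l$ (via the now-finite dimension $\dim V''(s_l)$), one obtains $V''\cong\bigoplus_{\dim V''(s_l)} M_{(-\infty,+\infty)}$.

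Your closing trichotomy (``any $x\in\supp V$ either lies in an open interval between consecutive elements of $S$, or lies in a bounded interval of $\supp V$ meeting $S$, or lies in an unbounded component anchored at some $s_l$'') is an argument about covering \emph{points} of the support, not about exhausting \emph{summands}, and these are not the same thing: the support of $V$ may be covered pointwise while a two-sided-unbounded indecomposable summand remains unaccounted for. You need an explicit final step showing the remaining representation decomposes, not merely that its support has been visited.
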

\begin{proof}
\textbf{Outline}: We complete this proof in four parts.
In Part 1, we consider the indecomposable summands whose support is contained entirely between a sink and source.
In Part 2, we consider the indecomposable summands whose support contains at least one but only finitely many sinks and sources.
In Part 3, we consider the indecomposable summands whose support may contain infinitely many sinks and sources, but is bounded on exactly one side.
Finally, in Part 4, we concern ourselves with indecomposable summands whose support is $\R$.
Since the case where $A_\R$ has no sinks or sources in $\R$ has been covered by Lemma \ref{lem:straight}, we assume that $A_\R$ has at least one sink or source in $\R$.

\textbf{Part 1}:
Let $s_n$ and $s_{n+1}$ be an adjacent pair of sink, source or $\pm\infty$; however, only one may be $\pm \infty$ by assumption.
We use the notation $[s_n,s_{n+1}]$ even if one of the endpoints is actually $\pm \infty$.
By Lemma \ref{lem:lem A}, $V_{[s_n,s_{n+1}]}$ decomposes to $A_n\oplus B_n$ where the support of $A_n$ is contained in $(s_n,s_{n+1})$.
By Lemma \ref{lem:W(b)=0 implies W is summand}, $A_n$ is a direct summand of $V$.

Thus, for all $n$ where $s_n$ or $s_{n+1}$ is in $\R$, we have such an $A_n$.
So we have that $V\cong (\bigoplus A_n)\oplus U$.
By Lemma \ref{lem:straight} each $A_n$ decomposes into a direct sum of indecomposable representations.

\textbf{Part 2}: We now assume $V\cong U$ as in the end of Part 1.
If $A_\R$ has finitely many sinks and sources then, by the proof of Lemma \ref{lem:lem C}, $V$ is a finite direct sum of indecomposable representations.
So we shall now assume $A_\R$ has infinitely many sinks and sources.
Choose a sink or source $s_l$ in $\R$.
By Proposition \ref{prop:V1n is split} we know $V^l_{i,n}$ is a split subrepresntation with bounded support for all $1\leq i \leq n$.~
Thus, for all $l$ such that $s_l\in \R$ we obtain such direct summands, none of which are counted twice.
Thus we have $V\cong ( \bigoplus_{l,n} V^l_{1,n})\oplus U$.

\textbf{Part 3}: Now we assume $V\cong U$ as in the end of Part 2. 
Then for each $l\in\Z$ we apply Lemma \ref{lem:split subrepresentation} and obtain $V^l_{0,\infty}$ as in Notation \ref{note:V0infinity}.
By Remark \ref{rem:V0infinity dual} we also obtain $V_l^{0,\infty}$ for each $l$.
Each $V_{0,\infty}^l$ and $V_l^{0,\infty}$ decompose into interval indecomposables and so we have
$V\cong (\bigoplus (V_{0,\infty}^l\oplus V_l^{0,\infty}))\oplus U$.

\textbf{Part 4}: We assume $V\cong U$ as in the end of Part 3.
For any $s_l \in \R$, we know $V^l_{0,\infty}=0$ and $V_l^{0,\infty}=0$. %(
Choose some sink or source $s_l$ in $\R$ and let $X=V_{[s_l,+\infty)}$ and $Y= V_{(-\infty,s_l]}$. %)
We can then construct $X^l_{0,\infty}$ and $Y_l^{0,\infty}$.
Since $V^l_{0,\infty}=0$ and $V_l^{0,\infty}=0$, we see $\dim X^l_{0,\infty}(s_l) = \dim Y_l^{0,\infty}(s_l)$.
In particular, they are both finite.

Furthermore, $V(x,y)$ is an isomorphism for all $y\preceq x$ in $\R$.
Choose a decomposition of $V_{[s_{l-1},s_{l+1}]}$ and use the technique in Lemma \ref{lem:decomposition extension} to extend this decomposition to all of $X$ and all of $Y$.
But together this yields a decomposition of $V$.~

This will give us a bijection $V\cong \bigoplus_{\dim V(s_l)} M_{(-\infty,+\infty)}$.
Thus, $V$ is a direct sum of indecomposable representations.

\textbf{Conclusion}: In Parts 1--3 we decomposed $V$ into $Z\oplus U$ and in Parts 2--4 we decomposed the previous Part's $U$.
In Parts 1--3 we showed that the $Z$ summand was a direct sum of indecomposables and in Part 4 we showed the final $U$ is a direct sum of indecomposables.
Therefore, given any pointwise finite-dimensional representation $V$ of $A_\R$, it is the direct sum of indecomposable representations.
If $V$ itself is indecomposable it appears as one of described indecomposable summands, depending on its support.
\end{proof}

\begin{remark}\label{rem:indecomposableprojectives}
The theorem above, with the aid of Theorem \ref{thm:characterization of one sided projectives}, completely classifies indecomposable projective objects in $\ReppwfAR$ and $\RepbAR$.
They come in three forms, up to isomorphism.
\begin{enumerate}
\item $P_a$ as in Definition \ref{def:projective generated}:
\begin{align*}
P_a(x) &= \left\{\begin{array}{ll} k & x\preceq a \\ 0 & \text{otherwise} \end{array}\right. & 
P_a(x,y) &= \left\{\begin{array}{ll} 1_k & y\preceq x \preceq a \\ 0 & \text{otherwise}. \end{array}\right.
\end{align*} %(((
\item $P_{a)}$ given by
\begin{align*} P_{a)} &= \left\{ \begin{array}{ll}k & x\preceq a, x<a \\ 0 & \text{otherwise} \end{array}\right. & 
P_{a)}(x,y) &= \left\{\begin{array}{ll} 1_k & y\preceq x \preceq a, y\leq x < a \\ 0 & \text{otherwise}. \end{array}\right. \end{align*}
\item $P_{(a}$ given by
\begin{align*} P_{(a} &= \left\{ \begin{array}{ll}k & x\preceq a, a<x \\ 0 & \text{otherwise} \end{array}\right. & 
P_{(a}(x,y) &= \left\{\begin{array}{ll} 1_k & y\preceq x \preceq a, a<x\leq y \\ 0 & \text{otherwise}. \end{array}\right.\end{align*}
\end{enumerate} %)))
Note that unless $a$ is a source at least one of (2) or (3) will define the 0 representation.
If $a$ is a sink then both (2) and (3) will be the 0 representation.

 Additionally, it is worth noting that if $V$ is a subrepresentation of any sum of projectve indecomposables then $V$ is also projective.
This follows from Theorem \ref{thm:characterization of one sided projectives} (1).
Therefore, $\ReppwfAR$ is hereditary.~
\end{remark}

\begin{example}
Let the set of sinks and sources $S=\{0,1\}$, where $s_0=0$ is a sink and $s_1=1$ is a source.
We provide a complete list of indecomposable projectives in $\ReppwfAR$ with this orientation. % ( ( [ [ (
The values $a,b,c\in\R$ below are such that $a<0<b<1<c$. {\begin{center} \begin{tabular}{ccccccccccc} \{ $\{0\}$, & $(-\infty,0]$, & $(a,0]$, & $[a,0]$,  & $[0,b)$, & $[0,b]$, & $[0,1)$, & $[0,+\infty)$, &  $(1,+\infty)$, & $(c,+\infty)$, &  $[c,+\infty)$  \} \\
$P_0$ & $P_{-\infty}$ & $P_{(a}$ & $P_a$ & $P_{b)}$ & $P_b$ & $P_{1)}$ & $P_1$ & $P_{(1}$ & $P_{(c}$ & $P_c$ \end{tabular} \end{center}} 
\end{example}% ] ] ] ] ) )

\begin{remark}\label{rem:indecomposableinjectives}
We also have the indecomposable injective objects in $\ReppwfAR$.
\begin{enumerate}
\item $I_a$ given by:
\begin{align*}
I_a(x) &= \left\{\begin{array}{ll} k & a\preceq x \\ 0 & \text{otherwise} \end{array}\right. & 
I_a(x,y) &= \left\{\begin{array}{ll} 1_k & a\preceq y\preceq x \\ 0 & \text{otherwise} \end{array}\right.
\end{align*} %(((
\item $I_{a)}$ given by
\begin{align*} I_{a)} &= \left\{ \begin{array}{ll}k & a\preceq x, x<a \\ 0 & \text{otherwise} \end{array}\right. & 
I_{a)}(x,y) &= \left\{\begin{array}{ll} 1_k & a \preceq y\preceq x, x\leq y< a \\ 0 & \text{otherwise} \end{array}\right. \end{align*}
\item $I_{(a}$ given by
\begin{align*} I_{(a} &= \left\{ \begin{array}{ll}k & a\preceq x, a<x \\ 0 & \text{otherwise} \end{array}\right. & 
I_{(a}(x,y) &= \left\{\begin{array}{ll} 1_k & a\preceq y\preceq x, a<y\leq x \\ 0 & \text{otherwise} \end{array}\right.\end{align*}
\end{enumerate} %)))
\end{remark}

\subsection{More on $P_{(a}$, $P_{a)}$, and the Pointwise Finite Requirement}\label{sec:pwf requirement}

As mentioned in Section \ref{sec:projectives}, the indecomposable projectives $P_{(a}$ and $P_{a)}$, whichever are nonzero, are not projective in $\RepAR$. They are only projective in the smaller subcategory $\ReppwfAR$.
We will prove this using a specific representation, denoted $\mathfrak P$, that exists only in $\RepAR$.
I.e., it is not pointwise finite-dimensional.
We will use that same representation to show why Theorem \ref{thm:indecomposables} can fail without the pointwise finite assumption.

\begin{construction}\label{con:bad representation}
We will denote the problematic representation by $\mathfrak P$.
First, let $a\in \R$ such that $a$ is not a sink.
Let $p\in \R$ such that $p\preceq a$ and $p\neq a$.
By symmetry, suppose $p<a$. 
Let $\{x_i\}_{i=0}^\infty$ be a strictly increasing sequence converging to $a$ such that $x_0> p$.
Let $M = \bigoplus_{\{x_i\}} M_{[p,x_i]}$. %(
Then the support of $M$ is $[p,a)$. %]
Let $\pi : M(p) \to k$ be a surjection given by sending each $1$ in $M_{[p,x_i]}(p)=k$ to $1\in k$.

Let $\mathfrak P$ be given by
\begin{align*}
\mathfrak P(x) &= \left\{ \begin{array}{ll} k & x=p \\ M(x) & x\neq p\end{array} \right. &
\mathfrak P(x,y) &= \left\{ \begin{array}{ll} 1_k & x=y=p \\ \pi\circ M(x,y) &x\neq y=p \\ M(x,y) & \text{otherwise}. \end{array}\right.
\end{align*} %(
We see that $\mathfrak P$ also has support $[p,a)$. %]
\hfill $\diamond$
\end{construction}

\begin{proposition}\label{prop:no bad maps}
Let $A_\R$ be a continuous quiver of type $A$.
Let $p,a\in \R$ such that $a$ is not a sink, $p\preceq a$, and $p< a$. %(
Then there is no nontrivial morphism $P_{a)}\to \mathfrak P$, where $\mathfrak P$ is from Construction \ref{con:bad representation}.
\end{proposition}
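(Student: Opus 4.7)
The plan is to show that any morphism $f : P_{a)} \to \mathfrak P$ vanishes pointwise. The hypotheses $p \preceq a$ and $p < a$ force both points to live inside a single oriented segment, so there exist a sink $s_0$ (possibly $-\infty$) and the next source $s_1$ (possibly $+\infty$) with $s_0 \le p < a \le s_1$. Consequently $\supp P_{a)} \supseteq [p,a)$ while $\supp \mathfrak P = [p,a)$, and $f(x) = 0$ automatically for $x \notin [p,a)$. It therefore suffices to prove that $v_x := f(x)(1) \in \mathfrak P(x)$ vanishes for every $x \in [p,a)$.

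First I would handle $x \in (p,a)$. Here $\mathfrak P(x) = M(x) = \bigoplus_i M_{[p,x_i]}(x)$, so $v_x = (v_x^i)_i$ with $v_x^i$ possibly nonzero only when $x_i \ge x$. Fixing such a $j$, the strictly increasing sequence $\{x_i\}$ converging up to $a$ lets me pick $y := x_{j+1} \in (x_j, a) \subseteq \supp P_{a)}$. Because $x$ and $y$ lie in the single oriented segment $[s_0,s_1]$ with $x \le y$ and both less than $a$, we have $P_{a)}(y, x) = 1_k$, so the naturality square for $f$ gives $M(y, x)(v_y) = v_x$. Projected onto the $j$-th summand this factors through $M_{[p,x_j]}(y) = 0$ (because $y > x_j$), so $v_x^j = 0$. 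Letting $j$ range over all indices with $x_j \ge x$ forces $v_x = 0$.

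For the remaining case $x = p$, I would pick $y = x_0 \in (p, a) \subseteq \supp P_{a)}$; by the previous paragraph $v_{x_0} = 0$, and naturality together with the identity $\mathfrak P(x_0, p) = \pi \circ M(x_0, p)$ then gives $v_p = \pi \circ M(x_0, p)(0) = 0$. Therefore $f \equiv 0$, as required.

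The whole argument hinges on the interplay between the missing right endpoint of $\supp P_{a)}$, which always supplies a witness $y$ strictly between $x_j$ and $a$, and the fact that each direct summand $M_{[p,x_i]}$ of $M$ expires at $x_i < a$. The only mild verifications are that the partial order restricts to the usual order on $[s_0,s_1]$, so that $P_{a)}(y, x)$ is really $1_k$, and that $x_{j+1}$ exists and lies below $a$; both are immediate from Construction~\ref{con:bad representation}. I do not anticipate a genuine obstacle in executing this plan.
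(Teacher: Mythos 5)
Your proof is correct and uses essentially the same idea as the paper's: push the naturality square one step further up the sequence $\{x_i\}$ so that the summand $M_{[p,x_j]}$ has already expired, forcing the $j$-th coordinate of $f(x)(1)$ to vanish. The only presentational difference is that you establish $f(x)=0$ directly for every $x\in[p,a)$ (and make the propagation to $x=p$ via $\pi$ explicit), whereas the paper phrases the argument as a contradiction after choosing a point $x_m$ where $f(x_m)(1)$ is assumed nonzero; the underlying computation is identical.
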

\begin{proof}
Choose $x_m$ in the sequence from Construction \ref{con:bad representation}. %(
Let $f(x_m):P_{a)}\to \mathfrak P(x)$ be a linear map. %(
Since $P_{a)}=k$, $f(x_m)$ is determined by $f(x_m)(1)$.
Since $\mathfrak P(x) = M(x)$ for $x\neq p$, we see
\begin{displaymath} f(1) = (\underbrace{0,\ldots ,0}_{m-1}, r_m, r_{m+1},\ldots, r_n,0,0,\ldots)\end{displaymath} %(
Then for any linear map $f(x_{n+1}):P_{a)}(x_{n+1})\to \mathfrak P(x_{n+1})$ we know that %(
\begin{displaymath}
f(x_m)\circ P_{a)}(x_{n+1},x_m) \neq \mathfrak P(x_{n+1},x_m) \circ f(x_{n+1})
\end{displaymath} %(
Therefore, there is no morphism of representations $P_{a)}\to \mathfrak P$.
\end{proof}

\begin{proposition}\label{prop:not projective in big category}
Let $A_\R$ be a continuous quiver of type $A$ and $a\in \R$ such that $a$ is not a sink.
Then each nonzero $P_{(a}$ and $P_{a)}$ is not projective in $\RepAR$.
\end{proposition}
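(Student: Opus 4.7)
The plan is to exhibit an epimorphism in $\RepAR$ along which a nonzero morphism out of $P_{a)}$ fails to lift, using the non-pointwise-finite representation $\mathfrak{P}$ from Construction \ref{con:bad representation} together with the vanishing in Proposition \ref{prop:no bad maps}. Suppose $P_{a)}$ is nonzero. Then $a$ is not a sink and $\supp P_{a)}$ is an interval of the form $[c,a)$, where $c$ is the nearest sink below $a$, or $(-\infty,a)$ if no such sink exists. Pick any $p \in \supp P_{a)}$ with $p<a$ and form $\mathfrak{P}$ as in Construction \ref{con:bad representation}, so that both $\mathfrak{P}$ and the interval indecomposable $N=M_{[p,a)}$ have support $[p,a)$.

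Next, I would construct two epimorphisms $f:P_{a)} \twoheadrightarrow N$ and $q:\mathfrak{P} \twoheadrightarrow N$ in $\RepAR$. The map $f$ is defined as the identity on $[p,a)$ and zero on $\supp P_{a)} \setminus [p,a)$; naturality is immediate from the fact that $N$ vanishes below $p$ while $P_{a)}$ is one-dimensional with identity structure maps. The map $q$ is the identity at $p$, and at each $x \in (p,a)$ is the sum-of-coordinates map $\mathfrak{P}(x)=\bigoplus_{\{i:\, x \le x_i\}} k \to k$. Naturality of $q$ follows because $\pi$ was itself defined as the sum of coordinates on $M(p)$, while the structure maps of $M$ away from $p$ are the obvious inclusions of summands of the direct sum defining $M$. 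Both $f$ and $q$ are surjective pointwise, hence epimorphisms.

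If $P_{a)}$ were projective in $\RepAR$, the morphism $f$ would lift through $q$ to some $\tilde f:P_{a)} \to \mathfrak{P}$ with $q\circ\tilde f = f$. Since $f\neq 0$, we would have $\tilde f\neq 0$, contradicting $\Hom(P_{a)},\mathfrak{P})=0$ from Proposition \ref{prop:no bad maps}. Therefore $P_{a)}$ is not projective in $\RepAR$.

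The case of $P_{(a}$ is handled by the symmetric construction already anticipated in Construction \ref{con:bad representation}: run the construction with $p>a$ and $\{x_i\}$ strictly decreasing to $a$, producing a representation $\mathfrak{P}'$ with support $(a,p]$; the proof of Proposition \ref{prop:no bad maps} applies verbatim to give $\Hom(P_{(a},\mathfrak{P}')=0$, and the same lifting-failure argument rules out projectivity. The main technical obstacle I foresee is checking the naturality of $q$ across the distinguished point $p$, since that is where $\pi$ intervenes and where the sink/source orientation at $p$ matters, but the definitions are forced by compatibility with the structure maps of $M$.
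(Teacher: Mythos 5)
Your proof is correct and follows essentially the same route as the paper: form the non-pointwise-finite representation $\mathfrak{P}$ of Construction \ref{con:bad representation}, exhibit the two epimorphisms $P_{a)} \twoheadrightarrow M_{[p,a)}$ and $\mathfrak{P} \twoheadrightarrow M_{[p,a)}$, and then observe that $\Hom(P_{a)},\mathfrak{P})=0$ (Proposition \ref{prop:no bad maps}) makes a lift impossible. The only differences from the paper's write-up are cosmetic: you spell out the sum-of-coordinates description of the epimorphism $\mathfrak{P}\twoheadrightarrow M_{[p,a)}$ (the paper simply uses $\mathfrak{P}(x,p)$, which unwinds to the same thing) and you give explicit formulas and naturality checks where the paper is terser, then invoke symmetry for $P_{(a}$ just as the paper does.
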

\begin{proof}
Let $p\in\R$ such that $p< a$ and $p\preceq a$.
The other case, where $p>a$ and $p\preceq a$, is similar. %((
Then, there is a nontrivial morphism of indecomposable representations $f:P_{a)}\to M_{[p,a)}$. %]
Let $\mathfrak P$ be as in Construction \ref{con:bad representation}. %((
For each $x\in [p,a)$, let $f(x): \mathfrak P(x) \to M_{[p,a)}(x)$ be given by $\mathfrak P(x,p)$. %]]((
Since $\mathfrak P(p) = M_{[p,a)}(x)$ for all $x\in[p,a)$, this is a well-defined morphism of representations. %]]
In particular, it is an epimorphism. %(((

So now we have an epimosphism $P_{a)}\twoheadrightarrow M_{[p,a)}$ and an epimorphism $\mathfrak P\twoheadrightarrow M_{[p,a)}$. %]](
However, there is no nontrivial morphism $P_{a)}\to \mathfrak P$ in $\RepAR$, by Proposition \ref{prop:no bad maps}. %(
Therefore, $P_{a)}$ is not projective.
\end{proof}

\begin{proposition}\label{prop:bad representation is bad}
Let $A_\R$ be a continuous quiver of type $A$ and $\mathfrak P$ as in Construction \ref{con:bad representation}.
Then $\mathfrak P$ is not the direct sum of pointwise one-dimensional indecomposables.
\end{proposition}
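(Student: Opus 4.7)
My plan is to argue by contradiction. Suppose $\mathfrak P \cong \bigoplus_{j} V_j$ with each $V_j \cong M_{I_j}$ pointwise one-dimensional on some interval $I_j \subseteq \supp \mathfrak P = [p,a)$.

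The first step is to pin down the unique summand whose support contains $p$. Since $\dim \mathfrak P(p) = 1$, exactly one $V_j$, which I will call $V_0$, satisfies $p \in I_0$, and the component $\iota_p \colon V_0(p) \hookrightarrow \mathfrak P(p)$ is an isomorphism on $k$. I will then use that $\mathfrak P(x_m,p) = \pi \circ M(x_m,p)$ is surjective onto $k$ for every $m$: indeed, $M(x_m,p)$ embeds $M(x_m) = \bigoplus_{i \ge m} k$ as the coordinates $i \ge m$ of $M(p) = \bigoplus_{i \ge 0} k$, and $\pi$ sums all coordinates. Decomposing this surjection through the summands, only $V_0$ can produce nonzero image at $p$, and it does so only when $x_m \in I_0$. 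Surjectivity therefore forces $x_m \in I_0$ for every $m$, and since $I_0$ is an interval containing $p$ and the sequence $x_m \nearrow a$ and is contained in $[p,a)$, I conclude $I_0 = [p,a)$ and $V_0 \cong M_{[p,a)}$.

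The contradiction is then obtained by a basis chase through the inclusion $\iota \colon M_{[p,a)} \hookrightarrow \mathfrak P$. After rescaling I may assume $\iota_p$ is the identity on $k$; set $w_m := \iota_{x_m}(1) \in \mathfrak P(x_m) = M(x_m) = \bigoplus_{i \ge m} k$, an element with finitely many nonzero coordinates. Functoriality along $M_{[p,a)}(x_n,x_m) = 1_k$ for $n > m$ gives $M(x_n,x_m)(w_n) = w_m$, so $w_m$ must vanish in coordinates $m, m+1, \dots, n-1$ while agreeing with $w_n$ in coordinates $i \ge n$. Taking $n$ larger than the (finite) support of $w_m$ forces $w_m = 0$. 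But functoriality along $M_{[p,a)}(x_m,p) = 1_k$ simultaneously demands $\mathfrak P(x_m,p)(w_m) = \iota_p(1) = 1 \ne 0$, the required contradiction.

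The argument is largely mechanical once $V_0$ is identified; the hard part, such as it is, will be ruling out every other candidate for $V_0$, which is what the surjectivity of $\pi$ buys us. The remainder is the incompatibility between the finitary support of elements of $M(x_m)$ and the requirement that $(w_m)$ be a compatible lift of a generator of $\mathfrak P(p)$, whose mass is forced arbitrarily far from $p$.
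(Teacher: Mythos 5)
Your proof is correct and takes essentially the same approach as the paper: both arguments reduce to showing that any decomposition would force $M_{[p,a)}$ to be a direct summand, and then deriving a contradiction from the nonexistence of any nontrivial morphism $M_{[p,a)}\to\mathfrak P$. The small structural difference is worth noting. The paper asserts without detailed argument that ``one of the components must have support $[p,a)$,'' and it invokes Proposition \ref{prop:no bad maps} (adapted from $P_{a)}$ to $M_{[p,a)}$) for the obstruction to a splitting; your write-up fills in both of these steps explicitly. In particular, your surjectivity argument --- that $\mathfrak P(x_m,p)$ factors through $V_0(x_m,p)$ alone and hence forces $x_m\in I_0$ for every $m$ --- is exactly the missing justification for the paper's identification of the distinguished summand, and your basis chase with the lifts $w_m\in\bigoplus_{i\ge m}k$ is a clean, self-contained rederivation of the obstruction used in Propositions \ref{prop:no bad maps} and \ref{prop:not projective in big category}. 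So the two proofs rely on the same underlying phenomenon (compatible lifts of a generator of $\mathfrak P(p)$ must have mass escaping to infinity, contradicting finite support), and yours is simply the more explicit and self-contained version.
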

\begin{proof} %(
We saw in the proof of Proposition \ref{prop:not projective in big category} that there is an epimorphism $\mathfrak P\twoheadrightarrow M_{[p,a)}$. %](
However, just as in the proof of Proposition \ref{prop:no bad maps} there are no nontrivial morphisms $M_{[p,a)}\to \mathfrak P$. %](
Thus, $M_{[p,a)}$ is not a direct summand of $\mathfrak P$. %](
But if $\mathfrak P$ had a direct sum decomposition, one of the components must have support $[p,a)$. %](
But that would mean the indecomposable is $M_{[p,a)}$. %]
Therefore, $\mathfrak P$ does not decompose into a direct sum of one-dimensional indecomposables.
\end{proof}
 
\subsection{Relation to Decomposition Theorems in Persistent Homology}\label{sec:relation to BC-B}

Theorem \ref{thm:indecomposables} is, in some sense, a combination of the Crawley-Boevey's BarCode theorem from \cite{Crawley-Boevey2015} and Botnan's decomposition theorem in \cite{Botnan}.
Part of our argument actually follows the latter paper.
The BarCode theorem handles representations on the continuum but only a straight orientation.
By contrast, Botnan's decomposition handles the infinite zigzag orientation but only in the discrete setting.

One might think to use Botnan's paper explicitly with Crawley-Boevey's result.
However, this cannot be done directly.
In order to make use of the combination of theorems, several technical lemmas would still be required.
In particular, one would have to argue which pwf representations can be ``lifted'' to a discrete quiver and then prove that the decomposition can be ``pushed back down.''
While intuitive, the technical details in such an argument (see \cite{HansonRock} for a similar argument) are still involved.
We avoided such a proof in order to provide a self-contained foundation of continuous type $A$ quivers as well as an algorithmic proof of Theorem \ref{thm:GeneralizedBarCode}.

While Theorem \ref{thm:indecomposables} recovers a result by Botnan and Crawley-Boevey in \cite{BotnanCrawley-Boevey}, the method of proof is different.
One might consider the proof presented in Section \ref{sec:thetheorem} as a ``direct'' proof while the proof in \cite{BotnanCrawley-Boevey} uses representations of products of posets.

Both of \cite{Crawley-Boevey2015, Botnan} worked with pointwise finite-dimensional representations and each displayed a non-example for a representation that is not pointwise finite-dimensional.
Theorem \ref{thm:indecomposables} adheres to exactly the same restrictions and a relevant non-example appears in Section \ref{sec:pwf requirement} as Construction \ref{con:bad representation} and Proposition \ref{prop:bad representation is bad}.

\section{Finitely Generated Representations: $\repAR$}\label{sec:little rep}

In this section we will prove results about the category of finitely generated representations, denoted $\repAR$.
Many of the properties one could reasonably expect to hold in a continuous version of $\text{rep}_k(A_n)$ do, in fact, hold for $\repAR$.
The properties that change due to the nature of the continuum are Auslander--Reiten sequences and descending chains of subrepresentations.
We provide an incomplete list of the properties that hold or do not hold in the form of a theorem and dedicate the rest of this section to proving each of the items in the theorem.

\begin{theorem}\label{thm:little rep}
Let $A_\R$ be a continuous quiver of type $A$ and denote by $\repAR$ the category of finitely generated representations (Definition \ref{def:finitelygeneratedreps}).
Then the following hold.
\begin{enumerate}
\item For indecomposable representations $M_I$ and $M_J$ in $\ReppwfAR$, $\RepbAR$, or $\repAR$, we have $\Hom(M_I,M_J)\cong k$ or $\Hom(M_I,M_J)=0$ (Proposition \ref{prop:homiskor0}).
\item Every morphism $f:V\to W$ in $\ReppwfAR$, $\RepbAR$, or $\repAR$ has a kernel, a cokernel, and coinciding image and coimage in that category. (Lemma \ref{lem:abelian})%Every morphism $f:M_I\to M_J$ of indecomposables in $\ReppwfAR$, $\RepbAR$, or $\repAR$ has a kernel, a cokernel, and coinciding image and coimage (Lemma \ref{lem:abelian}).
\item The category $\repAR$ Krull-Schmidt, \emph{but not} artinian (Lemma \ref{lem:krull-schmidt}, Proposition \ref{prop:not artinian}).
\item The global dimension of $\repAR$ is 1 (Proposition \ref{prop:projectiveresolution}).
\item The $\Ext$ space of two indecomposables $M_I$ and $M_J$ in $\ReppwfAR$, $\RepbAR$, or $\repAR$ is either isomorphic to $k$ or is 0 (Proposition \ref{prop:smallext}).
\item While some Auslander--Reiten sequences exist (Proposition \ref{prop:ARexistence}), some indecomposables have \emph{neither} a left \emph{nor} a right Auslander--Reiten sequence (Proposition \ref{prop:no AR sequences}).
\end{enumerate}
\end{theorem}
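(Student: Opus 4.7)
The plan is to prove each of the six items in Theorem~\ref{thm:little rep} separately, combining the classification of indecomposables (Theorem~\ref{thm:GeneralizedBarCode}) with the classification of projectives (Theorem~\ref{thm:characterization of one sided projectives}). For item~(1), I would argue that a nonzero morphism $M_I\to M_J$ is determined up to scalar by its value at a single point in $\supp M_I\cap\supp M_J$, by the same rigidity argument used in Lemma~\ref{lem:V is schurian}, so $\dim\Hom(M_I,M_J)\le 1$. A case analysis of how $I$ and $J$ can overlap under the orientation (which endpoint of the intersection lies in which interval, and whether the orientation at the boundary pushes into $I\setminus J$ or $J\setminus I$) determines when a nonzero morphism exists. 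For item~(2), since $\RepAR$ is abelian, kernels and cokernels exist pointwise; the subcategories $\ReppwfAR$ and $\RepbAR$ are closed under subobjects and quotients because pointwise dimensions can only decrease, and $\repAR$ inherits the same closure from the hereditary property (Remark~\ref{rem:indecomposableprojectives}) together with the fact that quotients of finitely generated objects are finitely generated.

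For item~(3), Krull--Schmidt combines Theorem~\ref{thm:GeneralizedBarCode} (existence of interval decomposition) with Lemma~\ref{lem:V is schurian} (endomorphism rings of interval indecomposables are $k$, hence local). For non-artinianness, I would exhibit an indecomposable $V$ with a strictly increasing sequence $x_n$ in $\supp V$ and the properly descending chain $V\supsetneq V_1\supsetneq V_2\supsetneq\cdots$ of subrepresentations supported on the tail past $x_n$. For item~(4), given any indecomposable $M_I$ I would construct its projective cover by extending $I$ in the direction dictated by the orientation until all structure maps become injective, using Theorem~\ref{thm:characterization of one sided projectives}; the kernel is again a single interval representation with injective structure maps, hence projective. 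For a general finitely generated $V$, decompose it via Theorem~\ref{thm:GeneralizedBarCode} and assemble projective covers summand by summand. Global dimension is therefore at most $1$, and strictly $1$ because simple representations at non-minimal points have nontrivial self-$\Ext$.

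For item~(5), the two-term projective resolution $0\to K\to P\to M_J\to 0$ from item~(4) reduces the computation of $\Ext^1(M_J,M_I)$ to the cokernel of $\Hom(P,M_I)\to\Hom(K,M_I)$. Since $K$ is itself a single interval indecomposable (by the analysis in item~(4)) and $P$ is a single interval indecomposable, item~(1) shows that both hom spaces are either $0$ or $k$, and the cokernel is therefore either $0$ or $k$.

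For item~(6), positive examples of AR sequences can be produced directly: if $M_I$ has a closed endpoint at a sink or source $s_n$ interior to $\supp M_I$, one can write down a short exact sequence with outer terms obtained by shrinking and expanding $I$ past $s_n$, and verify it is almost split. The hard part, and the main obstacle of the whole theorem, is non-existence: for an interval indecomposable $M_I$ with both endpoints in the continuum and of ``open'' type (i.e. away from $S$), I would suppose for contradiction that a right almost split morphism $f:E\to M_I$ exists, decompose $E$ into finitely many interval summands, and identify the finitely many endpoint values appearing in these summands. Using density of $\R$, I would then construct a test morphism $g:M_J\to M_I$ from an interval $J$ whose endpoint lies strictly between the endpoint of $I$ and the closest endpoint appearing in any summand of $E$; such a $g$ cannot factor through $f$ because no summand of $E$ admits a surjection onto a sufficiently large portion of $J$. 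Formalizing this density-based defeat of every candidate middle term, and handling the dual left almost split case simultaneously, is where the real technical work lies.
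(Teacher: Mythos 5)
Your outline for items (1), (2), and (6) tracks the paper's proofs closely (rigidity from a single point for Hom spaces; pointwise dimension plus hereditariness for kernels and cokernels; a density argument against almost split morphisms). But there are two genuine gaps.

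First, for item (3) (Krull--Schmidt), Theorem~\ref{thm:GeneralizedBarCode} only gives that a pwf representation decomposes into a possibly \emph{infinite} direct sum of interval indecomposables. The whole content of Lemma~\ref{lem:krull-schmidt} is to show that a \emph{finitely generated} $V$ decomposes into a \emph{finite} such sum, and that argument is nontrivial: it uses that each projective generator $Q_i$ has at most three sinks and sources in its support, hence $V$ has finitely many, and then a convergence/bounded-dimension argument to rule out infinitely many summands between a fixed adjacent sink and source. Your sketch omits this finiteness argument entirely.

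Second, and more seriously, for items (4) and (5) you assert that the projective cover of an indecomposable $M_I$ and its syzygy are each a \emph{single} interval indecomposable. This is false whenever $\supp M_I$ contains sinks or sources in its interior: at each interior source $s$ the structure maps go both left and right, so no single interval projective can cover that part and the cover must acquire a summand $P_s$ for each such source, with the syzygy picking up a summand for each interior sink (this is exactly Construction~\ref{con:soVsiV}). Consequently $\Hom(P_0(V),W)$ and $\Hom(P_1(V),W)$ are in general $k^m$ and $k^n$ with $m,n>1$, so your claim that these Hom spaces are $0$ or $k$ breaks down, and the reduction ``cokernel of a map between two spaces each of dimension $\le 1$'' does not apply. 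The paper's Proposition~\ref{prop:smallext} instead has to analyze the actual ``staircase'' matrix of the map $\Hom(P_0(V),W)\to\Hom(P_1(V),W)$ to see that its cokernel has dimension $0$ or $1$. You need an argument of this kind; the one you propose would silently fail as soon as $I$ contains an interior source.

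A minor point: for the non-artinian claim, the tails you describe must be subrepresentations, which forces you to choose the direction in which the tails are cut consistently with $\preceq$ (the paper simply takes a descending chain $P_z\subsetneq P_{z'}\subsetneq P_a$ of indecomposable projectives, which sidesteps the issue). This is fixable but should be stated.
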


\subsection{Requisites and Definition}

In this subsection we define the category of finitely generated representations of a continuous type $A$ and prove Theorem \ref{thm:little rep} (1) -- (3).

\begin{notation}
We may use $|$ instead of $($, $)$, $[$, or $]$ to write an interval.
When this happens, we mean that the endpoint may or may not be included; either we are making no assumptions about endpoints or it is clear what choice is possible from context.
I.e., for all $a,b\in\R$, $|a,b|$ can be one of four possibilities.
However, when we write our intervals, we allow $a=-\infty$ and $b=+\infty$ so long as we obtain a subset of $\R$.
So, the notation $|a,b|$ will never mean $[-\infty,b|$, $|a,+\infty]$, or $[-\infty,+\infty]$.
\end{notation}

\begin{proposition}\label{prop:homiskor0}
Let $V$ and $W$ be indecomposable representations in $\ReppwfAR$.
Then either $\Hom(V,W)\cong k$ or $\Hom(V,W)=0$.
\end{proposition}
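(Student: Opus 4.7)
The plan is to reduce, via Theorem \ref{thm:indecomposables}, to the case where $V = M_I$ and $W = M_J$ for intervals $I, J \subseteq \R$ (since every indecomposable in $\ReppwfAR$ is isomorphic to some interval indecomposable), and then analyze $\Hom(M_I, M_J)$ directly from the commutativity of the defining squares in Definition \ref{def:morphism}.

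A morphism $f : M_I \to M_J$ is determined by scalars $\lambda_x \in k$ for $x \in I \cap J$, since $f(x)$ is forced to be the zero map whenever $x \notin I$ or $x \notin J$. The commutativity condition for each pair $y \preceq x$ splits into cases according to how $x, y$ sit relative to $I \cap J$. The only cases yielding nontrivial constraints are: (a) $x, y \in I \cap J$, where both $M_I(x,y)$ and $M_J(x,y)$ are the identity, giving $\lambda_y = \lambda_x$; (b) $y \in I \cap J$ with $x \in I \setminus J$ and $y \preceq x$, where $M_I(x,y) = 1_k$ while $M_J(x,y) = 0$, forcing $\lambda_y = 0$; and (c) the symmetric configuration $x \in I \cap J$, $y \in J \setminus I$, $y \preceq x$, forcing $\lambda_x = 0$. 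In all other configurations both sides of the square vanish automatically.

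The key observation is that $I \cap J$ is $\preceq$-path-connected: it is an interval in $\R$, and between any two of its points there are only finitely many elements of $S$ (since $S$ has no accumulation points), so consecutive monotone $\preceq$-segments are linked at their common endpoint in $S$, which is $\preceq$-comparable to points on both sides. Chaining the constraints (a) through these pivots forces the scalars $\lambda_x$, $x \in I \cap J$, to equal a single constant $\lambda \in k$. If any instance of constraint (b) or (c) is realized by $I, J$, then $\lambda = 0$ and $\Hom(M_I, M_J) = 0$; otherwise the constant assignment $\lambda = 1$ gives a well-defined nonzero morphism, so $\Hom(M_I, M_J) \cong k$. The main obstacle is the careful enumeration of the boundary squares, especially when the endpoints of $I$ or $J$ coincide with or are adjacent to sinks and sources of $A_\R$; this essentially amounts to checking that the partial order $\preceq$ behaves as expected near each endpoint of the intersection.
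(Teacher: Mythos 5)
Your proposal is correct and follows essentially the same approach as the paper: reduce (via Theorem~\ref{thm:indecomposables}) to interval indecomposables, observe that commutativity forces the local scalars to agree, and propagate this determination across the finitely many sinks and sources between any two points, using the no-accumulation-points hypothesis on $S$. You are somewhat more explicit than the paper about enumerating the obstruction squares that force $\Hom = 0$, but the core mechanism is identical.
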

\begin{proof}
Suppose $\Hom(V,W)\neq 0$ and choose a nontrivial $f:V\to W$.
Then there is $x\in\R$ such that $f(x):V(x)\to W(x)$ is not 0.
Since $V(x)\cong k\cong W(x)$ we see $f(x)$ is an isomorphism.
For all $y\preceq x$, $W(x,y)\circ f(x)=f(y)\circ V(x,y)$.
If $V(y)\neq 0$ and $W(y)\neq 0$ then $f(y)= W(x,y)\circ f(x)\circ V(x,y)^{-1}$.
For all $z$ such that $x\preceq z$, $W(z,x)\circ f(z)=f(x)\circ V(z,x)$.
Then again if the vector spaces are nontrivial we have $f(z)=W(z,x)^{-1}\circ f(x)\circ V(z,x)$.

So for the sink and source $s\preceq x\preceq s'$ we see each of $f(s)$ and $f(s')$ are either 0 or determined by $x$.
Since the set of sinks and sources is discrete with no accumulation points we can use our arguments in the previous paragraph repeatedly and see that each nontrivial $f(y)$ is determined by $f(x)$.
Since $\Hom(V(x),W(x))\cong k$ and every nontrivial $f(y)$ is determined by $f(x)$, we see $\Hom (V,W)\cong k$.
\end{proof}

\begin{deff}\label{def:finitelygeneratedreps}
We define $\repAR$ as the full subcategory of $\ReppwfAR$ whose objects are representations $V$ that are finitely generated by indecomposable projectives (listed in Remark \ref{rem:indecomposableprojectives}).
\end{deff}

\begin{lemma}\label{lem:abelian} Let $f:V\to W$ be a morphism in $\mathcal C$ where $\mathcal C=\ReppwfAR$, $\RepbAR$, or $\repAR$.
%For any morphism $f:M_{|a,b|}\to M_{|c,d|}$ of indecomposable representations in $\ReppwfAR$
\begin{itemize}
\item $f$ has a kernel in $\mathcal C$,%$\ReppwfAR$,
\item $f$ has a cokernel in $\mathcal C$, and%$\ReppwfAR$, and
\item the image and coimage of $f$ coincide and lie in $\mathcal C$.
\end{itemize}
\end{lemma}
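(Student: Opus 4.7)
The plan is to perform all constructions pointwise in the ambient abelian category $\RepAR$ and then verify the closure of $\mathcal C$ under the relevant subquotients. Since $\RepAR$ is abelian (noted just after Definition \ref{def:morphism}), $f$ has a kernel, cokernel, image and coimage given by $\ker(f)(x)=\ker f(x)$, $\coker(f)(x)=W(x)/\im f(x)$, and $\im(f)(x)=\im f(x)\subseteq W(x)$, with transition maps induced by restricting $V(x,y)$ to $\ker f(x)$ and passing $W(x,y)$ to the quotient. Image and coimage already coincide pointwise in $\mathrm{Vec}_k$, so they coincide as representations. The lemma then reduces to showing that $\ker(f)$, $\im(f)$, and $\coker(f)$ each belong to $\mathcal C$.

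For $\mathcal C=\ReppwfAR$ the verification is immediate: $\ker f(x)\subseteq V(x)$, $\im f(x)\subseteq W(x)$, and $\coker f(x)$ is a quotient of $W(x)$, so pointwise finite-dimensionality is preserved. For $\mathcal C=\RepbAR$, any uniform dimension bound on $V$ and $W$ descends to the same bound on all three subquotients, placing them in $\RepbAR$.

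The substantive case is $\mathcal C=\repAR$. For $\coker(f)$ and $\im(f)$ my argument will be short: a finite generating set of $W$ by indecomposable projectives maps onto a generating set of $W/\im f$, and similarly a finite generating set of $V$ maps onto a generating set of $\im(f)$, so both remain in $\repAR$. The kernel is where I expect the main obstacle, because a subobject of a finitely generated object of a hereditary category need not be finitely generated in general. My plan is to fix a finite projective cover $\pi\colon P\twoheadrightarrow V$ with $P$ a finite direct sum of the indecomposable projectives of Remark \ref{rem:indecomposableprojectives}, form the composite $P\xrightarrow{\pi}V\xrightarrow{f}W$, and study its kernel $K\subseteq P$. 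Because $\ReppwfAR$ is hereditary (Remark \ref{rem:indecomposableprojectives}), $K$ is projective in $\ReppwfAR$, and by Theorem \ref{thm:characterization of one sided projectives} it decomposes as a sum of indecomposable projectives of the three types listed in Remark \ref{rem:indecomposableprojectives}. Restricting $\pi$ to $K$ yields a surjection $K\twoheadrightarrow \ker(f)$, so once $K$ is known to be finitely generated the same quotient argument as for $\coker(f)$ finishes the proof.

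The delicate step I expect to have to argue carefully is the finiteness of the decomposition of $K$. The pointwise-finite-dimensional bound $\dim K(x)\le \dim P(x)<\infty$ together with the fact that every indecomposable projective summand of $K$ has a distinguished ``top'' point which must lie in $\supp P$ ought to force only finitely many summands at each point of $\R$; combining this with the finite length of $P$ and the discrete, accumulation-free nature of $S$ should give a globally finite decomposition. Once this is established, $\ker(f)$ is realized as a quotient of a finite direct sum of indecomposable projectives and so lies in $\repAR$, completing the lemma for all three categories.
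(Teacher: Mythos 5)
Your proposal follows essentially the same route as the paper: treat $\ReppwfAR$ and $\RepbAR$ by pointwise dimension bounds, and for $\repAR$ observe that cokernel and image are quotients of finitely generated objects and so are finitely generated, leaving the kernel as the substantive case, which you handle by choosing a finite projective surjection onto $V$, composing with $f$, taking the kernel $K$ of the composite (projective because $\ReppwfAR$ is hereditary), and surjecting $K$ onto $\ker f$.

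The one step you explicitly flag --- that a projective subrepresentation $K$ of a finitely generated projective $P$ is itself finitely generated --- is in fact also asserted without further justification in the paper's proof, so you have correctly identified the real content. However, your heuristic for it is not quite aimed at the right feature. Counting ``top'' points $c_j$ of the indecomposable summands of $K$ does not directly control their number, since infinitely many distinct tops can coexist in $\supp P$ while the representation stays pointwise finite (compare the direct sum of simples in Example \ref{xmp:boundedreps}(2)). The correct invariant is the \emph{sink} side of each summand: by Remark \ref{rem:indecomposableprojectives}, every nonzero indecomposable projective has support that either contains a sink $s_{2k}\in\R$ or is unbounded toward $-\infty$ or $+\infty$. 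Since $\supp K\subseteq\supp P$ and $\supp P$ meets only finitely many sinks (at most two per summand of $P$), pointwise finiteness $\dim K(s_{2k})\le\dim P(s_{2k})<\infty$ bounds the number of summands of $K$ containing each such sink. For summands whose support is unbounded (say toward $-\infty$), if there were infinitely many with generators $c_j$, either $\inf c_j>-\infty$ and evaluating just below $\inf c_j$ gives infinitely many nonzero summands at one point, or $\inf c_j=-\infty$ and then $\dim K$ grows without bound as one moves left; either contradicts $\dim K(x)\le\dim P(x)$. This gives the global finiteness you need and closes the gap in both your sketch and the paper's terse assertion.
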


\begin{proof}

First note that $f$ is a morphism in $\RepAR$.
By a dimension argument for $V(x)$, $W(x)$, $\ker f(x)$, and $\coker f(x)$ at each $x\in \R$ the statement must be true for $\mathcal C=\ReppwfAR$ and $\mathcal C=\RepbAR$.

Now suppose $\mathcal C=\repAR$.
Since $\ReppwfAR$ is abelian the image and coimage of $f$ coincide.
Since $V\twoheadrightarrow \im f$ and $V$ is finitely generated, so is $\im f$.
Similarly, since $W$ is finitely generated by some $\bigoplus_{i=1}^n P_i$ there is a surjection $\bigoplus_{i=1}^n P_i\twoheadrightarrow \coker f$.

Suppose $g:\bigoplus Q_i\twoheadrightarrow V$ generates $V$.
Then $\ker(f\circ g)$ is a subrepresentation of a projective; since $\ReppwfAR$ is hereditary this means $\ker(f\circ g)$ is projective.
Also $\ker(f\circ g)$ maps to $\ker f$.
For any $0\neq \hat{v}\in \ker f(x)$ there is $v\in V(x)$ from the inclusion.
Then there is $\tilde{v}\in \bigoplus Q_i(x)$ that maps to $v$.

Let $\bigoplus Q_i'=\ker (f\circ g)$.
Any projective subrepresentation of a finitely generated projective is finitely generated, so $\bigoplus Q_i'$ is finitely generated.
We also know that since $\tilde{v}\mapsto v\mapsto 0$, there exists $\bar{v}\in \bigoplus Q_i'(x)$ that maps to $v$ and so maps to $\hat{v}$.
Thus, $\bigoplus Q_i'\twoheadrightarrow \ker f$ so $\ker f$ is also finitely generated.
Therefore, $\ker f$, $\im f$, and $\coker f$ are all generated by finitely generated and so in $\repAR$.
\end{proof}

\begin{lemma}\label{lem:krull-schmidt}
Let $V$ be a representation in $\repAR$.
Then $V$ is isomorphic to a finite direct sum of interval indecomposables.
Furthermore, $\repAR$ is Krull-Schmidt.
\end{lemma}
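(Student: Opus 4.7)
The plan is to combine Theorem \ref{thm:indecomposables} with finite generation to force the interval decomposition of $V$ to be finite, and then to derive Krull--Schmidt from the local endomorphism rings of Lemma \ref{lem:V is schurian}.

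Applying Theorem \ref{thm:indecomposables} to $V\in\repAR\subseteq\ReppwfAR$ gives a decomposition $V\cong\bigoplus_{\alpha\in A}M_{I_\alpha}$; the remaining task is to show $|A|<\infty$. Fix a surjection $\pi\colon P:=\bigoplus_{i=1}^{n}P_i\twoheadrightarrow V$ with each $P_i$ one of the indecomposable projectives listed in Remark \ref{rem:indecomposableprojectives}. Since each $P_i$ is pointwise at most one-dimensional, $\dim V(x)\le n$ at every $x\in\R$. For each $\alpha\in A$, composing $\pi$ with the canonical projection $V\twoheadrightarrow M_{I_\alpha}$ produces a surjection from $P$ onto $M_{I_\alpha}$, so some restriction $P_i\to M_{I_\alpha}$ is nonzero; by Proposition \ref{prop:homiskor0} this forces $\Hom(P_i,M_{I_\alpha})\cong k$. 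Writing $A_i:=\{\alpha\in A:\Hom(P_i,M_{I_\alpha})\ne 0\}$, we have $A=\bigcup_{i=1}^{n}A_i$, so it suffices to bound each $|A_i|$ by $n$.

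For $P_i=P_c$, Lemma \ref{lem: PXc is adjoint to restriction to c} gives $\Hom(P_c,M_{I_\alpha})\cong M_{I_\alpha}(c)$, so $A_i=\{\alpha:c\in I_\alpha\}$ has size $\dim V(c)\le n$. The delicate case is $P_i=P_{a)}$ (with $P_{(a}$ symmetric), which has no top vertex to evaluate at. A direct check of the compatibility axioms (parallel to the image-filtration/compatible-system analysis in Section \ref{sec:projectives}) shows that a nonzero morphism $P_{a)}\to M_{I_\alpha}$ exists iff $I_\alpha$ contains a subinterval of the form $(b,a)$ with $b<a$. The family $T_\varepsilon:=\{\alpha:(a-\varepsilon,a)\subseteq I_\alpha\}$ is increasing as $\varepsilon\to 0^+$, and each $|T_\varepsilon|\le\dim V(a-\varepsilon)\le n$; an increasing union of subsets of a fixed ground set all of size at most $n$ still has size at most $n$, so $|A_i|=\bigl|\bigcup_{\varepsilon>0}T_\varepsilon\bigr|\le n$. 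Summing over $i$ gives $|A|\le n^2$, which is finite.

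For Krull--Schmidt: Lemma \ref{lem:V is schurian} identifies $\End(M_I)\cong k$, which is local, so the classical Krull--Schmidt theorem applied to the finite decomposition $V\cong\bigoplus_{\alpha\in A}M_{I_\alpha}$ yields uniqueness up to permutation and isomorphism. The category $\repAR$ is closed under finite direct sums and under direct summands (a surjection onto $V=V_1\oplus V_2$ composed with the projection to each $V_j$ witnesses finite generation of $V_j$), upgrading the above to: $\repAR$ is a Krull--Schmidt category. The main obstacle is the half-open projective case, since unlike $P_c$ the projectives $P_{a)}$ and $P_{(a}$ are not cyclic in the naive sense and a morphism out of them is not pinned down by a value at a single top vertex; the cumulative counting argument using $T_\varepsilon$ bypasses this by exploiting that the condition ``$(a-\varepsilon,a)\subseteq I_\alpha$'' becomes easier to satisfy as $\varepsilon\to 0$ yet is always controlled by the pointwise-dimension bound just below the half-open endpoint.
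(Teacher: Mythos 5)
Your proof is correct, and it uses the same two pillars as the paper (Theorem \ref{thm:indecomposables} plus the bound $\dim V(x)\leq n$ from the surjection), but it organizes the finiteness argument differently and more directly. The paper proceeds by contradiction: it first shows that $\supp V$ contains only finitely many sinks and sources (each $Q_i$ touches at most three, so $\supp Q$, and hence $\supp V$, has finitely many), then argues that if there were infinitely many interval summands, infinitely many would be confined between a fixed adjacent sink and source, and after discarding finitely many it isolates a $Q_i=P_{(a}$ or $P_{a)}$ whose right (resp.\ left) endpoints $b_\alpha$ accumulate at $a$, contradicting the bound $\dim V(x)\le n$ near $a$. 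You instead partition the index set $A$ by which $P_i$ admits a nonzero map to $M_{I_\alpha}$ and bound each piece $A_i$ outright: for $P_c$ via the adjunction $\Hom(P_c,M_J)\cong M_J(c)$, and for $P_{a)}$, $P_{(a}$ via the increasing family $T_\varepsilon$ of indices whose interval contains a one-sided neighborhood of $a$, each $T_\varepsilon$ having at most $n$ elements, so the nested union does too. This yields the explicit bound $|A|\le n^2$, which the paper does not state, and avoids the detour through the sinks-and-sources count in $\supp V$. The only thing worth flagging is that your intermediate inequality $|T_\varepsilon|\le\dim V(a-\varepsilon)$ should really be $|T_\varepsilon|\le\dim V(x)$ for any $x\in(a-\varepsilon,a)$, since $a-\varepsilon$ itself need not lie in $I_\alpha$; evaluating at, say, $a-\varepsilon/2$ gives the same bound $\le n$. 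You also supply, and rely on, the explicit characterization that $\Hom(P_{a)},M_J)\ne 0$ if and only if $J$ contains some left neighborhood $(b,a)$ of $a$; this is correct and is used only implicitly in the paper's contradiction argument. Your treatment of the Krull--Schmidt conclusion (local endomorphism rings from Lemma \ref{lem:V is schurian}, closure under summands via composing the surjection with the projection) is the standard argument the paper gestures at but does not spell out.
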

\begin{proof}
Suppose $V$ is in $\repAR$ and $\bigoplus_{i=1}^n Q_i\to V$ be a surjective morphism required by Definition \ref{def:finitelygeneratedreps}.

Since $\dim Q_i(x) \leq 1$ for all $x\in \R$, $\dim V(x) \leq n$ for all $x\in \R$.
That is, both $Q$ and $V$ are in $\RepbAR$.
By Theorem \ref{thm:indecomposables}, $V$ is a direct sum of (\emph{a priori} possibly infinitely many) interval indecomposables.

Since each $Q_i$ is projective, the support of each $Q_i$ contains at most 3 sinks and sources (1 source and 2 sinks).
Then, since $Q$ is a finite direct sum, the support of $Q$ itself contains finitely many sinks and sources.
Since $Q$ surjects onto $V$, the support of $V$ must also contain only finitely many sinks and sources.

For contradiction, suppose $V$ is an infinite direct sum of indecomposables.
Since $V$ is pointwise finite-dimensional and its support contains finitely many sinks and sources, infinitely many summands must have support that does not contain a sink or a source; i.e.~each of these indecomposable's support is bounded by an adjacent sink and source.
Since there are only finitely many sinks and sources in the support of $V$, infinitely many must have support between the same adjacent sink and source.

For each $Q_i = P_a$ for some $a$ (classification in Remark \ref{rem:indecomposableprojectives}), any indecomposable hit by $Q_i$ must contain $a$ in its support.
Since $V$ is pointwise finite dimensional there can only be finitely many such indecomposables.
Thus there must be some $Q_i = P_{(a}$ or $P_{a)}$.

If $Q_i=P_{(a}$ then any indecomposable $V_\alpha$ hit by $Q_i$ has the property that $glb\supp V_\alpha \leq a$.
If $Q_i$ hit infinitely many indecomposables there must be infinitely many with support of the form $(a,b_\alpha)$ and the $b_\alpha$ must converge on $a$.
However, $V$ is also in $\RepbAR$ and so this is a contradiction as $\lim \dim V(x)$ as $x\to a$ from above would $\infty$.
The same argument holds if $Q_i=P_{a)}$.~
Therefore, $V$ is the direct sum of finitely many indecomposables.
Combined with Theorem \ref{thm:iso-indecomps} and Lemma \ref{lem:V is schurian} this shows $\repAR$ is Krull-Schmidt.
\end{proof}

\begin{remark}
In \cite{SalaSchiffmann2019}, Sala and Schiffmann prove their category of coherent representations (which they call coherent persistence modules) has similar properties to Theorem \ref{thm:little rep}.
In their paper, \emph{tame} representations have finitely-many places where non-isomorphisms occur in the representation. \emph{Coherent} representations are tame with bounded support and with right continuous dimension functions. Our category $\repAR$ is the category of tame representations of $A_\R$.
%These categories are similar to $\repAR$ but not the same. In $\repAR$ all representations are tame and some are coherent.
%In the case that $A_\R$ has unbounded sinks and sources on both sides, $\repAR$ coincides with coherent representations.
\end{remark}

\begin{proposition}\label{prop:not artinian}
The category $\text{rep}_k(A_{\R})$ is not Artinian.
\end{proposition}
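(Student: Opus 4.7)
My plan is to exhibit one object in $\repAR$ admitting an infinite strictly descending chain of subrepresentations, which contradicts the Artinian condition.

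Since $S$ is discrete (Definition \ref{def:AR}), the complement $\R \setminus S$ contains some nonempty open interval $J$ on which $\preceq$ restricts to a total order (no sinks or sources lie inside $J$). I would pick any $b \in J$ and use the indecomposable projective $P_b$ from Definition \ref{def:projective generated} as the ambient object: it is cyclic and projective, hence finitely generated, and so lies in $\repAR$ by Remark \ref{rem:indecomposableprojectives}. To build the chain, I would choose any strictly $\preceq$-decreasing sequence $\{a_n\}_{n\geq 1} \subset J$ with $a_1 = b$; such a sequence exists by taking a monotone sequence in the standard order moving from $b$ toward whichever endpoint of $J$ is $\preceq$-below $b$.

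Each $P_{a_n}$ lies in $\repAR$ for the same reason as $P_b$, and the pointwise identity defines a morphism of representations $P_{a_{n+1}} \hookrightarrow P_{a_n}$: transitivity of $\preceq$ together with $a_{n+1} \prec a_n$ gives $\supp P_{a_{n+1}} \subseteq \supp P_{a_n}$, and every nontrivial structure map of a $P_c$ is the identity on $k$, so the naturality squares commute trivially (the only interesting case $y \preceq x$ with $x \not\preceq a_{n+1}$ and $y \preceq a_{n+1}$ reduces to $0 \to k$ on both sides). Each inclusion is strict because $a_n \preceq a_n$ but $a_n \not\preceq a_{n+1}$, witnessing $a_n \in \supp P_{a_n} \setminus \supp P_{a_{n+1}}$.

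Putting these together yields an infinite strictly descending chain
\[
P_{a_1} \supsetneq P_{a_2} \supsetneq P_{a_3} \supsetneq \cdots
\]
of subobjects of $P_b \in \repAR$, establishing that DCC on subrepresentations fails. There is essentially no obstacle; the one step requiring care is selecting the $a_n$ on the side of $b$ where $\preceq$ decreases, which is immediate from the local monotonicity of $\preceq$ on $J$ guaranteed by the absence of sinks and sources there.
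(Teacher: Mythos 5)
Your proposal is correct and follows essentially the same approach as the paper: both exhibit a strictly descending chain of projective indecomposables $P_z$ as $z$ moves $\preceq$-downward through an interval containing no sinks or sources. The only cosmetic difference is that you extract a countable sequence $a_1 \succ a_2 \succ \cdots$, whereas the paper observes that the full (uncountable) family $\{P_z : b \prec z \prec a\}$ is already totally ordered by inclusion; either witnesses the failure of the descending chain condition.
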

\begin{proof}
Let $P_a$ be a projective indecomposable (Remark \ref{rem:indecomposableprojectives}) such that $a$ is not in $S$.
Let $b\in\bar{S}$ such that $b\preceq a$; note $b\neq a$.
Then, for every $b \preceq z \preceq a$ such that $b\neq z \neq a$, $P_z\subsetneq P_a$.
Furthermore, for any two such $z,z'$ such that $z\preceq z'$, we have  $P_z\subsetneq P_{z'}\subsetneq P_a$.
Thus, we have an infinite (uncountable!) descending chain and so $\repAR$ is not Artinian.
\end{proof}

\begin{example}\label{xmp:notfinitelygeneratedrep}
Let us return to the representation $M$ in Example \ref{xmp:boundedreps}.
It is an uncountable sum and so not in the category $\repAR$.
In particular, any surjection onto $M$ by a sum of interval indecomposables would require the source representation to be an uncountable sum as well.
\end{example}

\subsection{Properites of $\repAR$}\label{sec:properties of rep}
We now prove Theorem \ref{thm:little rep} (4) and (5).

\begin{proposition}\label{prop:doubleinfiniteisthesame}
Let $A_\R$ and $A'_\R$ be different orientations such that the sinks and sources are unbounded above and below in both $A_\R$ and $A'_\R$.
Then $\repAR\cong \text{rep}_k(A'_\R)$.
\end{proposition}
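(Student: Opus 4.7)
The strategy is to construct an orientation-preserving homeomorphism $\phi\colon \R \to \R$ carrying $S$ bijectively to $S'$ and matching sinks to sinks and sources to sources. Such a $\phi$ induces an isomorphism of the continuous path algebras $kA_\R \cong kA'_\R$, and the pullback equivalence of module categories restricts to an equivalence $\repAR \simeq \rep_k(A'_\R)$.

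\textbf{Matching the parities.} Write $S = \{s_n\}_{n\in \Z}$ and $S' = \{s'_n\}_{n\in\Z}$; both indexings run over all of $\Z$ by the double-infinite hypothesis. If $s_0$ is a sink in $A_\R$ but $s'_0$ is a source in $A'_\R$ (or vice versa), re-index $S'$ by a shift of $1$ so that $s_0$ and $s'_0$ have the same parity. This relabeling does not change the underlying quiver $A'_\R$ or its category of representations; afterwards $s_n$ is a sink (resp.\ source) exactly when $s'_n$ is.

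\textbf{Constructing $\phi$.} For each $n \in \Z$, choose the unique affine increasing homeomorphism $[s_n, s_{n+1}] \to [s'_n, s'_{n+1}]$ sending $s_n \mapsto s'_n$ and $s_{n+1} \mapsto s'_{n+1}$. Glue these across endpoints to obtain a strictly increasing homeomorphism $\phi \colon \R \to \R$ with $\phi(s_n) = s'_n$ for all $n$. Since each fundamental segment is carried to one of the same parity, $\phi$ preserves the partial orders: $y \preceq x$ in $A_\R$ if and only if $\phi(y) \preceq' \phi(x)$ in $A'_\R$.

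\textbf{Induced equivalence.} On basis pairs, $(x,y) \mapsto (\phi(x), \phi(y))$ extends to a $k$-algebra isomorphism $kA_\R \to kA'_\R$, giving a functor $\Phi_* \colon \RepAR \to \Rep_k(A'_\R)$ by $(\Phi_* V)(x) = V(\phi^{-1}(x))$ and $(\Phi_* V)(x,y) = V(\phi^{-1}(x), \phi^{-1}(y))$. The analogous construction from $\phi^{-1}$ provides a strict inverse, so $\Phi_*$ is in fact an isomorphism of categories.

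\textbf{Restriction to $\repAR$.} Because $\Phi_*$ only relabels vertices, it preserves pointwise dimension and hence restricts to $\ReppwfAR \simeq \Rep_k^{\mathrm{pwf}}(A'_\R)$. It sends each interval indecomposable $M_I$ to $M_{\phi(I)}$ and each of the indecomposable projectives $P_a, P_{a)}, P_{(a}$ of Remark \ref{rem:indecomposableprojectives} to the corresponding projective at $\phi(a)$. In particular, $\Phi_*$ carries finite direct sums of indecomposable projectives to finite direct sums of indecomposable projectives, so it sends a finitely generated representation to a finitely generated representation, and similarly for $\Phi_*^{-1}$. This restricts to the claimed equivalence $\repAR \simeq \rep_k(A'_\R)$.

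\textbf{Main obstacle.} The only point requiring care is ensuring that sinks map to sinks; this is exactly what the parity-matching in the first step accomplishes, and depends crucially on both $S$ and $S'$ being doubly infinite so that the indexings can be aligned on all of $\Z$. Once $\phi$ is in place, the equivalence is essentially a formal relabeling.
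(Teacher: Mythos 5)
Your proposal is correct and follows essentially the same route as the paper: build a piecewise-affine increasing bijection $\phi\colon\R\to\R$ with $\phi(s_n)=s'_n$, check it preserves the partial order, and transport representations. The paper's $F$ in the proof is defined by exactly your formula on each segment $[s_n,s_{n+1}]$.

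Two small remarks. First, your parity-matching step is superfluous under the paper's conventions: Definition~\ref{def:AR} (p3, p3') fixes $s_n$ to be a sink whenever $n$ is even for \emph{every} continuous quiver of type $A$, so $s_0$ and $s'_0$ are both sinks automatically and the indexing $s_n\mapsto s'_n$ already respects parity --- the mismatch you guard against cannot occur. Second, your passage through the path-algebra isomorphism $kA_\R\cong kA'_\R$ with its explicit strict inverse is actually tighter than the paper's argument, which concludes from ``a bijection on indecomposables preserving $\Hom$'' plus Krull--Schmidt; your version makes the functor and its inverse explicit on all objects, which is cleaner and avoids any worry about choices of isomorphisms when reassembling direct sums. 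Both proofs are correct, and the core idea is identical.
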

\begin{proof}
We'll define a bijection $F:\R\to\R$ that induces a bijection on (isomorphism classes of) indecomposables and thus an equivalence of categories.
Recall $S$ is the set of sinks and sources of $A_\R$ and $S'$ is the set of sinks and sources of $A'_\R$.
First define the bijection on $S\to S'$ to be $s_n\mapsto s'_n$.
Let $x\in\R$ and $n\in\Z$ such that $s_n<x<s_{n+1}$.
Then $x=t\cdot s_n + (1-t) s_{n+1}$ for some $t\in(0,1)$.
Let $F(x)= t\cdot F(s_n)+(1-t) F(s_{n+1})$.

This induces a bijection on indecomposables as it is a bijection on $\R$.
In particular, if $x\preceq y$ then $F(x)\preceq F(y)$.
If $\Hom(M_{|a,b|},M_{|c,d|})\cong k$ in $\repAR$ then $a\preceq c$ and $b\preceq d$.
Since $F(a)\preceq F(c)$ and $F(b)\preceq F(d)$, the $\Hom$-set from $M_{|F(a), F(b)|}$ to  $M_{|F(c), F(d)|}$ is also isomorphic to $k$.
Thus we have an equivalence on the indecomposables.
Since both categories are Krull-Schmidt we have an equivalence of categories.
\end{proof}

\begin{proposition}\label{prop:projectivesmaps}\label{prop:injectivesmaps}
Let $P$ and $Q$ be projective indecomposables in $\repAR$ and $I$ and $J$ be injective indecomposables in $\repAR$.
\begin{itemize}
\item Any morphism $f:P\to Q$ is either 0 or mono.
\item Any morphism $g:I\to J$ is either 0 or epi.
\end{itemize}
\end{proposition}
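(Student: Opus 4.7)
The plan is to prove the statement for projective indecomposables first and then deduce the injective case by dualization. For $g:I\to J$, Definition \ref{def:dualdef} produces $Dg:DJ\to DI$, a morphism between indecomposable projectives of $\rep_k(A_\R^{\text{op}})$; since $D(g(x))$ is a monomorphism iff $g(x)$ is an epimorphism pointwise, the projective result applied to $Dg$ forces $g$ to be $0$ or epi. So only the projective claim requires a direct argument.

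For $f:P\to Q$ between projective indecomposables I will use that $P$ is pointwise one-dimensional, so $f$ is mono iff $f(x)\neq 0$ for every $x\in\supp P$. Before the main argument I would record two facts that follow by inspection of Remark \ref{rem:indecomposableprojectives}: (i) the support of every projective indecomposable is $\preceq$-downward closed in $\R$, and (ii) for a projective indecomposable $R$ and any $y\preceq x$ with $x,y\in\supp R$, the map $R(x,y)$ equals $1_k$. Together these collapse each compatibility square $Q(x,y)\circ f(x)=f(y)\circ P(x,y)$ to a plain equation of scalars whenever both $x$ and $y$ lie in $\supp P\cap\supp Q$.

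Assuming $f\neq 0$, I will choose $x_0\in\supp P$ with $f(x_0)\neq 0$ and split on the shape of $\supp P$. First, if $P=P_a$ then $a\in\supp P$ and $x_0\preceq a$, so the square at $(a,x_0)$ gives $Q(a,x_0)\circ f(a)=f(x_0)\neq 0$, forcing $f(a)\neq 0$ and $a\in\supp Q$. Fact (i) then places the full $\preceq$-downset of $a$, which equals $\supp P_a$, inside $\supp Q$, and fact (ii) gives $f(y)=Q(a,y)\circ f(a)=f(a)\neq 0$ for every $y\in\supp P$. Second, if $P=P_{a)}$ or $P=P_{(a}$, then $\supp P$ lies in a single segment between consecutive sink and source, so it is totally ordered in $\preceq$; for any $y\in\supp P$, either $x_0\preceq y$ (and upward propagation $Q(y,x_0)\circ f(y)=f(x_0)\neq 0$ gives $f(y)\neq 0$), or $y\preceq x_0$ (and fact (i) puts $y\in\supp Q$, then fact (ii) gives $f(y)=f(x_0)\neq 0$).

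The hard part will be the case $P=P_a$ with $a$ a source, where $\supp P_a=[s_{2k},s_{2k+2}]$ straddles the source and is not totally ordered in $\preceq$: the two wings on either side of $a$ are incomparable. It is precisely to handle this case that the argument must route through the unique $\preceq$-maximum $a$ and then radiate in all $\preceq$-downward directions at once, rather than starting at an arbitrary $x_0$; the two one-sided shapes $P_{a)}$ and $P_{(a}$ avoid this subtlety because their supports stay on one side of every sink and source, which is what makes the second case immediate.
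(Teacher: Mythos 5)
Your proposal is correct, but it takes a genuinely different route from the paper's. The paper's proof is a three-line categorical argument: $\im f\subset Q$ is a subrepresentation of a projective, hence projective by the hereditary property established in Theorem \ref{thm:characterization of one sided projectives}(1) and Remark \ref{rem:indecomposableprojectives}; the surjection $P\twoheadrightarrow\im f$ therefore splits, so $\im f$ is a direct summand of the indecomposable $P$, forcing $\im f=0$ or $\im f\cong P$. Your proof instead unwinds the compatibility squares directly against the explicit description of $P_a$, $P_{a)}$, $P_{(a}$, propagating the nonvanishing of $f$ from a witness point $x_0$ to the unique $\preceq$-maximum $a$ (when it is in the support) and then back down the downset, and handling the one-sided shapes by total-orderedness of a single segment. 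Both arguments are sound. What the paper's version buys is brevity and reuse of already-proved structural facts (it would survive unchanged if the explicit list of projectives changed, so long as heredity persisted); what yours buys is transparency at the level of scalars — it makes visible exactly why a nonzero component at one point forces nonzero components everywhere on $\supp P$, and it does not depend on the heredity lemma. Your dualization of the injective half matches what the paper sketches with ``the second is dual,'' and your facts (i) and (ii) do hold for all three projective shapes, as a check against Remark \ref{rem:indecomposableinjectives} confirms. You correctly flag that the subtle case is $P=P_a$ with $a$ a source, where the support straddles two incomparable wings and an arbitrary $x_0$ cannot reach the other wing without first climbing to $a$; that is the right diagnosis of where a naive pointwise argument would otherwise stall.
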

\begin{proof}
We will prove the first statement; the second is dual.
Let $f:P\to Q$ be a map of indecomposable projectives.
By Theorem \ref{thm:characterization of one sided projectives} and Remark \ref{rem:indecomposableprojectives} the image $\im f$ in $Q$ is a subrepresentation and so projective.
Since $P$ surjects on to $\im f$ it is a split subrpresentation of $P$.
However, $P$ is indecomposable so $\im f = 0$ or $\im f \cong P$.
\end{proof}

Below, for each indecomposable representation $V$ in $\repAR$ we create two projective representations $P_0(V)$ and $P_1(V)$.
In Proposition \ref{prop:projectiveresolution} we prove that $P_1(V)\to P_0(V)\to V$ is the minimal projective presentation of $V$.
\begin{construction}\label{con:soVsiV}
Let $V$ be an indecomposable in $\repAR$ with support $|a,b|$.
If $V$ is projective let $P_0(V)=V$ and $P_1(V)=0$.

Now suppose $V$ is not projective.
Recall $S$ is the set of sinks and sources of $A_\R$ in $\R$.
Since $V$ is finitely generated $|a,b|\cap S$ is finite. %otherwise you'd need infinitely many projectives to generate it, which is not the definition of repAR
We let $P_0(V)$ be the direct sum of the following indecomposable projectives.
\begin{itemize}
\item $P_s$ for all sources $s$ in $(a,b)$.
\item $P_{(a}$ if $a\notin |a,b|$ and there exists $x\preceq a$ in $|a,b|$.
\item $P_a$ if $a\in |a,b|$ and there exists $x\preceq a$, $x\neq a$ in $|a,b|$.
\item $P_{b)}$ if $b\notin |a,b|$ and there exists $x\preceq b$ in $|a,b|$.
\item $P_b$ if $b\in |a,b|$ and there exists $x\preceq b$, $x\neq b$ in $|a,b|$.
\end{itemize}
We let $P_1(V)$ be the direct sum of the following indecomposable projectives.
\begin{itemize}
\item $P_s$ for all sources $s$ in $(a,b)$.
\item $P_a$ if $a\notin |a,b|$ and there exists $a\preceq x$ in $|a,b|$. %(
\item $P_{a)}$ if $a\in |a,b|$.
\item $P_b$ if $b\notin |a,b|$ and there exists $b\preceq x$ in $|a,b|$.
\item $P_{(b}$ if $b\in |a,b|$. %)(
\end{itemize}
If $a$ or $b$ is a sink and in $|a,b|$ then the summand $P_{a)}$ or $P_{(b}$ is 0, respectively. %)
We see that both $P_0(V)$ and $P_1(V)$ are nontrivial and finitely generated, so in $\repAR$.
\hfill $\diamond$
\end{construction}

\begin{proposition}\label{prop:injectivemorphismonprojectives}
Let $V$, $P_1(V)$, and $P_0(V)$ be as in Construction \ref{con:soVsiV}.
Then there is an injective morphism $P_1(V)\hookrightarrow P_0(V)$ whose cokernel is $V$.
\end{proposition}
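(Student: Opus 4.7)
The plan is to exhibit the required morphism as the inclusion of the kernel of a surjection $\pi: P_0(V) \twoheadrightarrow V$. If $V$ is itself projective the claim is immediate, so I will assume throughout that $V$ is non-projective and work with the summand decompositions in Construction \ref{con:soVsiV}.

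First I will construct $\pi$. Each summand $Q$ of $P_0(V)$ is an indecomposable projective generated at a point $c \in |a,b|$, namely either a source in $(a,b)$ or one of the boundary points handled in the construction; since $V$ is pointwise one-dimensional and $c \in \mathrm{supp}(V)$, Proposition \ref{prop:homiskor0} and Theorem \ref{thm:iso-indecomps} provide a unique-up-to-scalar nonzero morphism $Q \to V$. I will pick the scalar to be $1$ on each summand and let $\pi$ be the sum. A case analysis on the position of each of $a$ and $b$ (a sink, a source, or an interior point of a sink-source or source-sink subinterval, together with whether $a,b \in |a,b|$) will show that the supports of the summands of $P_0(V)$ cover $|a,b|$, making $\pi$ surjective.

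Next I will define $\phi: P_1(V) \to P_0(V)$ summand by summand. For each interior sink-type summand of $P_1(V)$ (simple at a sink $s$ lying strictly between two adjacent contributing summands of $P_0(V)$), I will embed it as the antidiagonal $1 \mapsto (1,-1)$ into the two overlapping summands of $P_0(V)$, which are the only ones nonzero at $s$. For each endpoint summand of $P_1(V)$ of the form $P_{a)}$, $P_{(a}$, $P_{b)}$, or $P_{(b}$ (or the possible $P_a$ or $P_b$ appearing at an open end), I will embed it into the boundary summand of $P_0(V)$ whose support strictly extends beyond $|a,b|$, identifying the target summand's ``excess'' subrepresentation outside $|a,b|$ with a copy of the source.

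Finally I will verify the following: (i) pointwise injectivity of $\phi$, which is immediate from the explicit formulas; (ii) $\pi \circ \phi = 0$, which holds because $(1,-1)$ is annihilated by the sum map at interior sinks and because the image of endpoint summands lies where $V = 0$; and (iii) the pointwise dimension identity $\dim \ker(\pi_x) = \dim P_1(V)(x)$ at every $x \in \mathbb{R}$, which combined with (i) and (ii) forces $\ker(\pi) = \mathrm{im}(\phi)$ and hence gives the desired short exact sequence. The main obstacle is the endpoint bookkeeping: depending on whether $a$ or $b$ is a sink, a source, or an interior point of a sink-source or source-sink subinterval, the contributing boundary summand of $P_0(V)$ and its excess past $|a,b|$ take different forms, and each configuration requires verifying that the corresponding endpoint summand of $P_1(V)$ has precisely the matching support and structure maps.
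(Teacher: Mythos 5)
Your proposal is correct and follows essentially the same strategy as the paper: you build the canonical surjection $\pi: P_0(V)\twoheadrightarrow V$, define $\phi: P_1(V)\to P_0(V)$ summand by summand (antidiagonal $(1,-1)$ into the two adjacent source projectives at each interior sink, and inclusion of the ``excess'' at the endpoints), and then identify $\im\phi$ with $\ker\pi$. The paper states this last identification with essentially no justification (``note that the kernel of the surjection is $P_1(V)$; this is equivalent''), whereas you make it concrete via the pointwise dimension count combined with $\pi\circ\phi=0$ and injectivity of $\phi$; the one place where your phrasing could mislead is that when the orientation of $A_\R$ places $a$ in a subinterval where $|a,b|$ has no endpoint-type summand in $P_0(V)$, the endpoint summand of $P_1(V)$ at $a$ must map into the first interior source projective $P_{s_{2n+1}}$ rather than into a ``boundary summand of $P_0(V)$'' --- but you already flag the endpoint bookkeeping as the case analysis to be done, and the paper treats it the same way.
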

\begin{proof}
If $V$ is projective the statement is trivially true.
Now suppose $V$ is not projective.
There are finitely many sinks and sources, totally ordered.
So on those summands we let the maps be defined in the following way where $\pm$ means scalar multiplication by $\pm 1$:\begin{displaymath}\xymatrix@C=1ex{
& \cdots \ar[dr]^-{-} & & P_{s_{2n}} \ar[dl]_-{+} \ar[dr]^-{-} & & P_{s_{2n+2}} \ar[dl]_-{+} \ar[dr]^-{-}& & {\cdots} \ar[dl]_-{+} \ar[dr]^-{-}& & P_{s_{2n+2m}} \ar[dl]_-{+} \ar[dr]^-{-}& & \cdots  \ar[dl]_-{+} \\
\cdots & & P_{a*}\text{ or }P_{s_{2n-1}} & & P_{s_{2n+1}} & & P_{s_{2n+3}} & & \cdots & & P_{s_{2n+2m+1}} & & \cdots 
}\end{displaymath}

Since there is no accumulation of elements of $S$ in $\R$, a projective indecomposable at $a$ can only appear as a summand of $P_0(V)$ or $P_1(V)$, but not both.
The similar statement is true for $b$.
Thus, only one type of projective summand of each $a$ or $b$ may appear in $P_0(V)$ and $P_1(V)$.
Denote whichever summands appear, if any, by $P_{a*}$ and $P_{b*}$.

If $P_{a*}$ appears in $P_1(V)$ then there is a nontrivial map from $P_{a*}$ to $P_{s_{2n+1}}$ or $P_{b*}$, depending on whether or not $(a,b)$ contains any sources.
If this is the case, use scalar multiplication by $-1$.
In the similar case for $b$, use scalar multiplication by $+1$.

If $P_{a*}$ appears in $P_0(V)$ then there is a nontrivial map from $P_{s_{2n}}$ or $P_{b*}$ to $P_{a*}$, depending on whether or not $(a,b)$ contains any sinks.
If this is the case, use scalar multiplication by $+1$.
In the similar case for $b$, use scalar multiplication by $-1$.

Instead of proving that this map is injective with cokernel $V$, we instead note that the kernel of the surjection $P_0(V)\twoheadrightarrow V$ is $P_1(V)$.
This is equivalent.
\end{proof}

\begin{proposition}\label{prop:projectiveresolution}
The following hold:
\begin{itemize}
\item For any indecomposable $V$ in $\repAR$, $P_1(V)\hookrightarrow P_0(V)\twoheadrightarrow V$ is the minimal projective resolution and presentation of $V$.
\item All representations in $\repAR$ are finitely presented.
\item The global dimension of $\repAR$ is 1.
\end{itemize}
\end{proposition}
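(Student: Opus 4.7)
The plan is to verify the three assertions in order, leveraging the short exact sequence $0\to P_1(V)\to P_0(V)\to V\to 0$ that Proposition~\ref{prop:injectivemorphismonprojectives} already supplies for every indecomposable $V\in\repAR$.

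For the first bullet, I would first note that both $P_0(V)$ and $P_1(V)$ are finite direct sums of indecomposable projectives in $\ReppwfAR$ (by Theorem~\ref{thm:point projective} and Proposition~\ref{prop:one sided at a}), so the sequence is indeed a projective resolution of length at most one. For minimality, one must show that the epimorphism $P_0(V)\twoheadrightarrow V$ is a projective cover and likewise that $P_1(V)\twoheadrightarrow \ker(P_0(V)\to V)$ is a projective cover. Since $\repAR$ is Krull--Schmidt (Lemma~\ref{lem:krull-schmidt}) and $\Hom$ between interval indecomposables is either zero or one-dimensional (Proposition~\ref{prop:homiskor0}), I would check minimality summand by summand: each indecomposable summand listed in Construction~\ref{con:soVsiV} has been chosen so that it contributes the unique line at a distinguished point of $V$ (a source in the interior of the support, or an appropriate endpoint), and removing any such summand drops the dimension of the image at that point to zero. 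The analogous observation for $P_1(V)\to\ker(P_0(V)\to V)$ then gives minimality of the first syzygy as well.

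For the second bullet, I would invoke Lemma~\ref{lem:krull-schmidt} again: every $V\in\repAR$ decomposes as a finite direct sum $V=\bigoplus_{i=1}^n V_i$ of interval indecomposables. Taking the direct sum of the finite presentations of each $V_i$ constructed above yields a finite projective presentation $\bigoplus_{i=1}^n P_1(V_i)\to \bigoplus_{i=1}^n P_0(V_i)\to V\to 0$ of $V$.

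For the third bullet, the resolution produced above has length at most one, so the global dimension of $\repAR$ is at most $1$. To witness that the bound is attained, I would exhibit a non-projective indecomposable: the simple $M_{\{x\}}$ at any point $x\in\R$ that is not a sink is such an example, since by Remark~\ref{rem:indecomposableprojectives} the only indecomposable projective with support $\{x\}$ occurs when $x$ is a sink; consequently $M_{\{x\}}$ is not projective and $P_1(M_{\{x\}})\neq 0$ in Construction~\ref{con:soVsiV}. The main obstacle is the minimality verification in the first bullet, which requires a short case analysis on whether each endpoint of the support of $V$ is included in that support and whether it is a sink or a source; however, Construction~\ref{con:soVsiV} has been arranged so that the bookkeeping is essentially automatic once one observes the one-to-one correspondence between indecomposable summands of $P_0(V)$ and the distinguished stalks of $V$ at which they are the unique contributors.
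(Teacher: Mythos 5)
Your proof is correct and follows the same strategy as the paper: both rest on the short exact sequence furnished by Proposition~\ref{prop:injectivemorphismonprojectives} and on checking that the projective cover $P_0(V)\twoheadrightarrow V$ is minimal (the paper compresses this into the assertion that $P_1(V)$ is superfluous in $P_0(V)$, which your summand-by-summand analysis of the distinguished stalks makes explicit). The one genuine divergence is the lower bound on global dimension: the paper appeals to the observation that reversing $\preceq$ yields the opposite category, whereas you exhibit a concrete non-projective indecomposable $M_{\{x\}}$ for $x$ not a sink; your version is more elementary and self-contained. You also make explicit the passage from indecomposables to arbitrary objects of $\repAR$ via Krull--Schmidt for the finitely-presented bullet, which the paper leaves tacit.
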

\begin{proof}
We see $P_1(V)$ is superflous in $P_0(V)$ and $P_1(V)\hookrightarrow P_0(V)\twoheadrightarrow V$ is exact by Proposition \ref{prop:injectivemorphismonprojectives}.
Thus the sequence is the minimal projective resolution and presentation of $V$.
Furthermore, noting that the reversal of orientation $\preceq$ on $\R$ gives the opposite category, we see the global dimension of $\repAR$ is 1.
\end{proof}

\begin{proposition}\label{prop:smallext}
Let $V$ and $W$ be indecomposables in $\repAR$.
If $\Ext^1(W,V)\neq 0$ then $\Ext^1(W,V)\cong k$.
\end{proposition}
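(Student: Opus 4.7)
The plan is to compute $\Ext^1(W,V)$ via the minimal projective resolution of $W$ provided by Construction \ref{con:soVsiV} and Proposition \ref{prop:projectiveresolution}. That proposition gives a short exact sequence $0 \to P_1(W) \to P_0(W) \to W \to 0$ in which $P_0(W)$ and $P_1(W)$ are finite direct sums of indecomposable projectives. Since $\repAR$ has global dimension one, applying $\Hom(-,V)$ yields the four-term exact sequence
\begin{equation*}
0 \to \Hom(W, V) \to \Hom(P_0(W), V) \to \Hom(P_1(W), V) \to \Ext^1(W, V) \to 0,
\end{equation*}
so $\Ext^1(W, V)$ is identified with the cokernel of the connecting map, and it suffices to bound this cokernel above by one in dimension.

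First, I would establish that for any indecomposable projective summand $P$ of $P_0(W)$ or $P_1(W)$, the space $\Hom(P, V)$ is either $0$ or isomorphic to $k$. For $P = P_c$ this is immediate from Lemma \ref{lem: PXc is adjoint to restriction to c}, which gives $\Hom(P_c, V) \cong V(c)$, and $V$ is pointwise at most one-dimensional by Theorem \ref{thm:iso-indecomps}. For the one-sided projectives $P_{(c}$ and $P_{c)}$, the same conclusion follows from Proposition \ref{prop:homiskor0} applied to $P$ and $V$, both of which are indecomposable interval representations. Hence both $\Hom(P_0(W), V)$ and $\Hom(P_1(W), V)$ are finite-dimensional, each being a direct sum of copies of $k$ indexed precisely by those projective summands whose supports are contained in $\supp V$.

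Next I would analyse the connecting map explicitly using Proposition \ref{prop:injectivemorphismonprojectives}: the differential $P_1(W) \to P_0(W)$ is a ``ladder'' of $\pm 1$ scalars connecting each summand of $P_1(W)$ (a projective at an interior sink or sink-type endpoint) to its at most two neighbouring $P_0(W)$ summands along the totally ordered finite set of sinks and sources inside $\supp W$. Under $\Hom(-, V)$ this dualises to an incidence-type linear map between the finite-dimensional target spaces, with matrix entries in $\{0, \pm 1\}$ arranged in the same alternating pattern. The alternating signs ensure that the image is a hyperplane, i.e.\ has codimension at most one in the subspace of $\Hom(P_1(W), V)$ coming from summands whose supports do lie in $\supp V$; this is a short linear-algebra argument in the spirit of the classical $\Ext$ computation for hereditary type $A$ quivers.

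The main obstacle will be the endpoint bookkeeping. The shapes of $P_0(W)$ and $P_1(W)$ at the two ends of $\supp W$ depend on whether each endpoint is open or closed and whether it coincides with a sink, a source, an interior point, or $\pm\infty$, and this in turn determines which one-sided projectives $P_{(c}$ or $P_{c)}$ appear; symmetrically, the shape of $\supp V$ near these endpoints decides which summands of $\Hom(P_i(W), V)$ are actually nonzero. I expect that a direct case analysis over the finitely many endpoint configurations will show that the surviving summands sit together in such a way that the cokernel has dimension at most one in every case, which is all that is needed since $\Ext^1(W,V)$ is assumed to be nonzero.
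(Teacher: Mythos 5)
Your proposal is correct and takes essentially the same route as the paper's proof. Both start from the minimal projective presentation furnished by Construction \ref{con:soVsiV} and Proposition \ref{prop:projectiveresolution}, apply $\Hom(-,\cdot)$ to obtain a two-term complex whose summand Hom-spaces are each $0$ or $k$, and then reduce the claim to a rank computation for the resulting ``ladder'' matrix with $\pm 1$ entries. Two small remarks: the paper's proof, as written, resolves $V$ and applies $\Hom(-,W)$, which formally computes $\Ext^1(V,W)$ rather than $\Ext^1(W,V)$ — you resolve $W$, which is the logically correct direction, though this makes no difference since $V$ and $W$ are arbitrary indecomposables. Also, your final step (``the alternating signs ensure that the image is a hyperplane'' and the deferred ``direct case analysis'') is at about the same level of detail as the paper's, which simply lists the four possible matrix shapes and asserts the codimension bound; neither spells out the linear algebra in full, but the claim is routine once the matrix forms are in hand.
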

\begin{proof}
Let $V$ and $W$ be indecomposables in $\repAR$.
By Proposition \ref{prop:projectiveresolution} the projective resolution of $V$ is $P_1(V)\hookrightarrow P_0(V)\twoheadrightarrow V$.
By definition $\Ext^i(V,W)$ is the $i$th homology group in the chain
\begin{displaymath}\xymatrix{
0 \ar[r] & \Hom(P_0(V),W) \ar[r] & \Hom(P_1(V),W) \ar[r] & 0.
} \end{displaymath}
Suppose $\Ext^1(W,V)\neq 0$.

Index the projectives in $P_0(V)$ that nontrivially map to $W$ from 1 to $m$, denoted $P_1,\ldots,P_m$, such that if $P_a=P_i$ and $P_b=P_{i+1}$ for $a,b\in\R$ then $a<b$.
Then $\Hom(P_0(V),W)\cong k^m$.
Let $f:(x_1,\ldots,x_m)$ be a nontrivial map $P_0(V)\to W$ and $\iota:P_1(V)\to P_0(V)$ the inclusion.
Index the projectives in $P_1(V)$  that nontrivially map to $W$ from 1 to $n$, similarly to the projectives in $P_0(V)$, denoted $Q_1,\ldots, Q_n$.

Then $Q_1$ maps to $P_1$ and $P_2$ or just $P_1$. 
If $Q_1$ only maps to $P_1$ then the projective $Q_2$ maps to both $P_1$ and $P_2$.
If $Q_1$ maps to both $P_1$ and $P_2$ then $Q_2$ maps to  $P_2$ and $P_3$.
Thus, the composition $f\circ\iota$ will be one of four forms:
\begin{itemize} \item $(x_1,x_1\oplus x_2,\ldots,x_{i-1}\oplus x_i)$, \item $(x_1\oplus x_2,\ldots,x_{i-1}\oplus x_i,x_i)$,
\item $(x_1,x_1\oplus x_2,\ldots,x_{i-1}\oplus x_i,x_i)$, or \item $(x_1\oplus x_2,\ldots,x_{i-1}\oplus x_i)$. \end{itemize}
In any case, basic linear algebra shows us that $\Hom(P_0(V),W)\to \Hom(P_1(V),W)$ is surjective or injective and the difference in dimensions is either 0 or 1.
Therefore $\dim \Ext^1(W,V)$ is 0 or 1.
\end{proof}

\subsection{Existence of Some Auslander--Reiten Sequences}\label{sec:ARsequences}
In this subsection we will show that for any orientation of a continuous type $A$ quiver, the category $\repAR$ contains some Auslander--Reiten sequences but not all Auslander--Reiten sequences (Theorem \ref{thm:little rep} (6)).
However, we will not provide a complete classification of Auslander--Reiten sequences in this paper.
Such a classification will be provided in the sequel to this paper.

In \cite{GabrielRoiter}, Gabriel and Ro\u{\i}ter provide a general description of Auslander--Reiten sequences of representations of linear posets. However, a specific description to this context in the contemporary language and notation of representation theory is new.

We recall the definition of an almost-split sequence, commonly called an Auslander--Reiten sequence.
Such short exact sequences were originally defined by Auslander and Reiten in \cite{ARSequences}.
\begin{definition}\label{def:ARsequence} 
Let $\mathcal A$ be an abelian category and $0\to U \stackrel{f}{\to} V \stackrel{g}{\to} W\to 0$ a short exact sequence in $\mathcal A$.
The short exact sequence is an \underline{almost split sequence}, or \underline{Auslander--Reiten sequence} if the following conditions hold:
\begin{itemize}
\item $f$ is not a section and $g$ is not a retraction.
\item $U$ and $W$ are indecomposable.
\item If $h:U\to X$ is a nontrivial morphism of indecomposables and $U\not\cong X$ then $h$ factors through $f$.
\item If $h:X\to W$ is a nontrivial morphism of indecomposables and $X\not\cong W$ then $h$ factors through $g$.
\end{itemize}
\end{definition}

In the following proposition, recall that $S$ is the set of sinks and sources in a continuous quiver of type $A$ and that $\bar{S}$ includes $\pm \infty$.
\begin{proposition}\label{prop:ARexistence}
Let $s_n, s_{n+1}\in \bar{S}$ and $a,b\in\R$ such that $s_n<a<b<s_{n+1}$.
One of the following is a short exact sequence and in particular an Auslander--Reiten sequence.
\begin{itemize}
\item If $s_n$ is a sink then the Auslander--Reiten sequence is % (
\begin{displaymath} \xymatrix{0\ar[r] &  M_{[a,b)} \ar[rr]^-{\left[\begin{array}{c}1\\1\end{array}\right]} && M_{[a,b]}\oplus M_{(a,b)} \ar[rr]^-{\left[\begin{array}{cc} 1 & -1 \end{array}\right]} && M_{(a,b]} \ar[r] & 0 } \end{displaymath}%)
\item If $s_n$ is a source then the Auslander--Reiten sequence is % [
\begin{displaymath} \xymatrix{0\ar[r] &  M_{(a,b]}  \ar[rr]^-{\left[\begin{array}{c}1\\1\end{array}\right]}  && M_{(a,b)}\oplus M_{[a,b]} \ar[rr]^-{\left[\begin{array}{cc} 1 & -1 \end{array}\right]} && M_{[a,b)} \ar[r] & 0 }\end{displaymath} % ]
\end{itemize}
\end{proposition}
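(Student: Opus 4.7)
My plan is to verify the four conditions of Definition \ref{def:ARsequence} in the sink case; the source case should follow either by a symmetric computation or, more economically, by applying the duality $D$ of Definition \ref{def:dualdef} to the sink case (since $A_\R^{\text{op}}$ turns a sink into a source).

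First I would confirm that the displayed sequence is exact, proceeding pointwise. For $x\notin[a,b]$ the sequence is trivially $0\to 0\to 0\to 0$; for $x=a$ and $x=b$ one of the end terms is zero and exactness is immediate; for $x\in(a,b)$ the sequence is the standard $0\to k\xrightarrow{[1,1]^T} k^{2}\xrightarrow{[1,-1]} k\to 0$. Naturality of $f$ and $g$ with the structure maps of the $M_I$ is automatic because every nonzero structure map is an identity and the displayed matrices intertwine identities. Indecomposability of the end terms follows from Proposition \ref{prop:sufficientProp}. The sequence is not split, because splitting would yield $M_{[a,b)}\oplus M_{(a,b]}\cong M_{[a,b]}\oplus M_{(a,b)}$, contradicting Krull--Schmidt (Theorems \ref{thm:iso-indecomps} and \ref{thm:GeneralizedBarCode}) since the two sides have different multisets of supports. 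In particular $f$ is not a section and $g$ is not a retraction.

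Next I would verify the right almost-split condition: every morphism $h:X\to M_{(a,b]}$ from an indecomposable $X$ that is not an isomorphism factors through $g$. By Theorem \ref{thm:GeneralizedBarCode} we may take $X=M_K$. If $h=0$ the factorization is trivial; otherwise Proposition \ref{prop:homiskor0} gives $\Hom(M_K,M_{(a,b]})\cong k$, so it suffices to treat a canonical generator. A direct analysis of the commuting squares in Definition \ref{def:morphism} shows that $h_x$ equals a fixed nonzero scalar $\lambda$ for $x\in K\cap(a,b]$ and $h_x=0$ elsewhere, and that the existence of such an $h\ne 0$ forces $K$ to satisfy $\sup K\le b$ and to avoid $(b,s_{n+1}]$ entirely (a point $z\in K$ with $z>b$ but $z\le s_{n+1}$ would be $\preceq$-above some $x\in K\cap(a,b]$, yet $z\notin(a,b]$, contradicting naturality). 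Given this constraint on $K$, define $h_{1}:M_K\to M_{[a,b]}$ by $(h_{1})_x=\lambda$ for $x\in K\cap[a,b]$ and $0$ otherwise; the same constraint on $K$ is exactly what makes $h_{1}$ a well-defined morphism. Then $\tilde h=(h_{1},0):M_K\to M_{[a,b]}\oplus M_{(a,b)}$ lifts $h$: at $x\in K\cap(a,b]$ one has $(g\circ\tilde h)_x=\lambda-0=\lambda=h_x$, at $x=a$ the target vanishes, and outside $K\cap[a,b]$ both sides are zero. The only case in which no such non-trivial factorization is needed is $K=(a,b]$, which is precisely the isomorphism case excluded by hypothesis.

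Finally I would handle the left almost-split condition dually: any non-isomorphism $h':M_{[a,b)}\to M_L$ with $M_L$ indecomposable factors through $f$, by transporting the argument above through the duality of Definition \ref{def:dualdef} (which sends the sink configuration at $s_n$ to the source configuration in the opposite quiver). The main obstacle will be step two: pinning down the precise combinatorial constraint on admissible $K$ so that $h_{1}$ really is a morphism and not a naturality failure at the boundary $b$. Once that constraint is isolated, the explicit lift $\tilde h=(h_{1},0)$ and its verification are entirely routine pointwise checks.
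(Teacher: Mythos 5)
Your overall skeleton — pointwise exactness, indecomposability via Proposition \ref{prop:sufficientProp}, non-splitness via Krull--Schmidt, then one almost-split side explicitly and the other by duality — matches the paper's strategy, with the cosmetic difference that you verify the \emph{right} almost-split condition directly and the paper verifies the \emph{left} one directly.

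However, there is a genuine gap in your factorization step. You claim that, given any indecomposable $M_K$ and nonzero non-isomorphism $h:M_K\to M_{(a,b]}$, the lift is always $\tilde h=(h_1,0)$ with $h_1:M_K\to M_{[a,b]}$ defined by $(h_1)_x=\lambda$ on $K\cap[a,b]$. This is false when $a\notin K$. Take, for instance, $K=(a,c]$ with $a<c<b$, which does admit a nonzero $h:M_K\to M_{(a,b]}$. Your $h_1$ then fails the naturality square at the pair $(x,a)$ for any $x\in(a,c]$: on the $M_{[a,b]}$ side $M_{[a,b]}(x,a)\circ h_1(x)=1\cdot\lambda=\lambda$, while on the $M_K$ side $h_1(a)\circ M_K(x,a)=0$ because $M_K(a)=0$. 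So $h_1$ is not a morphism, and $(h_1,0)$ does not exist. The constraint you isolated (``$\sup K\le b$ and $K$ avoids $(b,s_{n+1}]$'') is necessary for $h\neq 0$ but is not the condition that makes your $h_1$ well defined; what you actually need is $a\in K$.

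The fix is the case-split that the paper performs: either $a\in K$, in which case your $h_1:M_K\to M_{[a,b]}$ is a morphism and $(h_1,0)$ lifts $h$; or $a\notin K$, in which case (since $K\neq(a,b]$) one has $K\subseteq(a,b)$, and you should instead define $h_2:M_K\to M_{(a,b)}$ by $(h_2)_x=\lambda$ on $K$ and lift $h$ by $\tilde h=(0,-h_2)$ (the sign matching $g=\left[\begin{smallmatrix}1&-1\end{smallmatrix}\right]$). Exactly one of the two factorizations applies for each $K$, and the excluded case $K=(a,b]$ is the isomorphism case. Once you add this dichotomy your argument goes through.
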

\begin{proof}
We note the two cases are symmetric and prove the first. %(
We see the first map is injective, the second is surjective, and that the sequence is exact at $[a,b]\oplus(a,b)$. 
Thus, the sequence is a short exact sequence. %(

Denote the map $M_{[a,b)}\to M_{[a,b]}\oplus M_{(a,b)}$ in the sequence by $h_1\oplus h_2$. %]
By Proposition \ref{prop:sufficientProp} we know both $M_{[a,b)}$ and $M_{(a,b]}$ are indecomposable. %)
Let $V$ be another indecomposable representation in $\repAR$.
By definition the support of $V$ is an interval $|c,d|$. %(
If there exists $x\in |c,d|$ such that $x<a$ then any morphism $f:M_{[a,b)}\to V$ must be 0. %] ((
Additionally, if there exists $x\in [a,b)$ such that $x\geq d$ and $x\notin|c,d|$ then any $f:M_{[a,b)}\to V$ must be 0. %(
Thus, any morphism $M_{[a,b]}\oplus M_{(a,b)}\to M_{[a,b)}$ must $0$ and morphism $M_{(a,b]}\to M_{[a,b]}\oplus M_{(a,b)}$ must be 0. %]] )

Claim: If $V\not\cong M_{[a,b)}$ and $f:M_{[a,b)}\to V$ is a nonzero morphism then there exists either a nonzero morphism $g_1: M_{[a,b]}\to V$ or $g_2:M_{(a,b)}\to V$ such that $g_i\circ h_i  = f$. %(
Proof of claim: If $V\not\cong M_{[a,b)}$ then, by the conditions in the previous paragraph combined with Theorem \ref{thm:iso-indecomps}, either $b\in|c,d|$ or $a\notin |c,d|$. %]
If $b\in|c,d|$ Then $g_1$ is a nonzero morphism and so $g_1\circ h_1=f$.
If $a\notin |c,d|$ then $g_2$ is a nonzero morphism and so $g_2\circ h_2=f$.
In either case, $f$ factors through $M_{[a,b]}\oplus M_{(a,b)}$. %(

Finally, if $|c,d|=[a,b)$ then by Theorem \ref{thm:iso-indecomps} $f$ is an isomorphism. %][
By a dual argument, a morphism from an indecomposable $W$ to $M_{(a,b]}$ that is not an isomorphism factors through $M_{[a,b]}\oplus M_{(a,b)}$. %)
Therefore, the given sequence is an Auslander--Reiten sequence.
\end{proof}

We give an example of a representation with no left or right Auslander--Reiten sequences in the form of a proposition.
\begin{proposition}\label{prop:no AR sequences}
Let $M_{\{a\}}$ be the indecomposable representation with support $\{a\}$ where $a$ is neither a sink nor a source.
Then there is are no Auslander--Reiten sequences of either of the following forms:
\begin{displaymath}\xymatrix@R=2ex{ 0\ar[r] & M_{\{a\}} \ar[r] & B\ar[r] & C\ar[r] & 0 \\ 0 \ar[r] & A \ar[r] & B\ar[r] & M_{\{a\}} \ar[r] & 0.}\end{displaymath}
\end{proposition}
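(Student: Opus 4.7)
The plan is to obtain a contradiction from an infinite family of morphisms by exploiting that a candidate middle term $B\in\repAR$ is a finite direct sum of interval indecomposables (Lemma~\ref{lem:krull-schmidt}): the family will contain a member that no such $B$ can accommodate.

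Since $a$ is neither a sink nor a source, there are consecutive elements $s_n<a<s_{n+1}$ of $\bar S$ on whose open segment $\preceq$ coincides with either $\leq$ or $\geq$; I treat the former, the latter being symmetric by reflection (equivalently by duality via Definition~\ref{def:dualdef}). A routine commuting-square check from Definition~\ref{def:morphism} yields two local Hom statements that I will need. \emph{(Hom1):} a nonzero morphism $M_{\{a\}}\to M_I$ forces $a\in I$ and $I$ to contain no $y\prec a$; locally this means $I=\{a\}$ or $I=[a,d|$ for some $d>a$. Dually, a nonzero $M_I\to M_{\{a\}}$ forces $I=\{a\}$ or $I=|c,a]$ with $c<a$. \emph{(Hom2):} for each $c\in(a,s_{n+1})$, a nonzero morphism $M_I\to M_{[a,c]}$ with $a\in I$ forces $I\subseteq[a,c]$; intervals extending past $s_{n+1}$ are eliminated by the obstruction at the square $(x,y)=(s_{n+1},a)$, where $M_{[a,c]}(s_{n+1},a)=0$ while $M_I(s_{n+1},a)=\mathrm{id}$, which forces the morphism to vanish at $a$ and hence everywhere. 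The analogous statements hold for morphisms $M_{[c,a]}\to M_I$.

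Now suppose $0\to M_{\{a\}}\xrightarrow{f}B\xrightarrow{g}C\to 0$ is a left AR sequence and write $B=\bigoplus_{j=1}^k M_{I_j}$ with components $f=(f^j)_j$. If some nonzero $f^j$ has $I_j=\{a\}$, then $f^j$ is an isomorphism (Lemma~\ref{lem:V is schurian}), so projection onto that summand is a retraction of $f$, contradicting the definition of an AR sequence; hence every nonzero $f^j$ has $I_j=[a,d_j|$ with $d_j>a$, and I set $d^*:=\min\{d_j:f^j\ne 0\}>a$. For any $c\in(a,d^*)$, the canonical map $h_c:M_{\{a\}}\to M_{[a,c]}$ is nonzero and not a split monomorphism (since $M_{[a,c]}$ is indecomposable and not isomorphic to $M_{\{a\}}$), so the AR property yields $\bar h_c:B\to M_{[a,c]}$ with $\bar h_c\circ f=h_c$. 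By \emph{(Hom2)} each $\bar h_c^j$ with $f^j\ne 0$ vanishes (its right endpoint $d_j\ge d^*>c$), while on the $\{a\}$-summands $f^j=0$; summing gives $\bar h_c\circ f=0\ne h_c$, a contradiction. The right AR sequence $0\to A\to B\to M_{\{a\}}\to 0$ is treated identically using the dual family $h_c':M_{[c,a]}\to M_{\{a\}}$ for $c\in(s_n,a)$ together with the dual halves of \emph{(Hom1)} and \emph{(Hom2)}.

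The main obstacle is the Hom classification \emph{(Hom2)}, in particular the verification that intervals $I_j$ extending past $s_{n+1}$ (or through further sinks/sources) contribute nothing to morphisms into $M_{[a,c]}$; once the source-obstruction at $s_{n+1}$ is established, the rest is pigeonhole: the family $\{h_c\}_{c\in(a,s_{n+1})}$ is parametrised by a continuum, whereas any $B\in\repAR$ supplies only finitely many right endpoints $d_j$, so any $c<d^*$ defeats every possible factorisation.
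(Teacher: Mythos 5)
Your proof is correct and takes essentially the same approach as the paper's: defeat any candidate middle term $B\in\repAR$ by exploiting the continuum of interval modules $M_{[a,c]}$ (resp.\ $M_{[c,a]}$) shrinking toward $\{a\}$ against the finiteness of $B$'s interval decomposition. You are more explicit than the paper in articulating the local Hom-vanishing facts and the pigeonhole on the minimal endpoint $d^*$, where the paper's argument that "all nontrivial maps $M_I\to M_{\{a\}}$ factor through every $M_{[x,a]}$" leaves the final contradiction implicit.
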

\begin{proof}
Suppose $s_{2n}<a<s_{2n+1}$, where $s_{2n}$ is a sink and $s_{2n+1}$ is a source.
The other case is similar. %[
For any indecomposable $M_I$, if $\Hom(M_I,M_{\{a\}})\cong k$ then $I=|c,a]$.
We note that, for each $x\in(s_0,a)$, $\Hom(M_{[x,a]},M_{\{a\}})\cong k$ for all $i\geq 0$.

Let $M_I$ be some indecomposable such that $\Hom(M_I,M_{\{a\}})\cong k$.
For any $x\in(s_0,a)$ such that $c<x$ we have $\Hom(M_I,M_{J_x})\cong k$.
Since the Hom space between any two indecomposables is either $k$ or 0 (Proposition \ref{prop:homiskor0}), all nontrivial maps $M_I\to M_{\{a\}}$ factor through every indecomposable $M_{[x,a]}$ for $x\in (s_0,a)$ and $x>c$.
Thus, it is not possible to have an Auslander--Reiten sequence in $\repAR$ of the form $0\to A\to B\to M_{\{a\}}\to 0$.
By a dual argument, the other form is not possible, either.
\end{proof}

\section{Other Papers in this Series}
In Continuous Quivers of Type $A$ (II), the second author
defines a continuous analog of the Auslander--Reiten quiver, called the Auslander--Reiten Space, for both $\repAR$ and its bounded derived category $\mathcal D^b(\repAR)$ \cite{Rock1}.
They show that the Auslander--Reiten space exhibits many of the same properties as an Auslander--Reiten quiver, such as how to find extensions of indecomposables and Auslander--Reiten sequences.
Similar results are shown about the derived category.

The authors define the new continuous cluster category in Continuous Quivers of Type $A$ (III) and generalize cluster structures to cluster theories \cite{IgusaRockTodorov2}.
In particular, they define the $\mathbf{E}$-cluster theory.
They then show many existing type $A$ cluster structures are cluster theories and embed into this new theory in a way that preserves mutation.

In Continuous Quivers of Type $A$ (IV), the second author generalizes mutation to continuous mutation \cite{Rock2}, further generalizing transfinite mutation in \cite{BaurGratz2018}.
The embeddings from Part (III) are shown to be part of a chain of embeddings and the notion of an abstract cluster structure is introduced in order to understand which cluster theories are more strongly related.
Part (IV) concludes with a geometric model of $\mathbf E$-clusters which generalizes the triangulations of polygons and laminations of hyperbolic plane in \cite{CCS} and \cite{IgusaTodorov2015}, respectively.

%Parts (III) and (IV) will cover more results on the derived category as well as the goals of the project: continuous versions of clusters and mutation.
%We prove which which continuous type $A$ quivers are derived-equivalent.
%There will also be an orbit category based on the definitions in \cite{BMRRT}, where cluster categories were defined as orbit categories, and in \cite{IgusaTodorov2015}, where the first and last author introduced the notion of a continuous cluster category.
%From there we will define continuous clusters, a generalization of clusters from \cite{IgusaTodorov2015}.
%We will also show there is an association between continuous clusters and laminations of the hyperbolic plane.
%This extends the previous association to discrete laminations. 
%Finally, we will define a continuous generalization of mutation that, when restricted to discrete or even transfinite mutation (introduced in \cite{BaurGratz2018}) behaves as before.
%
%The generalization of the discrete type $A$ setting to a continuous setting allows for the embedding of any discrete structure into the continuous structure.
%The authors hope that the ability to work in the embedded model, as opposed to the discrete model in isolation, may lead to new techniques and results about known structures and new connections to existing areas of mathematics not yet known.


\begin{thebibliography}{9}
%\bibitem[ASS]{bluebook}
%I.~Assem, D.~Simson, A.~Skowro\'nski, \underline{Elements of the Representation Theory of Associative Algebras 1:} \underline{Techniques of Representation Theory}, Cambridge University Press. 2006.
\bibitem{PersistentDimension1}
H.~Adams, M.~Aminian, E.~Farnell, M\ Kirby, J.~Mirth, R.~Neville, C.~Peterson, P.~Shipman, and C.~Shonkwiler. \emph{A fractal dimension for measures via persistent homology}, to appear in Abel Symposia 2019, preprint:\ \href{https://arxiv.org/pdf/1808.01079.pdf}{https://arxiv.org/pdf/1808.01079.pdf}
\bibitem{ARSequences} 
M.~Auslander and I.~Reiten, \emph{Representation theory of Artin algebras. III. Almost split sequences}, Communications in Algebra \textbf{3} (1975),  no.~3, 239--294, \href{http://doi.org/10.1080/00927877508822046}{DOI: 10.1080/00927877508822046}
\bibitem{BaurGratz2018}
K.~Baur and S.~Gratz, \emph{Transfinite mutations in the completed infinity-gon}, Journal of Combinatorial Series A \textbf{155} (2018), 321--359, \href{https://doi.org/10.1016/j.jcta.2017.11.011}{DOI: 10.1016/j.jcta.2017.11.011}
\bibitem{Botnan}
M.B.~Botnan, \emph{Interval Decomposition of Infinite Zigzag Persistence Modules}, Proceedings of the American Mathematical Society \textbf{145} (2017), no.~8, 3571--3577, \href{http://doi.org/10.1090/proc/13465}{DOI: 10.1090/proc/13465}
\bibitem{BotnanCrawley-Boevey}
M.B.~Botnan and W.~Crawley-Boevey, \emph{Decomposition of persistence modules}, to appear in Proceedings of the American Mathematical Society, preprint:\ \href{https://arxiv.org/pdf/1811.08946.pdf}{https://arxiv.org/pdf/1811.08946.pdf}
\bibitem{BMRRT}
A.~Buan, R.~Marsh, M.~Reineke, I.~Reiten, and G.~Todorov, \emph{Tilting theory and cluster combinatorics}, Advances in Mathematics \textbf{204} (2006), no.~2, 572--618, \href{https://doi.org/10.1016/j.aim.2005.06.003}{DOI: 10.1016/j.aim.2005.06.003}
\bibitem{CCS}
P.~Caldero, F.~Chapoton, and R.~Schiffler, \emph{Quivers with Relations Arising From Clusters ($A_n$ Case)}, Transactions of the American Mathematical Society, Volume 358, Number 3, 1347 -- 1364
%\bibitem[\c{C}F]{CanakciFelikson2017}
%{\.I}.~\c{C}anak\c{c}i and A. Felikson, \emph{Infinite Rank Surface Cluster Algebras}, Advances in Mathematics, 352 (2019), 862--942
\bibitem{CarlssondeSilvaMorozov2009}
G.~Carlsson, V.~de Silva, and D.~Morozov, \emph{Zigzag persistent homology and real-valued functions}, Proceedings of the twenty-fifth annual symposium on computational geometry, 2009, 247--256, \href{https://doi.org/10.1145/1542362.1542408}{DOI: 10.1145/1542362.1542408}
\bibitem{CIdSZ}
G.~Carlsson, T.~Ishkhanov, V.~de Silva, and A.~Zomorodian, \emph{On the local behavior of spaces of natural images}, International Journal of Computer Vision \textbf{76} 2008, no.~1, 1--12, \href{https://doi.org/10.1007/s11263-007-0056-x}{DOI: 10.1007/s11263-007-0056-x}
\bibitem{3Dshapes}
F.~Chazal, D.~Cohen-Steiner, L.~J.~Guibas, F.~M\'emoli, and S.~Y.~Oudot. \emph{Gromov-Hausdorff Stable Signatures for Shapes using Persistence}, Computer Graphics Forum, \textbf{28} (2009), no.~5, 1393--1403, \href{http://doi.org/10.1111/j.1467-8659.2009.01516.x}{DOI: 10.1111/j.1467-8659.2009.01516.x}
\bibitem{CO}
F.~Chazal and S.~Oudot, \emph{Towards persistence-based reconstruction in Euclidean spaces}, In Proceedings of the 24th Annual Symposium on Computational Geometry (2008),  232--241, \href{https://doi.org/10.1145/1377676.1377719}{DOI: 10.1145/1377676.1377719}
\bibitem{Crawley-Boevey2015}
W.~Crawley-Boevey, \emph{Decomposition of pointwise finite-dimensional persistence modules}, Journal of Algebra and its Applications \textbf{14}  (2015), no.~5, \href{http://doi.org/10.1142/S0219498815500668}{DOI: 10.1142/S0219498815500668}
\bibitem{plantroots}
H.~Edelsbrunner and J.~L.~Harer, \underline{Computational topology: an Introduction}, American Mathematical Society, 2010
\bibitem{HansonRock}
E.~J.~Hanson and J.~D.~Rock, \textit{Decomposition of Pointwise Finite-Dimensional $\mathbb S^1$ Persistence Modules}, arXiv:2006.13793 [math.RT] (2020), \href{https://arxiv.org/pdf/2006.13793.pdf}{https://arXiv.org/pdf/arXiv:2006.13793} 
\bibitem{ClusterAlgebrasI}
S.~Fomin and A.~Zelevinksy, \textit{Cluter algebras I: Foundations}, Journal of the American Mathematical Society \textbf{15} (2002), no.~2, 497--529, \href{https://doi.org/10.1090/S0894-0347-01-00385-X}{DOI:10.1090/S0894-0347-01-00385-X}
\bibitem{Gabriel1972}
P.~Gabriel, \textit{Unzerlegbare Darstellungen. I},  %(German, with English summary),
Manuscripta Mathematica \textbf{6} (1972), 71--103, \href{http://doi.org/10.1007/BF01298413}{DOI:10.1007/BF01298413} %; correction, ibid. 6 (1972), 309.
\bibitem{GabrielRoiter}
P.~Gabriel and A.~V.~Ro\u{\i}ter, \underline{Representations of Finite-Dimensional Algebras}, Springer-Verlag, Berlin Heidelberg, 1997
\bibitem{IgusaRockTodorov2}
K.~Igusa, J.~D.~Rock, and G.~Todorov, \textit{Continuous Quivers of Type A (III) Embeddings of Cluster Theories}, arXiv:2004.10740 [math.RT] (2020), \href{http://arXiv.org/pdf/2004.10740}{https://arXiv.org/pdf/2004.10740}
\bibitem{IgusaTodorov2015}
K.~Igusa and G.~Todorov, \emph{Continuous Cluster Categories I}, Algebras and Representation Theory \textbf{18} (2015), no.~1, 65--101, \href{https://doi.org/10.1007/s10468-014-9481-z}{DOI: 10.1007/s10468-014-9481-z}
\bibitem{PersistentDimension3}
J.~Jaquette and B.~Schweinhart, \emph{Fractal Dimension Estimation with Persistent Homology: A Comparative Study}, arXiv:1907.11182v2 [math.DS] (2019), \href{https://arxiv.org/pdf/1907.11182v2.pdf}{https://arxiv.org/pdf/1907.11182v2.pdf}
%\bibitem[JY]{JorgensenYakimov2017}
%P.~J{\o}rgensen and M.~Yakimov, \emph{C-vectors of 2-Calabi-Yau cateogories and borel subalgebras of $\mathfrak{sl}_{\infty}$}, 2017, \texttt{arXiv:1710.06492}
%\bibitem[K]{Keller2005}
%B. Keller, \emph{On triangulated orbit categories}, Doc. Math. 10 (2005), 551 -- 581.
%\bibitem[KR]{KellerReiten2007}
%B.~Keller and I.~Reiten, \emph{Cluster-tilted algebras are Gorenstein and stably Calabi-Yau}, Advances in Mathematics, 211 (2007) 123 -- 151
%\bibitem[N]{Neeman2001}
%A.~Neeman, \underline{Triangulated Categories}, (AM-148) Princeton University Press. 2001.
\bibitem{breastcancer}
M.~Nicolau, A.~J.~Levine, and G.~Carlsson, \emph{Topology based data analysis identifies a subgroup of breast cancers with a unique mutational profile and excellent survival}, Proceedings of the National Academy of Sciences \textbf{108} (2011), no.~17, 7265--7270, \href{http://doi.org/10.1073/pnas.1102826108}{DOI: 10.1073/pnas.1102826108}
%\bibitem[RVB]{ReitenVandenBergh2002}
%I.~Reiten and M. Van den Bergh, \emph{Noetherian Hereditary Abelian Categories Satisfying Serre Duality}, Journal of the American Mathematical Society, 15 (2) 295 -- 366
%\bibitem{SalaSchiffmann2018}
%F.~Sala and O.~Schiffmann, \emph{The circle quantum group and the infinite root stack of a curve}, arXiv:1711.07391v3 [math.RT] (2018), \href{https://arxiv.org/pdf/1711.07391.pdf}{https://arxiv.org/pdf/1711.07391.pdf}
\bibitem{Rock1}
J.~D.~Rock, \textit{Continuous Quivers of Type A (II) The Auslander-Reiten Space}, arXiv:1910.04140v1 [math.RT] (2019), 
\href{https://arXiv.org/pdf/1910.04140.pdf}{https://arXiv.org/pdf/1910.04140.pdf}
\bibitem{Rock2}
\underline{\qquad}, \textit{Continuous Quivers of Type A (IV) Continuous Mutation and Geometric Models of $\mathbf E$-clusters}, arXiv:2004.11341 [math.RT] (2020), \href{https://arXiv.org/pdf/2004.11341}{https://arXiv.org/pdf/2004.11341}
\bibitem{SalaSchiffmann2019}
F.~Sala and O.~Schiffmann, \emph{Fock space representation of the circle quantum group}, arXiv:1903.02813v1 [math.QA] (2019), \href{https://arxiv.org/pdf/1903.02813.pdf}{https://arxiv.org/pdf/1903.02813.pdf}
\bibitem{PersistentDimension2}
B.~Schweinhart, \emph{Fractal Dimension and the Persistent Homology of Random Geometric Complexes}, arXiv:1808.02196v5 [math.PR], \href{https://arxiv.org/pdf/1808.02196.pdf}{https://arxiv.org/pdf/1808.02196.pdf}
\bibitem{ZomorodianCarlsson}
A.~Zomorodian and G.~Carlsson, \emph{Computing Persistent Homology}, Discrete and Computational Geometry \textbf{33} (2005), no.~2, 249--274, \href{https://doi.org/10.1007/s00454-004-1146-y}{DOI: 10.1007/s00454-004-1146-y}
\end{thebibliography}
\end{document}